 \def\al{\alpha}
 \def\be{\beta}
 \def\de{\delta}
 \def\eps{\varepsilon}
 \def\ga{\gamma}
  \def\G{\Gamma}
 \def\GG{\mathcal G}
 \def\la{\lambda}
 \def\si{\sigma}
 \def\om{\omega}
 \def\d{\mathrm d }
 \def\EE{{\mathbf E}}
 \def\VV{{\mathbf V}}
 \def\EN{{\mathcal E}}
 \def\ie{{\it i.e.}, }
  \def\G{\Gamma}
 \def\R{{\mathbb R}}
 \def\Z{{\mathbb Z}}
  \def\A{{\mathcal  A}}
 \def\LL{{\mathcal  L}}
 \def\HH{{\mathcal  H}}
 \def\MM{{\mathcal M}}
 \def\ov{\overline}
\def\Cf {\mathfrak C}
 \def\oo{\mathrm o}
 \def\tt{{\mathrm t}}
 \def\bM {\mathbb M}
\def \txt{\qquad\hbox}
 \DeclareMathOperator{\<}{\langle}
 \DeclareMathOperator{\ra}{\rangle}
 \DeclareMathOperator{\supp}{supp}
  \renewcommand{\proofname}{{\bf Proof:}}
 \theoremstyle{plain}
 \newtheorem{Thm}{Theorem}[section]
 \newtheorem{Lemma}[Thm]{\bf Lemma}
 \newtheorem{Corollary}[Thm]{\bf Corollary}
 \newtheorem{Theorem}[Thm]{\bf Theorem}
 \newtheorem{Proposition}[Thm]{\bf Proposition}
 \theoremstyle{definition}
 \newtheorem{Definition}[Thm]{\bf Definition}
 \theoremstyle{remark}
 \newtheorem{Remark}[Thm]{\bf Remark}
 \newtheoremstyle{Cl}
  {5pt}
  {3pt}
  {\sl}
  {}
  {\it}
  {:}
  {.5em}
  {}
 \theoremstyle{Cl}
 \def\begincproof{
                  \renewcommand{\proofname}{\it Proof:}
                  \begin{proof}
                 }
 \def\endcproof{
                \renewcommand{\qedsymbol}{$\diamondsuit$}
                \end{proof}
                \renewcommand{\qedsymbol}{\openbox}
                \renewcommand{\proofname}{\bf Proof:}
               }
 \renewcommand{\proofname}{{\bf Proof:}}
\title[Aubry--Mather]
{Aubry--Mather theory on graphs}
\author{Antonio Siconolfi}
\address{Dipartimento di Matematica, Sapienza Universit\`a  di Roma, Italy.}
\email{siconolfi@mat.uniroma1.it}
\author{Alfonso Sorrentino}
\address{Dipartimento di Matematica, Universit\`a degli Studi di Roma ``Tor Vergata'', Rome, Italy.}
\email{sorrentino@mat.uniroma2.it}
\subjclass[2010]{}
\keywords{}
\begin{document}
\maketitle

 \begin{abstract} We formulate  Aubry--Mather theory for Hamiltonians/Lagrangians defined on graphs and discuss its relationship with weak KAM theory developed in \cite{SiconolfiSorrentino}.
\end{abstract}

\section{Introduction}

The Hamilton Jacobi equation and its wide spectrum of applications represent an important crossroads for ideas and techniques coming from different areas of mathematics: PDEs, calculus of variations, control theory, optimal transport,  symplectic geometry, etc...
Over recent years, this beneficial synergy has resulted in the development of novel lines of research and significant scientific advances.\\

In 1990's this multifaceted interaction has experience a particular boost thanks to a remarkable intuition by Albert Fathi and the ensuing development of what is nowadays called {\it weak KAM theory} \cite{Fathi, FathiICM}. This novel point of view shed light on the noteworthy connection between the very successful action-minimizing methods in Hamiltonian dynamics
-- in particular {\it Aubry-Mather theory}, originated from the work by Serge Aubry \cite{Aubry}  and John Mather \cite{Math82, Mather91} --, and the analysis of viscosity solutions and sub-solutions to the Hamilton-Jacobi equation.

Not only these ideas were particularly beneficial for enhancing our understanding of both the dynamics of these systems and the global properties of the solutions, but they also contributed to draw unexpected connections with other problems in dynamics, geometry and analysis (see for instance \cite{FathiICM, Siburg, Sorrentinobook}). \\

These successes  encouraged a very active investigation on the possibility of extending these theories beyond the classical settings, either to other classes of systems or to  different ambient spaces, more suitable for some applications.

 In this article, it is pursued the latter direction.\\

The paper presents the first, as far as we know, systematic detailed account of Aubry--Mather theory for Hamiltonians/Lagrangians defined on {\it graphs} recovering the whole theory in this new context and relating it to  weak KAM analysis carried out in \cite{SiconolfiSorrentino}.

\subsection*{Motivations and Significance}

Over the last years there has been an increasing interest in the
study of the Hamilton-Jacobi equation on {\it graphs} and {\it networks} and related
questions. These problems, in fact, besides having a great impact in the applications in
various fields (for example to data transmission, traffic management
problems, etc...), they  involve a number of subtle
theoretical issues related to the intertwining between the local analysis of the problem and the global structure of the network/graph.\\

Several reasons can be advanced for  embarking  on the job of settling Aubry-Mather theory in this setting. \\

On the one hand, the need of following up the line of investigation initiated in \cite{SiconolfiSorrentino}, where we provided a thorough discussion of KAM theory on graphs/networks (see also \cite{PoSi}).
Weak KAM  and Aubry-Mather theories, in fact,  are in a duality relationship: they are both intrinsically based on the study of objects that arise as  minimizers of some {\it action functionals}, possibly with constraints (see for instance \cite{FathiICM, Siburg, Sorrentinobook} for more details).\\

 On the other hand, the passage from manifolds to graphs requires a specific adaptation of the main tools and techniques involved ({\it e.g.}, parametrized paths, spaces of  probability measures, occupation and closed probability measures, etc...) which is by no means straightforward  and, we believe, would be of potential interest for many other problems and applications.\\

As a matter of fact, one of the initial motivation for this work,  was to prove a  homogenization result for  the Hamilton--Jacobi equations on  networks, following  the homological approach introduced in \cite{CIS}.  However, in order to pursue the project, it became crucial to first develop an Aubry-Mather theory in this context, both for determining the limit problem and the space on which it is defined.
For instance,  the  effective Hamiltonian    appearing, as in the compact manifold case, in the limit equation is nothing else that  the {\it minimal average action} (or  {\it Mather's $\alpha$ function}, see subsection \ref{sec5.2}), namely the value function related to one of the variational problem at the core of Aubry Mather theory.

 Note that, even if the approximated equations in the homogenization problem are  posed on a network, the natural setting where the approximation procedure should take place is the corresponding abstract graph. This is one of the reasons why the present contribution is focused on this issue.\\

To avoid misunderstanding, we make it clear that the topic we are talking about -- that has not  been treated in the literature yet --, is different from  other interesting models  of partial homogenization on junctures considered in  \cite{FS, GIM},  and mainly devoted to applications to traffic theory. \\

We remark that a central role in our construction   is played by  probability measures, defined on a sort of tangent bundle of the graph: they constitute the relaxed framework for the variational problems under consideration. There is  a broad interest in the  recent literature on probability measures  supported on graphs/networks, see for instance \cite{CAMI2, Maz}. One of the goal being, for instance,  to extend   mean field games models to graphs (see  \cite{Achdou, CAMI, Gangbo, GU}).  Passing to a related field, connections between Aubry--Mather theory  and optimal transport have been pointed out by various author, see \cite{BB1, BB2, Bernard}. \\
The outputs of the present paper can be seen as  a first step to explore these  directions of research in the graph setting.

\subsection*{Main contributions}
In this paper we prove that   Aubry--Mather theory can be completed established in the graphs context. In particular, we discuss: the role of  occupation and closed probability measures, existence of action minimizing measures, properties of the corresponding  value functions (Mather's $\alpha$ and $\beta$ functions), properties of Mather measures, structure of Mather sets, Mather graph property, etc....
\\

We recover the duality links with weak KAM theory as well.  Weak KAM theory, as formulated in \cite{SiconolfiSorrentino}, is essentially a metric theory  based on the notion of   intrinsic length of paths, which in turn depends on a given level of the Hamiltonian; we do not even need a Lagrangian function to be defined. The sign of the intrinsic length of cycles is related to the existence of subsolutions   to the corresponding Hamilton-Jacobi equation. The existence of cycles with vanishing intrinsic length is attained  at the critical value of the Hamiltonian, which is the unique value for which one finds solutions to the Hamilton--Jacobi equations. The edges forming cycles with vanishing length make up the so--called {\it Aubry set}.\\

 Aubry-Mather theory is  instead a  variational theory, inspired by the  principle of least action \cite{Mather91, Siburg, Sorrentinobook},  whose aim is to find, in suitable spaces, minimizers of the {\it Lagrangian action functional} (possible with constraints). \
 Our  starting point is to introduce the notion of parametrization of a  path (subsection \ref{paths}),  which  is in duality with that of intrinsic length. It  is obtained  by  equipping every edge of a path  with a non-negative weight, that can be interpreted as an average speed. This allows us to introduce  an action functional on the set of paths,  setting up the variational problem of interest. \\

  The usual relaxation procedure yields to pose the problem in a suitable  space of measures, where all the minimizers can be found. To this aim we define, on an appropriate tangent bundle of the graph, the notion of closed occupation measures, which somehow correspond to parametrized cycles, and prove that they are dense, with respect to the first Wasserstein metric, in the space of all closed probability measures (Appendix \ref{B}), which  we believe has its  own interest beyond the problem at hand. Significantly, the measures minimizing the action are supported by cycles with vanishing intrinsic length and the minimum of the action is equal, up to a sign, to the critical value (see Theorem \ref{kamalaa}).

\subsection{Organization of the article}
We describe hereafter  how the article  is organized. \\

In {\bf Section \ref{networks}}  we provide a brief introduction to graph theory,  in order to set the terminology and introduce the main concepts  that will be needed. In particular,  we define the algebraic topological notions of chains, cochains, homology and cohomology of the graph, that are crucial importance for the full implementation of the variational analysis.  \\

In {\bf Section \ref{sec3}} we give the notion of Hamiltonian on a graph and introduce the associated Lagrangian  which allows us to define the action functional to be minimized under appropriate constraints.\\

In {\bf Section \ref{misura}}  we define the relaxed setting on which the variational analysis will occur.
Then, we introduce the notion of {\it occupation measures} and {\it closed probability measures}, and the relevant Wasserstein topology. These are central objects in Aubry-Mather theory
that represent  useful relaxations of  the notion of paths and closed paths.\\

{\bf Sections \ref{secMather} \& \ref{sec6}} are the core of the development of Aubry-Mather theory in the context of  graphs. We set, in analogy to the classical setting, a family of variational problems, show that they admit global minimizers and discuss their significance and their structural properties.  Interestingly,  we prove in this context the analogue of the celebrated  Mather's graph theorem (Proposition \ref{mathergraph2} and Corollary \ref{ironic}). \\

After having recalled in {\bf Section \ref{sec7}} the basic results of weak KAM theory from \cite{SiconolfiSorrentino}, in {\bf Section \ref{WKAM}}
we discuss the relation between  Aubry Mather theory and weak KAM theory on graphs.  We show the equality between the (projected) Mather sets and the corresponding Aubry sets, and in Theorem \ref{proHJomega} we use viscosity solutions and subsolutions to provide a more explicit description of Mather's graph theorem.\\


Finally, in {\bf Appendix \ref{netto}} we describe how to develop  an Aubry--Mather theory on networks, and  look from the point of view of networks to some notions we have introduced  on graphs. In {\bf  Appendix \ref{B}} we provide the proof of the density result of closed occupation measures.

\bigskip

\subsection*{Acknowledgments}
The second author
acknowledges the support of
the University of Rome Tor Vergata's {\it Beyond Borders} grant  ``{\it The Hamilton-Jacobi Equation: at the crossroads of Analysis, Dynamics and Geometry}'' (CUP: E84I19002220005) and the
Italian Ministry of Education and Research (MIUR)'s grants:
PRIN Project  ``{\it Regular and stochastic behavior in
dynamical systems}'' (CUP: 2017S35EHN)  and  the Department of Excellence grant  2018-2022 awarded to the Department of Mathematics of University of Rome Tor Vergata   (CUP: E83C18000100006).

Finally, both authors with to express their gratitude to the  Mathematical Sciences Research Institute in Berkeley (USA) for its kind hospitality in Fall 2018 during the trimester program ``{\it Hamiltonian systems, from topology to applications through analysis}'',
where part of this project was carried out.

\bigskip

\section{Prerequisites on graphs}\label{networks}

\subsection{Definition and terminology} A  graph $\Gamma=(\VV ,\EE )$ is an
ordered pair of disjoint  sets $\VV$ and $\EE$,  which are  called,
respectively, {\it vertices} and (directed) {\it edges}, plus two
functions:
$$\oo: \EE \longrightarrow \VV $$
which associates to each  edge its {\it origin} (initial
vertex), and
\begin{eqnarray*}
 - {\phantom{o}}: \EE &\longrightarrow& \EE \\
e &\longmapsto& - e,
\end{eqnarray*}
which  changes  direction and  is a
fixed point free involution, namely
\[- e \neq e \txt{and} \qquad -(-e)=e \txt{for any $e \in \EE$.}\]
We  define the {\em terminal vertex} of $e$ as
\[\tt  (e):= \oo ( - e).\]
We further denote by $|\VV|$, $|\EE|$, the number   of vertices and edges, respectively.  For any vertex $x\in\VV$, we denote by
$$
\EE_x :=\{ e\in \EE:\; \oo(e)=x \}
$$
the set of edges originating from $x$; this is sometimes called the {\it star centered at $x$}.\\

An {\it orientation} of $\Gamma$ is a subset $ \EE^+$ of the edges satisfying
$$ -\EE^+ \cap \EE^+ = \emptyset \qquad {\rm and}\qquad
-\EE^+  \cup \EE^+ = \EE.$$
In other words, an orientation of $\Gamma$ consists of a choice of exactly one edge in each pair $\{e,-e\}$.\\

We define a {\it path}  $\xi:=(e_1, \cdots, e_M) = (e_i)_{i=1}^M$ as a finite sequence
of concatenated  edges in $\EE$, namely
$\tt  (e_j)=\oo (e_{j+1})$ for any $j= 1, \cdots, M-1$.\\
We  define the {\em length of a path} as the number of its  edges. We set $\oo (\xi):= \oo (e_1)$, $\tt  (\xi):= \tt  (e_M)$.
We call a path {\it closed}, or a {\it cycle}, if $\oo (\xi)= \tt
(\xi)$. \\

Throughout the paper, we assume $\G$ to be

\begin{itemize}
  \item[{\bf (G1)}] { \em finite}, namely with $|\EE|$, $|\VV|$ finite;
  \item[{\bf (G2)}] {\em connected}, in the sense that any two vertices are linked by some path;
  \item [{\bf (G3)}] {\em without loops}, namely for any $e \in \EE$ $\oo(e) \neq \tt(e)$.
\end{itemize}

The first two assumptions are structural, while the last one  could be removed at the price of introducing further details in the development of the theory. For the sake of clarity of this presentation, we prefer to avoid it in the present paper.\\

It follows from the connectedness assumption, that the functions $\oo$ and $\tt$ are surjective. \\

We call  {\it simple} a path without repetition of vertices,  except
possibly the initial and terminal vertex, in other terms
$\xi=(e_i)_{i=1}^M$ is simple if
\[\tt(e_i) = \tt(e_j) \, \Rightarrow i=j.\]
Clearly, there are finitely many simple paths  in  a finite graph.
We call {\em circuit} a simple closed path.
{Given any edge $e$, we call {\em equilibrium circuit} (based on $e$) the path $(e,-e)$}.

\medskip

\subsection{Homology of a graph}

Throughout the paper we will  take homology and cohomology with coefficients in $\R$. We refer to \cite[Ch. 4]{Sunada} for a more detailed and general presentation.\\
We define the {\it$0$--chain} group as the free Abelian group on the vertices with coefficients in  $\R$.
We denote it by
$\Cf_0( \G, \R)$.   We have
\[\ \Cf_0( \Gamma,\R) \sim \R^{|\VV|}.\]

We do the same operation with edges,  making   the  reversed edge $-e$ coincide with the opposite of $e$ with respect to the group operation, and we obtain the {\it $1$--chain group}, denoted  by  $\Cf_1(
\G, \R)$.  A basis is given by any orientation  $\EE^+$, in the sense the any element of the $1$--chain group can be uniquely expressed as a linear combination of  elements in $\EE^+$ with  real  coefficients.  We consequently have
\[ \Cf_1( \G,\R) \sim \R^{|\EE|/2}.\]
We define the {\it boundary operator} $\partial: \Cf_1( \G,\R) \to \Cf_0( \G,\R)$  by setting for any edge
\[\partial e:= \tt  (e)- \oo (e)\]
and then extending it linearly; clearly, $\partial\, (-e)= -\partial e$.\\

The ({\it first}) {\it Homology group} of $\G$ with coefficients
in $\R$ is defined by
$$H_1(\G,\R):= {\mathrm Ker}\,\partial,$$
Some remarks:
\begin{itemize}
  \item[--] $H_1(\G,\R)$ is a subgroup of $\Cf(\G,\R)$.
  \item[--] $H_1(\G, \R)$ is a free Abelian group of finite rank. The ({\it first}) {\it Betti number}  is defined to be the rank of $H_1(\G, \R)$, it is an indicator of the topological complexity of the network.
  \item[--] An element of $H_1(\G,\R)$ is called a {\it $1$--cycle}. In particular a $1$--chain $\sum_{e \in \EE^+} a_e e$ is a $1$--cycle if and only if
      \begin{equation}\label{cincin}
       \sum_{e \in \EE^+,\, \tt(e)=x} a_e = \sum_{e \in \EE^+,\,\oo(e)=x} a_e \txt {for any $x \in \VV$;}
      \end{equation}
This can be considered as an analogue of {\it Kirchhoff law} for electric circuits.
\end{itemize}
  Due to \eqref{cincin}, we can associate to any closed path $\xi=(e_i)_{i=1}^M$  in $\Gamma$ an element of $H_1(\G,\R)$ via
  \begin{equation}\label{cincinbin}
   [\xi] := \sum_{i=1}^M e_i.
  \end{equation}
  We call $[\xi]$ the {\it homology class} of $\xi$. The converse is also true: every element of $H^1(\Gamma,\Z)$  can be represented  by a closed path  (see \cite[pp. 40--41]{Sunada}).
\medskip

\subsection{Cohomology of a graph}\label{subseccohom}

 Let us introduce the dual entities of chains. The {\it $0$--cochain
group}, denoted by  $\Cf^0( \Gamma, \R)$,  is  the
space of functions from $\VV$ to  $\R$,  and  the
{\it $1$--cochain group}, denoted by  $\Cf^1( \Gamma, \R)$, is the
space of functions $\eta: \EE \longrightarrow \R$, satisfying
the compatibility condition
\[\eta(- e)= - \eta(e) \txt{for any  $e \in \EE$.}\]
The algebraic structure of additive Abelian group is induced by the one in  $(\R,+)$.\\

We introduce the  {\it differential} or {\it coboundary operator}
$$
d: \Cf^0(\G, \R) \longrightarrow \Cf^1(\G, \R)
$$
 which is defined in the following way: for every $g \in \Cf^0(\G,\R)$, the $1$--cochain $dg$ is given via
$$dg(e):= g (\tt(e)) - g (\oo(e)) \txt{ for all  $e\in \EE$;}$$
it clearly satisfies the compatibility condition $dg(- e) = -dg (e)$.
\\

It is easy to check that $d$ is a group homomorphism. Hence,
the ({\it first}) {\it Cohomology group} of $\G$ with
coefficients in $\R$  can be defined as the quotient group
$$H^1(\G,\R):= \Cf^1(\G,\R) / {\mathrm {Im}}\,d.$$
One can show that there exists a canonical isomorphism
$$
H^1(\G,\R) \simeq {\rm Hom}\left( H_1(\G,\R),\R \right).\\
$$

\bigskip

\subsection{Pairings between chains and cochains, homology and cohomology} \label{subsecpairing}

Let us   introduce a
{\it pairing} between $0$--chains and $0$--cochains:
\begin{eqnarray*}
\langle \cdot, \cdot \rangle: \Cf^0(\G,\R) \times \Cf_0(\G,\R) &\longrightarrow& \R\\
\left (g, \sum_{x\in \VV} \al_x x \right ) &\longmapsto&   \sum_{x\in \VV} \al_x g(x).\\
\end{eqnarray*}

Similarly,
we can define the pairing  between $1$--chains and $1$--cochains (we adopt the same notation):
\begin{eqnarray*}
\langle \cdot, \cdot \rangle: \Cf^1(\G,\R) \times \Cf_1(\G,\R) &\longrightarrow& \R\\
\left (\eta, \sum_{e\in \EE} \al_e e \right ) &\longmapsto&   \sum_{e\in \EE} \al_e \eta(e).\\
\end{eqnarray*}

The above pairings allow us to relate differential and boundary operators. Let $g\in \Cf^0(\G,\R)$ and $\zeta= \sum_{e\in \EE} \al_e e \in \Cf_1(\G,\R)$; then we have:
 \begin{eqnarray}\label{pairing}
\langle dg ,  \zeta \rangle &=&
\sum_{e\in \EE} \al_e dg (e) =
\sum_{e\in \EE} \al_e \big(  g(\tt(e)) - g(\oo(e)) \big) \nonumber\\
&=& \sum_{e\in \EE} \al_e \langle  g , \partial e  \rangle  =
\langle g, \sum_{e\in \EE} \al_e e \rangle =
\langle g ,  \partial \zeta \rangle.
\end{eqnarray}

In particular, this means that whenever $\zeta\in \Cf_1(\G,\R)$ is such that $\partial \zeta =0$, then $\langle dg ,  \zeta \rangle =0$ for all $g\in \Cf^0(\G,\R)$.  Hence, the above pairing descends to a well-defined pairing between first homology and first cohomology groups, that we continue to denote
$\langle \cdot, \cdot \rangle: H^1(\G,\R) \times H_1(\G,\R) \longrightarrow \R.$

\bigskip
\section{Hamiltonians and Lagrangians on graphs} \label{sec3}

\subsection{Definitions and assumptions}  We call a {\it Hamiltonian} on the  graph $\Gamma= (\VV, \EE)$   a family of  functions
\[ \mathcal H(e,\cdot): \R  \to \R\]
labeled by the edges, such that
\begin{equation}\label{lag1}
 \HH(e, p)= \HH(-e,-p) \txt{for any $e \in \EE$, $p \in \R$.}
\end{equation}
We further require that, for any $e \in \EE$, $\HH(e,\cdot)$ is
\begin{itemize}
    \item[{\bf (H1)}]  {\em strictly convex} and {\em differentiable};
    \item[{\bf (H2)}] {\em superlinear}  at $\pm \infty$, namely
    \[ \lim_{p \to \pm \infty} \frac{\HH(e,p)}{|p|}= + \infty.\]
\end{itemize}

This implies that there exists, for any $e$, a unique $p_e=-p_{-e}$ global minimizer of both $\HH(e,\cdot)$  in $\R$. We consider in what follows $\HH(e,\cdot)$ mostly restricted to $[p_e,+ \infty)$, (resp. $\HH(-e,\cdot)$ restricted to $[p_{-e},+ \infty)$),  which is strictly increasing in  this  domain of definition.  We set
\begin{equation}
  a_e =\HH(e,p_e) = \HH(-e,p_{-e})=a_{-e} \label{ae}
\end{equation}
We define  $\si(e,\cdot)$ as the inverse function of $\HH(e,\cdot)$ in $[p_e,+ \infty)$. We have
\[\si(e,\cdot): [a_e, +\infty) \to [p_e,+\infty) \txt{for any $e \in \EE$}\]
and
\begin{equation}\label{cumpa}
 \si(e,a_e)= - \si(-e, a_e)=p_e =- p_{-e}   \txt{for any $e$}.
\end{equation}

The properties summarized in the next statement are immediate.

\begin{Lemma}\label{newbornbis} Let $e \in \EE$.  The function $a \mapsto \si(e,a)$  from
$[a_e,+\infty)$  to $\R$ is continuous, differentiable in $(a_e,+\infty)$,  and strictly increasing for any $e$. In addition, it is strictly concave and satisfies
\[ \lim_{a \to + \infty} \frac{\si(e,a)}a =0.\]
\end{Lemma}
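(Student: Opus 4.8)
The plan is to reduce every assertion to elementary facts about the inverse of a convex, increasing function, exploiting the duality between the claimed properties of $\si(e,\cdot)$ and the hypotheses {\bf (H1)}--{\bf (H2)} on $\HH(e,\cdot)$. First I would restrict $\HH(e,\cdot)$ to $[p_e,+\infty)$. Since $p_e$ is the unique minimizer and $\HH(e,\cdot)$ is strictly convex and differentiable, the derivative $\partial_p\HH(e,\cdot)$ is strictly increasing and vanishes at $p_e$, hence is strictly positive on $(p_e,+\infty)$; consequently $\HH(e,\cdot)$ is continuous and strictly increasing on $[p_e,+\infty)$, takes the value $a_e$ at $p_e$ and, by superlinearity {\bf (H2)}, tends to $+\infty$. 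Thus $\HH(e,\cdot)$ is a homeomorphism of $[p_e,+\infty)$ onto $[a_e,+\infty)$, and $\si(e,\cdot)$ is exactly its inverse. Continuity and strict monotonicity of $\si(e,\cdot)$ are then immediate from those of $\HH(e,\cdot)$.

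For differentiability I would invoke the inverse function theorem: on $(p_e,+\infty)$ we have $\partial_p\HH(e,\cdot)>0$, so $\si(e,\cdot)$ is differentiable on the image $(a_e,+\infty)$ with
\[\partial_a\si(e,a)=\frac{1}{\partial_p\HH\big(e,\si(e,a)\big)}.\]
(At $a_e$ the derivative of $\HH$ vanishes, so $\si(e,\cdot)$ has a vertical tangent there, which is why differentiability is asserted only on the open interval.) Strict concavity then follows from this formula together with the monotonicity established above: as $a$ increases, $\si(e,a)$ increases, hence $\partial_p\HH(e,\si(e,a))$ strictly increases, so $\partial_a\si(e,a)$ is strictly decreasing on $(a_e,+\infty)$. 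A function with strictly decreasing derivative is strictly concave, and by continuity at $a_e$ this extends to the closed half-line $[a_e,+\infty)$.

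Finally, for the asymptotic statement I would write $p=\si(e,a)$, so that $a=\HH(e,p)$ and, by strict monotonicity and surjectivity of $\si(e,\cdot)$, $p\to+\infty$ as $a\to+\infty$. Then
\[\frac{\si(e,a)}{a}=\frac{p}{\HH(e,p)}\longrightarrow 0 \txt{as $a\to+\infty$,}\]
the limit being precisely the reciprocal of the superlinearity ratio in {\bf (H2)} (for $p>0$ one has $|p|=p$). I do not expect any serious obstacle here: the content is standard once the restriction to $[p_e,+\infty)$ has been made. The only points requiring a little care are those concentrated at the endpoint $a_e$ -- one-sided continuity, the failure of differentiability caused by $\partial_p\HH(e,p_e)=0$, and the passage of strict concavity from the open interval to the closed one -- all of which are handled by the monotonicity and continuity recorded in the first step.
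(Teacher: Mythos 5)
Your proof is correct, and it takes the natural route: the paper itself offers no argument for this lemma (it is introduced with ``The properties summarized in the next statement are immediate''), and your write-up supplies precisely the standard facts the authors leave implicit --- $\si(e,\cdot)$ is the inverse of the continuous, strictly increasing, superlinear function $\HH(e,\cdot)|_{[p_e,+\infty)}$, the one-dimensional inverse function theorem gives $\partial_a\si(e,a)=1/\partial_p\HH(e,\si(e,a))>0$ on the open half-line, strict monotonicity of $\partial_p\HH$ (from strict convexity) makes this derivative strictly decreasing, hence $\si(e,\cdot)$ strictly concave, and the substitution $p=\si(e,a)$ turns the limit claim into the reciprocal of the superlinearity ratio in {\bf (H2)}. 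The only point worth stating a touch more carefully is the extension of strict concavity from $(a_e,+\infty)$ to $[a_e,+\infty)$: continuity gives non-strict concavity up to the endpoint, and equality in the concavity inequality at a point with left endpoint $a_e$ would force $\si(e,\cdot)$ to be affine on a segment, contradicting strict concavity in the interior.
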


\smallskip

We define the {\it Lagrangian} $\LL(e,\cdot): \R \to \R$ as the convex conjugate of $\HH(e,\cdot)$, namely
\[\LL(e,q):= \max_{p \in \R} \big (p \, q - \HH(e,p) \big ).\]

\begin{Proposition} \label{swanlake}Let $e \in \EE$. The function $q \mapsto \LL(e,q) $ is strictly convex and superlinear as $q$ goes to $\pm \infty$. In addition
\begin{equation}\label{lag2}
  \LL(e,q)= \LL(-e,-q) \txt{for any $q \in \R$.}
\end{equation}
\end{Proposition}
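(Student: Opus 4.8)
The plan is to derive all three assertions directly from the definition of $\LL(e,\cdot)$ as the convex conjugate of $\HH(e,\cdot)$, using the standing assumptions \textbf{(H1)} and \textbf{(H2)} together with the symmetry \eqref{lag1}. Since $\HH(e,\cdot)$ is strictly convex, differentiable, and superlinear, it is a proper function whose Legendre--Fenchel transform enjoys the classical duality properties; the task is to verify that these translate into the stated regularity, growth, and symmetry of $\LL$.

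First I would establish superlinearity of $\LL(e,\cdot)$. For fixed $q$, the defining supremum $\LL(e,q)=\max_{p}(pq-\HH(e,p))$ is attained because $\HH(e,\cdot)$ is superlinear, so $pq-\HH(e,p)\to-\infty$ as $|p|\to\infty$, making the objective coercive from above. To get superlinearity of $\LL$ itself, I would use the standard lower bound: for any fixed $p_0$, one has $\LL(e,q)\ge p_0 q-\HH(e,p_0)$, and by letting $p_0\to+\infty$ (resp. $-\infty$) along the direction of $q$ one extracts the quotient bound $\LL(e,q)/|q|\to+\infty$. Equivalently, superlinearity of $\HH$ and superlinearity of its conjugate $\LL$ are dual facts, and I would cite/invoke this duality.

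Next I would address strict convexity of $\LL(e,\cdot)$. The cleanest route is via the Legendre duality: strict convexity of $\LL$ is equivalent to differentiability of its conjugate $\HH$, which holds by \textbf{(H1)}; dually, differentiability of $\LL$ corresponds to strict convexity of $\HH$, also guaranteed by \textbf{(H1)}. Thus both $\HH(e,\cdot)$ and $\LL(e,\cdot)$ are simultaneously $C^1$ and strictly convex, and the Legendre transform is a bijection between their gradients. I would state this as a consequence of the standard Legendre--Fenchel theory for finite-dimensional strictly convex superlinear functions.

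Finally, the symmetry \eqref{lag2} follows by a short direct computation from \eqref{lag1}. Writing out the definition and substituting $p\mapsto -p$,
\begin{equation*}
\LL(-e,-q)=\max_{p}\big(p(-q)-\HH(-e,p)\big)=\max_{p}\big((-p)(-q)-\HH(-e,-p)\big)=\max_{p}\big(pq-\HH(e,p)\big)=\LL(e,q),
\end{equation*}
where the middle equality reindexes the supremum and the third uses \eqref{lag1}. I expect the main (though modest) obstacle to be presenting the convexity/differentiability interplay cleanly rather than any genuine difficulty: one must be careful that strict convexity of $\LL$ really does follow from differentiability of $\HH$ (and not merely convexity), which is precisely why \textbf{(H1)} assumes both strict convexity and differentiability of $\HH$. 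Everything else is a routine invocation of standard convex-analytic duality.
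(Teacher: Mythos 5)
Your proposal is correct and follows essentially the same route as the paper, which gives no detailed argument at all but simply notes that the statement follows from \textbf{(H1)}--\textbf{(H2)} and \eqref{lag1} via classical Legendre--Fenchel duality, citing \cite[Theorem 26.6]{Rockafellar} --- precisely the differentiability/strict-convexity duality and the reindexing computation you spell out. The only slight imprecision is your remark that superlinearity of $\HH$ and of $\LL$ are ``dual facts'' (the conjugate notion of superlinearity is finiteness everywhere), but the lower-bound argument $\LL(e,q)\ge p_0q-\HH(e,p_0)$ you actually use is correct and does not depend on that phrasing.
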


\medskip

\noindent This is a consequence of  {\bf (H1)--(H2)}  and \eqref{lag1}  (see, for instance, \cite[Theorem 26.6]{Rockafellar}).\\

\smallskip

In what follows, we mostly consider $\LL(e,\cdot)$ restricted to $[0,+\infty)$. We have
\[\LL(e,q)=  \max_{p \geq \si(e,a_e)} \big (p \, q - \HH(e,p) \big ) \txt{for $q \geq 0$,}\]
an equivalent formula is
\begin{equation}\label{traviata}
\LL(e,q)=  \max_{a \geq a_e}  \big ( q \,\si(e,a) - a \big ) \txt{for $q \geq 0$,}
\end{equation}
from which it follows that $\LL(e,0)=-a_e$.\\

Given   $\om \in \Cf^1(\Gamma,\R)$,  we further consider the { \it $\omega$-- modified Hamiltonian}
\[\HH^\om(e,p):= \HH(e,p + \langle \om, e \rangle),\]
which  clearly still satisfies assumptions {\bf (H1)}, {\bf (H2)}. It is therefore  invertible on the right of its minimizer and the inverse is
\begin{equation}\label{sigmaom}
 \si^\om(e,a):= \si(e,a)- \langle \om, e \rangle.
\end{equation}
The corresponding { \it $\omega$--modified Lagrangian} is given by
\[\LL^\om(e,q) := \LL(e,q) -  \langle \om, q e \rangle.\]

\smallskip

\begin{Remark}\label{un}
 Note that  $a_e$ does not depend on $\omega$, {\it i.e.}, it is the same for  $\HH^\omega(e,\cdot)$. In fact by \eqref{cumpa}  $a_e$ is characterized by the relation
\[\si(e,a_e) + \si(-e,a_e) =0\]
and by \eqref{sigmaom}
\[\si(e,a_e) + \si(-e,a_e)=\si^\om(e,a_e) + \si^\om(-e,a_e) \txt{for any $1$--cochain $\om$.}\]
\end{Remark}

\medskip

\bigskip

 \section{Probability measures on edges}\label{misura}

\subsection{Preamble: parametrized paths} \label{paths}
  The notion of {parametrized path} is central in the paper and it will be essential  to define occupation measures. \\

   Intuitively speaking, a parametrized path is a path where it is assigned to any edge a non-negative  {\it average speed}  and  a  {\it  time} needed to go through it. The time is the inverse of the speed, if the latter  is positive, while it can be any possible positive number if the speed is zero. We motivate this choice in Section \ref{dragone} in the case where $\G$ is the abstract graph associated to a network.

\begin{Definition} \label{para}
  We say that
 $\xi=(e_i,q_i,T_i)_{i=1}^M$  is  a {\it parametrized path}  if
 \begin{itemize}
   \item[{\bf (i)}] $(e_i)_{i=1}^M$ is a family of concatenated edges which is called the {\em support} of $\xi$;
   \item[{\bf (ii)}] the  $q_i$ are non-negative
 numbers and
\[ T_i =    \left \{ \begin{array}{cc}
                     \frac 1{q_i} & \quad\hbox{if $q_i > 0$} \\
                       \hbox{a positive constant} &  \quad\hbox{if $q_i = 0$};
                   \end{array}  \right . \]
we denote by $ T_\xi:=\sum_i T_i$ the {\em total time} of the parametrization of $\xi$;
   \item[{\bf (iii)}]  if all the   $q_i's$ vanish then $\oo(\xi)= \tt(\xi)$;
   \item[{\bf (iv)}]  if $q_i=0$ and $e_{i+1} \neq -e_i$ then $q_{i+1} \neq 0$;
   \item[{\bf (v)}] if $q_i \neq 0$, $i >1$, then
   \[\oo(e_i)=  \tt(e_j)  \txt{ with $ j =\max \{k < i, \, q_k \neq 0\}$.}\]
   \end{itemize}
    We call a {\em parametrized cycle}, a parametrized path supported on a closed path (or cycle).
 We call a {\em parametrized circuit}, a parametrized path supported on a circuit.\\
 \end{Definition}

\begin{Remark}
Intuitively, a parametrized path can be thought as a concatenation of  triples with non-zero average velocity, and  pairs of triples ({\it i.e.},  {\it equilibrium circuits}) of the form $ \{(e,0,T), (-e,0,S)\}$ for some $e\in \EE$ and $T,S>0$. In particular, condition {\bf (iv)} reads that  there cannot be consecutive equilibrium circuits corresponding to different edges.\\
Equilibrium circuits represent steady states, interpreted as floating with zero average speed  along  an edge and its opposite.
Therefore, if all  speeds vanish (item {\bf (iii)}) then initial and final position must coincide.
Items  {\bf (iv)}, {\bf (v)} further  prescribe   that an object possessing  vanishing speed on an edge $e$ starts floating back and forth along $e$ and $-e$, and exits the swinging state from the same vertex it entered,  only  when the speed becomes positive.
\end{Remark}

We deduce from the definition the following properties.

\begin{Proposition}\label{capizzi}
Let $\xi=(e_i,q_i,T_i)_{i=1}^M$ be a parametrized path.
\begin{itemize}
\item[{\bf (i)}] If some speed  $q_i$ is non-vanishing, and  $i_1, \cdots , i_K$  is the increasing sequence of indices corresponding to edges with positive speed,  then
       \[\bar \xi:= (e_{i_j}, q_{i_j}, T_{i_j})_{j=1}^K\]
  is  still a parametrized path with all average velocities different from $0$ and such that $\oo(\bar\xi)= \oo(\xi)$, $\tt(\bar\xi)= \tt(\xi)$.
  \item[{\bf (ii)}] If a  parametrized path has all  average speeds equal to zero, then it is supported on an edge and its opposite.
  \item[{\bf (iii)}] A parametrized circuit with some vanishing speed  consists of  an equilibrium circuit  $\{(e,0,T), (-e,0,S)\}$ for some $e\in \EE$ and $T,S>0$.
\end{itemize}
\end{Proposition}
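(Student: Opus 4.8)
The plan is to establish the three items in the order {\bf (ii)}, {\bf (i)}, {\bf (iii)}, since the later ones lean on the structure uncovered in the earlier ones. The common thread is the elementary structural fact, encoded in conditions {\bf (iv)}--{\bf (v)} of Definition \ref{para} and spelled out in the Remark following it, that vanishing-speed edges can occur only grouped into complete equilibrium circuits $(e,0,\cdot),(-e,0,\cdot)$ that return to the vertex from which the floating started. Item {\bf (ii)} I would dispatch directly: if every $q_i=0$, then condition {\bf (iv)} forces $e_{i+1}=-e_i$ for all $i$, so the support alternates between a single edge $e:=e_1$ and its reverse $-e$, i.e. $\xi$ is supported on $\{e,-e\}$. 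Condition {\bf (iii)} of the definition, giving $\oo(\xi)=\tt(\xi)$, moreover forces the length to be even and to end in $-e$, a fact I record for later use.

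For {\bf (i)}, let $i_1<\cdots<i_K$ be the positive-speed indices and set $\bar\xi:=(e_{i_j},q_{i_j},T_{i_j})_{j=1}^K$. I would verify the five defining conditions for $\bar\xi$: conditions {\bf (ii)}--{\bf (iv)} of Definition \ref{para} hold immediately because every speed in $\bar\xi$ is positive (the first inherits $T_{i_j}=1/q_{i_j}$, the other two being vacuous), while the concatenation condition {\bf (i)} and condition {\bf (v)} both reduce to the single claim $\tt(e_{i_j})=\oo(e_{i_{j+1}})$. This is exactly what condition {\bf (v)} of the original path yields: applied to the positive-speed edge $e_{i_{j+1}}$ (with $i_{j+1}>1$), whose most recent positive-speed predecessor is precisely $e_{i_j}$ since the intervening edges all have zero speed, it gives $\oo(e_{i_{j+1}})=\tt(e_{i_j})$. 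It then remains to match endpoints: the prefix $e_1,\dots,e_{i_1-1}$ and the suffix $e_{i_K+1},\dots,e_M$ consist entirely of vanishing-speed edges, hence by the structural fact are concatenations of complete equilibrium circuits, so the prefix returns to $\oo(\xi)$ before the first positive step and the suffix starts and ends at $\tt(e_{i_K})$. Consequently $\oo(\bar\xi)=\oo(e_{i_1})=\oo(\xi)$ and $\tt(\bar\xi)=\tt(e_{i_K})=\tt(\xi)$.

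For {\bf (iii)}, let $\xi$ be a parametrized circuit with at least one vanishing speed. The decisive observation is that in a \emph{simple} closed path a pair of consecutive reversed edges $e_l,e_{l+1}=-e_l$ can occur only if the whole path has length two: indeed $e_{l+1}=-e_l$ makes the vertices at positions $l-1$ and $l+1$ coincide, and in a simple closed path the only repeated vertex is the common initial/terminal one, forcing $l=1$ and $M=2$. If \emph{all} speeds vanish, item {\bf (ii)} shows $\xi$ is supported on $\{e,-e\}$ with $e_1,e_2=-e_1$ consecutive reversed edges, so this observation gives $M=2$ and $\xi=\{(e,0,T),(-e,0,S)\}$. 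If instead some speed is positive, I would derive a contradiction: any vanishing-speed edge belongs to a complete equilibrium circuit and hence produces consecutive reversed edges, forcing $M=2$; but a length-two circuit carrying both a zero and a positive speed violates the structural fact, since the positive edge $-e$ would originate at $\tt(e)\neq\oo(e)=\oo(\xi)$ rather than at $\oo(\xi)$. Thus the mixed case is impossible and {\bf (iii)} follows.

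I expect the main obstacle to be the endpoint bookkeeping in {\bf (i)} and the exclusion of the mixed case in {\bf (iii)}: both hinge on showing rigorously that vanishing-speed stretches — especially those at the very beginning or very end of the parametrization, where condition {\bf (v)} has no positive-speed predecessor to refer to — are genuine complete equilibrium circuits returning to their base vertex. This is where conditions {\bf (iv)} and {\bf (v)} must be invoked most carefully, together with the no-loops assumption {\bf (G3)}, which is precisely what rules out a lone vanishing-speed edge wedged between two positive-speed ones (such an edge $e$ would force $\oo(e)=\tt(e)$).
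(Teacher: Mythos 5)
Your argument is checkable against Definition \ref{para} in most places, and those places are fine: item \textbf{(ii)} via condition \textbf{(iv)} and the no-loop assumption \textbf{(G3)}; the concatenation of $\bar\xi$ in item \textbf{(i)}, since condition \textbf{(v)} applied to $e_{i_{j+1}}$ indeed gives $\oo(e_{i_{j+1}})=\tt(e_{i_j})$; the observation that consecutive reversed edges force a circuit to have length two; and the exclusion, via \textbf{(v)} plus \textbf{(G3)}, of odd-length zero-speed stretches lying \emph{between} two positive-speed edges. (For the record, the paper offers no proof of this proposition at all -- it is stated as an immediate deduction from the definition -- so only correctness, not route, can be compared.)

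The genuine gap is exactly the point you defer to your final paragraph, and it is not a delicate step awaiting a more careful argument: it cannot be derived from the literal clauses of Definition \ref{para}. Your ``structural fact'' follows from \textbf{(iv)}--\textbf{(v)} only for interior zero-speed stretches. Condition \textbf{(v)} constrains the origins of positive-speed edges only, so it imposes nothing on a zero-speed stretch sitting at the \emph{end} of the path; and for a stretch at the \emph{beginning}, the max in \textbf{(v)} runs over the empty set, so the condition is vacuous there as well. Concretely, $\xi=((e,q,1/q),(f,0,T))$ with $q>0$ and $\tt(e)=\oo(f)$ satisfies every clause of Definition \ref{para} as written, yet $\tt(\bar\xi)=\tt(e)=\oo(f)\neq \tt(f)=\tt(\xi)$ by \textbf{(G3)}, contradicting the conclusion of item \textbf{(i)}; likewise $((e,0,T),(-e,q,1/q))$ is, literally, a parametrized circuit with a vanishing speed that is not an equilibrium circuit, contradicting item \textbf{(iii)}. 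The proposition is therefore true only under the reading spelled out in the Remark following Definition \ref{para}, namely that zero-speed triples occur exclusively as complete equilibrium pairs $\{(e,0,T),(-e,0,S)\}$ returning to their entry vertex. Under that reading your structural fact holds essentially by definition and the rest of your argument closes; under the reading your outline commits to -- extracting the fact from \textbf{(iv)}--\textbf{(v)} together with \textbf{(G3)} -- the step fails at both ends of the path, and no amount of care in invoking those conditions will rescue it.
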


\medskip

\subsection{Basic definitions}
 In this section we introduce a notion of tangent bundle $T \Gamma$ of $\Gamma$ and define  suitable sets of probability  measures  that we will use to build a  version of Mather theory on graphs.

\begin{Definition} \label{tgbundle}
The {\it tangent bundle} of $\Gamma$
is defined as
\begin{equation*}
T\Gamma:= \EE\times \R^+ / \sim,
\end{equation*}
where $\R^+:=[0,+\infty)$ and $\sim$ is the identification $(e,0)\sim (-e,0)$.\\
We denote each fiber by $\R^+_e := \{e\}\times \R^+$.
\end{Definition}

 We endow $T \Gamma:= \EE \times \R^+$ with a distance defined as: \begin{eqnarray*}
d((e_1,q_1), (e_2,q_2)) :=
\left\{
\begin{array}{ll}
q_1+q_2+1  & \qquad\hbox{if $e_1\neq \pm e_2$}\\
q_1+q_2  & \qquad\hbox{if $e_1=e_2$}\\
|q_1-q_2| & \qquad\hbox{if $e_1=-e_2$}.
  \end{array}
  \right.
\end{eqnarray*}
This makes $T \Gamma$ a Polish space. A set $A$ is open in  $T \Gamma$ in the induced topology if and only if $A \cap  \R^+_e$ is open in the natural topology of $\R^+$ for any $e$. Accordingly, $F$ is a Borelian set on $T \Gamma$ if and only if  $F \cap  \R^+_e$ is Borelian in $\R_e^+$ for any edge $e$. \\

\begin{Definition}\label{defsupportmeasure}
Given $\mu$  a  Borel  probability measure on $T \Gamma$,
we define the {\it support of $\mu$} as the set
$$\supp_\EE \mu= \{e \in \EE \mid \mu(\R^+_e) > 0 \}.\\$$
\end{Definition}

\bigskip

\begin{Proposition}
Any Borel probability measure in $T\Gamma$ can be decomposed as the convex combination of Borel probability measures in each fiber, namely
\begin{equation}\label{amuchina}
\mu (F) = \sum_{e\in \EE} \la_e \, \mu_e (F\cap \R^+_e) \qquad \mbox{for any Borelian set $F \subseteq T\Gamma$},
\end{equation}
where $\mu_e$ are Borel probability measures on $\R^+_e$ and $\lambda_e \geq 0$ such that $\sum_{e\in \EE} \lambda_e =1$. In particular, $\supp_\EE \mu= \{e \in \EE \mid \la_e\neq 0 \}.$
\end{Proposition}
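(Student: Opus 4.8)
The plan is to read \eqref{amuchina} as the disintegration of $\mu$ over the finite Borel partition of $T\Gamma$ into its fibers, which is available precisely because $\EE$ is finite by {\bf (G1)}. The topology on $T\Gamma$ is the disjoint-union topology: a set is open if and only if it meets each $\R^+_e$ in an open set, so each fiber $\R^+_e$ is clopen, and in particular Borel. The only structural feature to keep track of is the identification $(e,0)\sim(-e,0)$, which makes the fibers $\R^+_e$ and $\R^+_{-e}$ share their base point; apart from these finitely many glued points the fibers are genuinely disjoint. I therefore record the partition $T\Gamma = Z\,\sqcup\,\bigsqcup_{e\in\EE}\R^{+,\circ}_e$, where $\R^{+,\circ}_e:=\{e\}\times(0,+\infty)$ and $Z$ is the finite set of zero points, one for each pair $\{e,-e\}$.

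Next I would define the weights by assigning each open fiber to its own edge and splitting the mass of each zero point equally between the two opposite fibers, namely
\[ \la_e:=\mu(\R^{+,\circ}_e)+\tfrac12\,\mu(\{(e,0)\})\qquad\text{for every }e\in\EE. \]
Since each point of $Z$ is counted once as $(e,0)$ and once as $(-e,0)$, a direct count gives $\sum_{e\in\EE}\la_e=\mu(T\Gamma\setminus Z)+\mu(Z)=1$, and clearly $\la_e\ge 0$. For every $e$ with $\la_e>0$ I would then set $\mu_e(B):=\la_e^{-1}\big(\mu(B\cap\R^{+,\circ}_e)+\tfrac12\mu(B\cap\{(e,0)\})\big)$ for Borel $B\subseteq\R^+_e$, which is a Borel probability measure on the fiber; for $\la_e=0$ the choice of $\mu_e$ is immaterial, so one may take any fixed probability measure there. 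Identity \eqref{amuchina} is then checked by substituting these definitions into the right-hand side and using finite additivity of $\mu$ over the partition: the open-fiber terms reassemble $\mu(F\setminus Z)$, while each zero point, appearing in exactly the two terms indexed by $e$ and $-e$, contributes its full mass $\mu(F\cap\{(e,0)\})$, so that the total equals $\mu(F)$.

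The final assertion on the support is immediate from the construction: since $\mu(\R^+_e)=\mu(\R^{+,\circ}_e)+\mu(\{(e,0)\})$, the quantity $\la_e$ vanishes if and only if $\mu(\R^+_e)=0$, whence $\{e:\la_e\neq 0\}=\{e\in\EE:\mu(\R^+_e)>0\}=\supp_\EE\mu$. The computation is entirely elementary; the one point genuinely requiring care — and the reason for the factor $\tfrac12$ rather than the naive choice $\la_e=\mu(\R^+_e)$ — is the gluing of the zero points, which would otherwise either make the weights sum to more than $1$ or break the symmetry needed for the support identity to hold simultaneously for $e$ and $-e$.
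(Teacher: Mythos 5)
Your proof is correct and follows essentially the same route as the paper's: both decompose $\mu$ fiber by fiber and resolve the gluing $(e,0)\sim(-e,0)$ by splitting the mass of each zero point between the two opposite fibers. The only difference is that you fix the symmetric split $\tfrac12$--$\tfrac12$, whereas the paper allows an arbitrary split $m_e+m_{-e}=1$ with $m_e,m_{-e}>0$ (making explicit that the decomposition is not unique); both choices yield the weight identity $\sum_e\la_e=1$ and the support characterization in the same way.
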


\begin{proof}
 We distinguish two cases, according to whether $\mu(e,0) =0$ or $\mu(e,0) > 0$. In the first case, we set $\la_e:= (\mu(\R^+_e))$: if $\la_e =0$  ({\it i.e.}, $e \not\in \supp_\EE \mu$), then the choice of $\mu_e$ is irrelevant; otherwise  we define $\mu_e$ as the  restriction of $\mu$  on $\R^+_e$, normalized in order to be a probability measure.

  If $(\mu(e,0) >0$, then  $\mu_e$ is not uniquely determined since we have a degree of freedom in sharing the contribute of $\mu(e,0)=\mu(-e,0)$ between $e$ and $-e$. For, we introduce two positive constant $m_e$ and $m_{-e}$, such that  $m_e+m_{-e}=1$,  and denote by $\hat \mu_e$ the restriction of $\mu$ to $\R^+_e \setminus \{0\}$. Then, we define
 \begin{eqnarray*}
   \mu_e &:=& \frac 1{\hat \mu_e(\R^+_e) + m_e \mu(e,0)} \, \hat \mu_e + m_e \, \de(e,0) \\
   \la_e&:=& \hat \mu_e(\R^+_e) + m_e \mu(e,0),
 \end{eqnarray*}
 where $\delta(e,0)$ denotes Dirac delta at $(e,0)$.
\end{proof}

\bigskip

Note that a Borel probability measure $\mu = \sum_{e\in \EE} \la_e \, \mu_e$ has finite first momentum if and only such property holds for any $\mu_e$, namely
\[ \int_0^{+\infty}   q \, d\mu_e < +\infty  \txt{for any $e \in \EE$.} \]
\smallskip
We denote by  $\mathbb  P$  the family of Borel probability measures on $T \Gamma$ with finite first momentum and we endow it  with the  (first) Wasserstein distance (see, for example, \cite{Villani}). The corresponding convergence of measures can be expressed in duality with continuous functions $F(e,q)$ on $T \Gamma$ possessing linear growth at infinity;  namely,  given a sequence $\{\mu_n\}_n$ and $\mu$ in $\bM$
\[\mu_n \to \mu \, \Longleftrightarrow \, \int F(e,q) \, d \mu_n \to \int F(e,q) \, d \mu_n\]
for any  function $F$ continuous in $T\G$ such that
\[|F(e,q)| \leq a_e \, q + b_e \qquad\hbox{for any $q \geq 0$ and suitable  $a_e, b_e \in \R$.}\]

\medskip

\subsection{Closed probability measures on $T\Gamma$}

Let us  observe that for any  $\om \in \Cf^1(\Gamma, \R)$, the function
\[(e,q) \longmapsto \langle \om, q \,e \rangle \]
is continuous  with linear growth on $T \Gamma$.  Given $\mu = \sum_e  \la_e \, \mu_e \in \mathbb P$, we consequently define
\begin{eqnarray}
  \int \om \, d \mu &:=&  \sum_{e\in \EE} \la_e \, \int_0^{+\infty}   \langle \om, q \,e \rangle  \,d\mu_e \label{rota}\\  &=&   \left \langle \om, \sum_{e\in \EE} \left [\la_e \, \int_0^{+\infty} q \, d\mu_e\right ]  \, e \right  \rangle . \nonumber
\end{eqnarray}
This associates to $\mu$ a 1--chain
\begin{equation}\label{defrho}
  \rho(\mu):= \sum_{e \in \EE} \left [ \la_e \, \int_0^{+\infty} q \, d\mu_e \right ] \, e \in \Cf_1(\Gamma,\R).
\end{equation}

\medskip

 \begin{Definition}
 We say that $\mu$ is a {\it closed measure} if
\[ \int df d\mu= 0 \txt{for any $f \in \Cf^0(\Gamma,\R)$.}\]
We set $\bM := \{ \mu \in \mathbb P\, : \, \mu \;\hbox{is closed}\}$
\end{Definition}

\smallskip

\begin{Remark}\label{golpetrump}
{\bf (i)} Given $\mu \in \mathbb P$, we have for any $ g \in \Cf^0(\Gamma,\R)$
\[\int dg \, d\mu =   \langle dg ,   \rho(\mu)   \rangle,   \]
hence
\[ \hbox{$\mu$ is closed} \; \Longleftrightarrow \; \partial \rho(\mu)=0 \; \Longleftrightarrow \; \rho \in H_1(\Gamma,\R),\]
namely $\rho(\mu)$ is a 1--cycle.  We call it {\it rotation vector} (or {\em Schartzman asymptotic cycle}) of $\mu$.  This should  be compared   with the corresponding classical definitions in Aubry--Mather theory (see \cite{ContrerasIturriaga}, \cite{Sorrentinobook}).

\noindent {\bf (ii)}  Given $\mu \in \mathbb M$ and  $ \omega \in \Cf^0(\Gamma,\R)$, it follows from the definition of closed measure and \eqref{rota} that
$$ \int \om \, d \mu = \langle [\omega], \rho(\mu) \rangle, $$
{\it i.e.}, it only depends on the cohomology class $[\omega]\in H^1(\Gamma, \R)$.\\

\end{Remark}

\smallskip

\begin{Proposition}\label{closedclosed} The subset $\bM \subset \mathbb P$ is    convex  and closed in the Wasserstein topology.
\end{Proposition}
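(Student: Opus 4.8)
The plan is to derive both assertions from a single observation: for each fixed $f\in\Cf^0(\Gamma,\R)$, the quantity $\int df\,d\mu$ is, by \eqref{rota} and the fibered decomposition \eqref{amuchina}, the integral against $\mu$ of the function
\[
F_f(e,q):=\langle df,\,q\,e\rangle = q\,df(e),
\]
which I will argue is a continuous function of at most linear growth on $T\Gamma$. Convexity is then immediate. Given $\mu_0,\mu_1\in\bM$ and $t\in[0,1]$, the measure $\mu_t:=(1-t)\mu_0+t\mu_1$ is again a probability measure on $T\Gamma$ with finite first moment, hence $\mu_t\in\mathbb P$, and by linearity of the integral
\[
\int df\,d\mu_t=(1-t)\int df\,d\mu_0+t\int df\,d\mu_1=0
\]
for every $f$; thus $\mu_t\in\bM$.

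For closedness I would take a sequence $\mu_n\in\bM$ converging to some $\mu\in\mathbb P$ in the first Wasserstein distance, and show $\mu\in\bM$. The point is that $F_f$ is an admissible test function for the duality characterization of Wasserstein convergence recalled above: on each fiber $\R^+_e$ it is the linear map $q\mapsto q\,df(e)$, so it is continuous there and satisfies $|F_f(e,q)|\le |df(e)|\,q$, i.e. the bound $|F_f(e,q)|\le a_e q+b_e$ with $a_e=|df(e)|$ and $b_e=0$ in the notation of the growth condition. Moreover $F_f$ descends to a well-defined continuous function on the glued space $T\Gamma=\EE\times\R^+/\sim$, since at the identified points $(e,0)\sim(-e,0)$ both fibers return the value $0$ (consistently with the cochain compatibility $df(-e)=-df(e)$). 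Consequently
\[
0=\int df\,d\mu_n=\int F_f\,d\mu_n \longrightarrow \int F_f\,d\mu=\int df\,d\mu,
\]
so $\int df\,d\mu=0$ for every $f\in\Cf^0(\Gamma,\R)$, which is exactly the statement that $\mu$ is closed. Equivalently, in terms of the rotation vector of Remark \ref{golpetrump}, closedness of $\mu_n$ reads $\partial\rho(\mu_n)=0$, and the continuity of $\mu\mapsto\int F_f\,d\mu$ propagates this to the limit.

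The argument is essentially soft, so I do not expect a serious obstacle beyond bookkeeping. The only step requiring genuine care is the verification that $F_f$ is a legitimate test function for the Wasserstein duality, namely that it is continuous on the non-standard space $T\Gamma$ and of at most linear growth; this is precisely where the identification $(e,0)\sim(-e,0)$ and the compatibility relation $df(-e)=-df(e)$ enter, ensuring that $F_f$ is well defined on the quotient. Once this is granted, both convexity and closedness follow at once from the linearity of $\mu\mapsto\int df\,d\mu$ and the stability of such linear-growth integrals under Wasserstein convergence.
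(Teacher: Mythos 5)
Your proposal is correct and follows essentially the same route as the paper: convexity from linearity of $\mu\mapsto\int df\,d\mu$, and closedness by recognizing $(e,q)\mapsto\langle df,q\,e\rangle$ as a continuous test function of linear growth on $T\Gamma$, so that Wasserstein convergence passes the condition $\int df\,d\mu_n=0$ to the limit. The only difference is that you spell out the (worthwhile) verification that this function is well defined on the quotient at the identified points $(e,0)\sim(-e,0)$, which the paper leaves implicit.
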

\begin{proof} The convexity property is obvious. Let $\mu_n$ be a sequence of closed probability  measures converging in the Wasserstein sense to $\mu$. We consider $g \in \Cf^0(\Gamma, \R)$,  then associating  to $dg $ the continuous function  on $T \Gamma$
with linear growth $(e,q) \longmapsto \langle dg, q \,e \rangle$
and taking into account \eqref{rota}, we get
\[ \int dg \, d\mu_n \to \int dg \, d\mu.\]
This concludes the proof.
\end{proof}

\medskip

Let us define the map ${\rho}: \bM \longrightarrow  H_1(\G,\R)$ that to any closed probability measure $\mu$ associates its rotation vector $\rho(\mu)$ (see Remark \ref{golpetrump} {\bf(i)}).
One  proves the following properties.

\begin{Proposition} \label{proprho}
The map $\rho$ is continuous and affine (for convex combinations), {\it i.e.,} for every $\lambda\in [0,1]$ and $\mu_1, \mu_2 \in \bM$
$$
{\rho}\left(\la \mu_1 + (1-\la)\mu_2  \right) = \la {\rho}(\mu_1) + (1-\la) {\rho}(\mu_2).\\
$$
In particular, it is surjective.
\end{Proposition}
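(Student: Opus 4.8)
The plan is to prove the three assertions---continuity, affineness, and surjectivity---essentially in that order, with surjectivity as the culminating step.

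First I would address continuity and affineness together, since both follow readily from the explicit formula \eqref{defrho} for $\rho$. The affine property is the easiest: fixing an edge $e$ and examining the coefficient of $e$ in $\rho(\la\mu_1+(1-\la)\mu_2)$, one uses that the measure $\la\mu_1+(1-\la)\mu_2$ decomposes fiberwise with weights and conditional measures combining linearly, so that $\la_e\int_0^{+\infty} q\,d\mu_e$ is an affine functional of $\mu$ on each fiber. Summing over $e\in\EE$ gives the claimed identity. For continuity, I would invoke the duality description of Wasserstein convergence recalled just before this proposition: for each edge $e$ the function $(e',q)\mapsto q\,\I\{e'=e\}$ (or, more carefully, a continuous function on $T\G$ with linear growth agreeing with $q$ on the fiber $\R^+_e$) is an admissible test function, so $\mu_n\to\mu$ in $\bM$ forces $\la_e^n\int q\,d\mu_e^n\to\la_e\int q\,d\mu_e$, i.e.\ each coefficient of $\rho(\mu_n)$ converges to the corresponding coefficient of $\rho(\mu)$. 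Since $H_1(\G,\R)$ is finite-dimensional, coefficientwise convergence is convergence, giving $\rho(\mu_n)\to\rho(\mu)$.

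The main work is surjectivity. The natural strategy is to exhibit, for every $1$--cycle $h\in H_1(\G,\R)$, an explicit closed measure $\mu$ with $\rho(\mu)=h$. By \eqref{cincinbin} and the representability of homology classes by closed paths (cited from \cite[pp.~40--41]{Sunada}), it suffices to realize the class $[\xi]=\sum_i e_i$ of a closed path $\xi=(e_i)_{i=1}^M$, together with arbitrary nonnegative scalings, and then use the affine/convexity structure to fill out all of $H_1$. Concretely, given a cycle $\xi$ I would place, on each traversed edge $e_i$, a Dirac mass $\delta(e_i,q_i)$ at a chosen speed $q_i>0$, and form $\mu=\sum_i \la_i\,\delta(e_i,q_i)$ with weights $\la_i$ proportional to the times $T_i=1/q_i$; the associated occupation-type measure is closed because $\partial\rho(\mu)=0$ precisely reflects that $\xi$ is a cycle (Kirchhoff's law \eqref{cincin}), and its rotation vector $\rho(\mu)=\sum_i[\la_i q_i]\,e_i$ is a positive multiple of $[\xi]$. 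Choosing the speeds/weights appropriately scales $\rho(\mu)$ to any desired positive multiple of $[\xi]$; the zero cycle is attained by an equilibrium-circuit measure (all speeds zero). Finally, since $\rho$ is affine and $H_1(\G,\R)$ is spanned by classes of closed paths, and since any $h\in H_1$ can be written as a difference $h=h^+-h^-$ of cycles that are themselves nonnegative combinations of the spanning classes, surjectivity follows from the image being a convex cone closed under the relevant combinations---in fact the convexity of $\bM$ (Proposition \ref{closedclosed}) transfers through the affine map $\rho$ to show its image is convex and contains a spanning set, hence is all of $H_1(\G,\R)$.

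The step I expect to be the main obstacle is the bookkeeping in the surjectivity argument: one must verify that the candidate measure built from a cycle is genuinely closed, i.e.\ that $\int df\,d\mu=0$ for all $f\in\Cf^0(\G,\R)$, which amounts to checking $\partial\rho(\mu)=0$ via Remark \ref{golpetrump}\,{\bf(i)}, and one must handle negative multiples and general homology classes carefully, since the measures live only over $\R^+$ (speeds are nonnegative) and reversing orientation means passing from $e$ to $-e$ rather than negating a speed. The clean way around this is to note that each basis $1$--cycle and its negative are both represented by closed paths (traverse the cycle in either direction), so every $h\in H_1(\G,\R)$ is a nonnegative combination of classes of closed paths; realizing each summand by a scaled cycle-measure and taking the corresponding convex combination, then rescaling, produces a closed measure with $\rho(\mu)=h$. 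Verifying that the rescaling stays within $\bM$ and yields exactly $h$ is the one place where the explicit constants must be tracked.
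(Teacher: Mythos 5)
Your proof is correct, and your continuity and affineness arguments essentially match the paper's: the paper tests Wasserstein convergence against cochains $\om$, obtaining $\langle c,\rho(\mu_n)\rangle \to \langle c,\rho(\mu)\rangle$ for every class $c \in H^1(\G,\R)$, which in finite dimensions is the same coefficientwise argument you run with the per-edge test functions agreeing with $q$ on $\R^+_e$. Where you genuinely diverge is surjectivity. The paper's construction is much more direct: given $h=\sum_{i=1}^N a_i e_i \in H_1(\G,\R)$, flip edges ($e_i \mapsto -e_i$) so that all $a_i>0$, and set $\mu=\sum_{i=1}^N \tfrac1N\,\de(e_i,N a_i)$; then $\rho(\mu)=h$ by \eqref{defrho}, and $\mu$ is automatically closed by Remark \ref{golpetrump} {\bf (i)} since $\partial\rho(\mu)=\partial h=0$. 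No representation of $h$ by closed paths is needed; the argument never leaves the level of $1$--chains. Your route --- Sunada's theorem that integral homology classes are represented by closed paths, realization of positive multiples of $[\xi]$ by occupation measures (in effect rederiving Remark \ref{rotvectoroccupmeasure}), and then convex combinations to fill out $H_1(\G,\R)$ --- is valid, and it has the conceptual merit of tying surjectivity to occupation measures, which the paper only exploits later. But it imports machinery not needed here and creates exactly the bookkeeping you flag (nonnegativity of speeds, the passage from $e$ to $-e$, tracking of scaling constants), all of which the paper's one-line construction sidesteps by absorbing signs into the choice of edge orientation and magnitudes into the speeds $Na_i$.
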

\begin{proof}
Let us first prove continuity.
If $\mu_n\to \mu$  in $\bM$  and $\omega$ is any   element  of $\Cf^1(\G,\R)$ with cohomology class $c$, then associating  to $\om $  the continuous function  on $T \Gamma$ with linear growth
$ (e,q) \longmapsto \langle \om, q \,e \rangle $
and taking into account \eqref{rota},
we have  that if $\mu_n$ converges to $\mu$ in the Wasserstein sense then
$$
\langle c, \rho(\mu_n) \rangle = \int \omega \,d\mu_n \longrightarrow \int \omega \, d\mu =  \langle c, \rho(\mu) \rangle.
$$
Since $c$ has been arbitrarily chosen in $H^1(\G,\R)$,  $\rho(\mu_n) {\longrightarrow} \rho(\mu)$ as $n\to +\infty$, which proves continuity.

The fact that the map $\rho$ is affine (under convex combination) is an immediate consequence of the definition  of the rotation vector.

Finally, let us prove surjectivity.
Let $h\in H_1(\G,\R)$ given by
$
h= \sum_{i=1}^N a_i e_i,
$
with $\partial (h) =0$; we can assume that $a_i>0$ (otherwise we substitute $e_i$ with $- e_i$).
Then,  it is sufficient to consider the measure
$\mu = \sum_{i=1}^N  \frac{1}{N}\de(e_i,N a_i)$   -- where $\delta(e,q)$ denotes Dirac delta at $(e,q)$ --
and use \eqref{defrho} to check that
$$
\rho(\mu)  = \sum_{i=1}^N  \frac{N a_i} {N} e_i = \sum_{i=1}^N  a_i  e_i = h.
$$
\end{proof}

\medskip

\subsection{Occupation measures}
Let us introduce the notion of {\it occupation measure}, which  can the thought as a measure  representation of  a parametrized path.

\begin{Definition}
   Given a parametrized  path
$\xi=(e_i,q_i,T_i)_{i=1}^M$, the associated  occupation  measure is
defined as
\begin{equation}\label{simon}
\mu_\xi:= \frac 1{T_\xi} \,  \sum_{i=1}^M T_i \, \de(e_i, q_i),
 \end{equation}
 where $T_\xi = \sum_{i=1}^M T_i$ and $\delta(e,q)$ denotes Dirac delta concentrated on the point $(e,q)$.
 \end{Definition}

\begin{Remark}
{\bf (i)}  Taking into account that an edge $e$ can be equal to $e_i$ for different values of the index $i$,  we see that   an occupation measure restricted to any edge is the convex combination of Dirac measures. \\
{\bf (ii)} For any $e \in \EE$, $\de(e,0)$  is a  closed  occupation measure corresponding to the equilibrium circuit based on $e$ with vanishing speed and any pair of positive numbers as time parametrization.
\end{Remark}

\medskip

\begin{Proposition}
Let $\mu_\xi$ be an occupation measure associated to a parametrized path $\xi=\{(e_i,q_i,T_i)\}_{i=1}^M$. Then,
$\mu_\xi$ is closed if and only if  $\xi$ is a parametrized cycle.
\end{Proposition}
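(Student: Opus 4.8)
The plan is to unwind both the definition of closedness, $\int df\,d\mu_\xi = 0$ for all $f \in \Cf^0(\Gamma,\R)$, and the definition of a parametrized cycle, $\oo(\xi) = \tt(\xi)$, and show they coincide by computing $\rho(\mu_\xi)$ explicitly. By Remark \ref{golpetrump}(i), $\mu_\xi$ is closed if and only if $\partial \rho(\mu_\xi) = 0$, so the whole statement reduces to identifying $\partial\rho(\mu_\xi)$ with a boundary term depending on $\oo(\xi)$ and $\tt(\xi)$.

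First I would compute $\rho(\mu_\xi)$ from the defining formula \eqref{defrho} applied to \eqref{simon}. Since $\mu_\xi = \frac{1}{T_\xi}\sum_i T_i\,\de(e_i,q_i)$, the pushforward under $q$ gives
\[
\rho(\mu_\xi) = \frac{1}{T_\xi}\sum_{i=1}^M T_i\, q_i\, e_i.
\]
Now the crucial observation is the interplay with item (ii) of Definition \ref{para}: whenever $q_i > 0$ we have $T_i q_i = 1$, whereas whenever $q_i = 0$ the corresponding term $T_i q_i e_i$ vanishes regardless of $T_i$. Hence
\[
\rho(\mu_\xi) = \frac{1}{T_\xi}\sum_{i\,:\,q_i>0} e_i,
\]
a sum over precisely the edges traversed at positive speed. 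Applying $\partial$ and using $\partial e_i = \tt(e_i)-\oo(e_i)$, the sum telescopes: by the concatenation property and item (v) of Definition \ref{para}, the positive-speed edges form a sub-path $\bar\xi$ (Proposition \ref{capizzi}(i)) with $\oo(\bar\xi)=\oo(\xi)$ and $\tt(\bar\xi)=\tt(\xi)$, so the interior vertices cancel and
\[
\partial\rho(\mu_\xi) = \frac{1}{T_\xi}\bigl(\tt(\xi) - \oo(\xi)\bigr).
\]

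From this identity both directions follow at once. If $\xi$ is a parametrized cycle, then $\oo(\xi)=\tt(\xi)$, so $\partial\rho(\mu_\xi)=0$ and $\mu_\xi$ is closed. Conversely, if $\mu_\xi$ is closed then $\tt(\xi)-\oo(\xi)=0$ as an element of $\Cf_0(\Gamma,\R)$; since distinct vertices are linearly independent in the $0$-chain group, this forces $\oo(\xi)=\tt(\xi)$, i.e.\ $\xi$ is a cycle. One subtlety I must handle separately is the degenerate case where \emph{all} speeds vanish: then $\rho(\mu_\xi)=0$ trivially, so $\mu_\xi$ is automatically closed, but by item (iii) of Definition \ref{para} such a $\xi$ already satisfies $\oo(\xi)=\tt(\xi)$ and is therefore a cycle, so the equivalence still holds.

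The main obstacle I anticipate is not the algebra of the telescoping sum but rather justifying cleanly that the positive-speed edges really do telescope to $\tt(\xi)-\oo(\xi)$ even when equilibrium circuits (zero-speed excursions along $e,-e$) are interleaved. This is exactly where items (iv) and (v) of Definition \ref{para} and Proposition \ref{capizzi}(i) must be invoked: they guarantee that dropping the zero-speed edges yields a genuine concatenated path with the same endpoints, so that the boundary of the restricted chain $\sum_{q_i>0} e_i$ is indeed the boundary of a path from $\oo(\xi)$ to $\tt(\xi)$. Once that structural point is established, the equivalence is immediate.
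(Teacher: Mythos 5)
Your proof is correct and is essentially the paper's argument in dual form: the paper tests $\mu_\xi$ against an arbitrary $0$--cochain $g$ and telescopes $\sum_{i:\,q_i\neq 0}\bigl(g(\tt(e_i))-g(\oo(e_i))\bigr)=g(\tt(\xi))-g(\oo(\xi))$, while you compute the chain $\rho(\mu_\xi)=\frac{1}{T_\xi}\sum_{i:\,q_i>0}e_i$ and telescope its boundary, invoking Remark \ref{golpetrump} {\bf (i)} to translate back to closedness. The key ingredients — $T_iq_i=1$ for positive speeds, vanishing of the zero-speed terms, Proposition \ref{capizzi} {\bf (i)} to justify the telescoping, and the separate treatment of the all-zero-speed case — are identical to those in the paper's proof.
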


\begin{proof}
{Let $g \in \Cf^0(\G,\R)$. Observe that for every $e\in \EE$
$$
\int dg\, d\delta(e,0) =0
$$
since we are integrating the function $\langle dg, qe\rangle$ with respect to $\delta(e,0)$.
The statement is trivial if all $q_i$ vanish (see Proposition \ref{capizzi}).
Let us assume that some $q_i\neq 0$; then,  recalling  Definition \ref{para} and Proposition \ref{capizzi}:
\begin{eqnarray*}
\int dg \, d\mu_\xi &=& \frac 1{T_\xi} \, \sum_{i=1}^M T_i \int dg \, d\delta(e_i, q_i)  =
 \frac 1{T_\xi} \, \sum_{i\mid q_i \neq 0} T_i \< dg, q_ie_i \ra \\ &=& \frac 1{T_\xi} \, \sum_{i\mid q_i \neq 0} \big ( g(\tt(e_i)) - g(\oo(e_i)) \big )
 =  \frac 1{T_\xi} \, \big ( g(\tt(\xi)) - g(\oo(\xi)) \big ).
\end{eqnarray*}
Therefore, $\mu_\xi$ is closed if and only if $g(\tt(\xi)) = g(\oo(\xi))$ for every $g\in \Cf^0(\G,\R)$, which is equivalent to $\tt(\xi) = \oo(\xi)$, {\it i.e.}, $\xi$ is a parametrized cycle.}
\end{proof}

\smallskip

\begin{Remark}\label{rotvectoroccupmeasure}
Given a parametrized cycle $\xi$, we have (see \eqref{cincinbin} for the definition of $[\xi]$)
\[ \rho(\mu_\xi)  = \frac 1{T_\xi} \, \sum_{i=1}^M e_i = \frac{[\xi]}{T_\xi}.\]
\end{Remark}

\medskip

We close this section with a density result.   This theorem is well known for measures on the tangent bundle of a manifold, a piece of folklore according to \cite{Bernard}.  We will not use it  in the rest of the paper, however  we include it for two  reasons: firstly, it somehow validates our previous definition of occupation measures, secondly because the proof, which follows the same lines of  \cite[Theorem 31]{Bernard}, is simple and illuminating, and represents a nice application of weak KAM theory on graphs to the analysis of closed probability measures.

\smallskip

\begin{Theorem}\label{ari} The set of closed occupation measures is dense in $\bM$.
\end{Theorem}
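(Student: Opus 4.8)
The inclusion ``$\subseteq$'' is immediate, since every closed occupation measure lies in $\bM$ and $\bM$ is closed by Proposition \ref{closedclosed}. For density I would proceed in two stages: first reduce to finitely supported measures, then realize each such measure as a Wasserstein limit of occupation measures of genuine parametrized cycles. The delicate point, which a pure separation argument on the convex hull cannot see, is that a \emph{single} cycle must reproduce a prescribed \emph{balanced} pattern of edge-traversals; this is exactly where closedness of the target, i.e.\ Kirchhoff's law \eqref{cincin}, enters. Note that this route never passes through convex combinations of occupation measures: each approximant is itself a single occupation measure.

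\emph{First reduction.} Given a closed $\mu=\sum_{e}\la_e\,\mu_e$ as in \eqref{amuchina}, I would replace each fibre measure $\mu_e$ on $\R^+_e$ by a finitely supported measure $\mu_e^n$ having the \emph{same mass and the same first moment} $\int_0^{+\infty}q\,d\mu_e$, and converging to $\mu_e$ in the $1$--Wasserstein distance: partition $\R^+$ into short intervals, place on each the conditional barycentre, and lump the tail into a single atom at its barycentre (controlled since $\mu_e$ has finite first moment). Because $\rho$ depends on the fibres only through the weights $\la_e$ and these first moments (see \eqref{defrho}), the measure $\mu^n:=\sum_e\la_e\mu_e^n$ satisfies $\rho(\mu^n)=\rho(\mu)$, so $\partial\rho(\mu^n)=0$ and $\mu^n$ is again closed, while $\mu^n\to\mu$. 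Thus it suffices to approximate a finitely supported closed measure $\nu=\sum_{k=1}^K c_k\,\delta(f_k,r_k)$, with $c_k>0$, $\sum_k c_k=1$ and $\partial\rho(\nu)=0$.

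\emph{Construction for the positive-speed atoms.} For the atoms with $r_k>0$ I look for a single closed walk traversing $(f_k,r_k)$ exactly $n_k$ times at speed $r_k$: such a parametrized cycle spends time $n_k/r_k$ there, so its occupation measure gives that atom weight $\frac{n_k/r_k}{\sum_j n_j/r_j}$, which matches $c_k$ once $n_k\approx N\,c_k r_k$ for a large integer $N$. Taking $n_k:=\lfloor N c_k r_k\rfloor$, the edge-multiset $\sum_k n_k f_k$ equals $N\rho(\nu)-\sum_k\theta_k f_k$ with $\theta_k\in[0,1)$; since $\partial\rho(\nu)=0$, its boundary is an integer $0$--chain whose mass is bounded independently of $N$. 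A closed walk realizing a multiset exists precisely when the multiset is balanced and its support is connected, so I would repair the bounded imbalance and restore connectivity by adjoining $O(1)$ extra traversals — correcting paths from deficit to excess vertices, and back-and-forth connectors between components of the support — each run at unit speed. The resulting multiset is balanced and connected, hence admits an Eulerian closed walk; labelling each copy of $(f_k,r_k)$ with speed $r_k$ and each correcting traversal with speed $1$ gives a parametrized cycle $\xi_N$.

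\emph{Zero-speed atoms, conclusion, and the obstacle.} The corrections cost total time $O(1)$ while $\sum_j n_j/r_j\asymp N$, so their contribution to $\mu_{\xi_N}$ is $O(1/N)$. I then insert the zero-speed atoms $\delta(f_k,0)$ as equilibrium circuits $\{(f_k,0,T_k),(-f_k,0,S_k)\}$ at vertices $\oo(f_k)$ visited by the walk — reached, if necessary, through further $O(1)$ connectors — choosing the free times $T_k,S_k$ to produce the prescribed weights; by Proposition \ref{capizzi} this is still a legitimate parametrized cycle, hence $\mu_{\xi_N}$ is closed, and $\mu_{\xi_N}\to\nu$ in the Wasserstein topology. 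The main obstacle is precisely this balancing step: rounding $Nc_kr_k$ to integers destroys the exact vanishing of the boundary, and one must argue that closedness of $\nu$ keeps the resulting defect bounded in $N$ and that it can be cancelled at a cost that is negligible in the limit. I expect the simultaneous bookkeeping of balance, connectivity, and the correct placement of the equilibrium circuits to be the most delicate part, the two reductions being essentially routine.
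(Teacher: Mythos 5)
Your proposal is correct in outline, but it takes a genuinely different route from the paper's. The paper argues by duality: it first shows, adapting \cite[Lemma 30]{Bernard}, that the closure of the set of closed occupation measures is convex, then invokes a Hahn--Banach type separation theorem in the Wasserstein space (Lemma \ref{preyounga}, taken from \cite{LOST}, via Lemma \ref{younga}) to reduce density to the statement that any continuous $F$ of linear growth with $\int F\,d\mu\geq 0$ for every closed occupation measure $\mu$ also satisfies $\int F\,d\nu\geq 0$ for every $\nu\in\bM$. This last step is proved by a weak KAM argument: $F$ is replaced by superlinear regularizations $F_n\downarrow F$, the auxiliary Hamiltonians $G_n(e,p)=\max_{q\geq 0}\big(pq-F_n(e,q)\big)$ are introduced together with their intrinsic lengths $\varphi^n_e$ at level $0$, the hypothesis on occupation measures forces $\varphi^n(\xi)\geq 0$ on every cycle, hence by Proposition \ref{sottosola} the discrete equation \eqref{younga4} has a subsolution $u$, and closedness of $\nu$ yields $0=\int q\,\langle du,e\rangle\,d\nu\leq\int F_n\,d\nu\to\int F\,d\nu$. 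Your construction is instead direct and combinatorial: fiberwise discretization preserving mass and first moments (hence the rotation vector, hence closedness), integer rounding of the balanced flow $N\rho(\nu)$, an $O(1)$ repair of the boundary defect --- which is bounded uniformly in $N$ precisely because $\partial\rho(\nu)=0$, exactly as you argue --- followed by an Eulerian closed walk and insertion of equilibrium circuits for the zero-speed atoms. What each buys: your route is more elementary and self-contained (no separation theorem in Wasserstein space, no convexity-of-the-closure lemma), proves the slightly finer fact that single closed occupation measures, not merely their convex combinations, are dense, and gives explicit approximants with a rate; the paper's route avoids all rounding, balance and connectivity bookkeeping and deliberately doubles as an application of the weak KAM theory of Section \ref{sec7} to measures, which is one of the authors' stated motivations for including the proof.

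One detail you should nail down in the last step: Definition \ref{para} {\bf (iv)} forbids consecutive equilibrium circuits on distinct edges, so when several zero-speed atoms are anchored at the same vertex --- or when $\nu$ consists only of zero-speed atoms on several distinct edges, in which case there is no positive-speed walk to start from --- you must interleave a positive-speed excursion, e.g.\ a back-and-forth $(e,1,1),(-e,1,1)$, between successive equilibrium pairs. This costs $O(1)$ additional time and so is harmless in the limit, but without it your approximant is not a legitimate parametrized cycle.
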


\smallskip

The proof is in Appendix \ref{B}.

\bigskip

\section{Mather's theory on graphs}\label{secMather}

Mather theory is about the minimization of the action functional
\[ \mu \longmapsto \int \LL^\om \, d\mu\]
on suitable subsets of closed probability measures. Results and definitions of this section are inspired by the corresponding ones in the classical Mather  theory, see \cite{ContrerasIturriaga}, \cite{Sorrentinobook}. We provide full details to make the text self--contained.

\smallskip

\subsection{Existence of minimizers}
We recall the main compactness criterion in the Wasserstein  space $\mathbb P$ (see, for example,  \cite{Villani}).
\begin{itemize}
  \item[--] A subset $\mathbb K \subset \mathbb P$ is relatively compact  if and only for any $\eps >0$ there exists a compact subset $K_\eps$ of $T\G$ such that
      \[\int_{K^c_\eps} q \, d\mu  < \eps \txt{for any $\mu \in \mathbb K$,}\]
      where $K^c_\eps$ stands for the complement of $K_\eps$ in $T \G$. \\
\end{itemize}

 From the superlinearity property of $\LL$, we derive the following property.

\begin{Proposition}\label{wasse} Given $a \in \R$, the set
\[\mathbb K_a:= \left \{ \mu \in \bM \mid \int \LL \, d\mu \leq a\right \}\]
is compact in $\bM$.
\end{Proposition}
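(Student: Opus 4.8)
The plan is to combine the tightness criterion quoted just above the statement with the lower semicontinuity of the action functional, making essential use of the superlinearity of $\LL$ from Proposition \ref{swanlake}. Since $\Gamma$ has finitely many edges and each $\LL(e,\cdot)$ is convex and superlinear, $\LL$ is bounded below on $T\Gamma$, say $\LL \geq -C$ for a constant $C$ which, enlarging it if necessary, we may assume satisfies $a + C > 0$; moreover, for every $\lambda > 0$ there is a constant $C_\lambda$ (independent of $e$, by finiteness of $\EE$) with $\LL(e,q) \geq \lambda q - C_\lambda$ for all $e \in \EE$ and $q \geq 0$.

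First I would show that $\mathbb K_a$ is relatively compact in $\mathbb P$. Fix $\eps > 0$. Since $\LL + C \geq 0$ and $\int(\LL + C)\, d\mu = \int \LL\, d\mu + C \leq a + C$ for every $\mu \in \mathbb K_a$, the nonnegative integrand $\LL + C$ has uniformly bounded integral. By superlinearity I can choose $R > 0$ so large that $\LL(e,q) + C \geq \frac{a+C}{\eps}\, q$ whenever $q > R$, uniformly in $e$. The set $K_\eps := \{(e,q) \in T\Gamma : q \leq R\}$ is a finite union of compact fibers, hence compact, and on its complement
\[
\int_{K_\eps^c} q\, d\mu \;\leq\; \frac{\eps}{a+C}\int_{K_\eps^c}\big(\LL(e,q)+C\big)\, d\mu \;\leq\; \frac{\eps}{a+C}\,(a+C) \;=\; \eps
\]
for every $\mu \in \mathbb K_a$. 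The compactness criterion then yields relative compactness of $\mathbb K_a$ in $\mathbb P$.

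It remains to check that $\mathbb K_a$ is closed in $\mathbb M$; together with relative compactness and Proposition \ref{closedclosed} (asserting that $\mathbb M$ is closed), this gives compactness. The crux is the lower semicontinuity of $\mu \mapsto \int \LL\, d\mu$ along Wasserstein-convergent sequences. Because each $\LL(e,\cdot)$ is convex, it is the supremum of its affine minorants; taking finite maxima over a countable exhausting family I obtain continuous functions $\LL_N$ of linear growth with $\LL_N \nearrow \LL$ pointwise. For $\mu_n \to \mu$ in $\mathbb M$ the linear-growth duality defining Wasserstein convergence gives $\int \LL_N\, d\mu_n \to \int \LL_N\, d\mu$, whence $\liminf_n \int \LL\, d\mu_n \geq \int \LL_N\, d\mu$; letting $N \to \infty$ and using monotone convergence yields $\liminf_n \int \LL\, d\mu_n \geq \int \LL\, d\mu$. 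Consequently, if $\mu_n \in \mathbb K_a$ and $\mu_n \to \mu$, then $\mu \in \mathbb M$ and $\int \LL\, d\mu \leq a$, i.e. $\mu \in \mathbb K_a$.

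The main obstacle is precisely this semicontinuity: since $\LL$ grows superlinearly it is not admissible in the linear-growth duality that characterizes Wasserstein convergence, so one cannot pass to the limit directly in $\int \LL\, d\mu_n$; the approximation from below by convex, linear-growth functions is what repairs this, and it is the only genuinely non-routine ingredient. The tightness step, by contrast, is a direct consequence of superlinearity and the finiteness of $\EE$.
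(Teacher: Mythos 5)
Your proof is correct. On the tightness side it is essentially the paper's argument: there too the superlinearity of $\LL$, uniform over the finitely many edges, is played against the bound $\int \LL\, d\mu \le a$, except that the paper argues by contradiction (a sequence $\mu^{(n)}\in\mathbb K_a$ violating the estimate on a fixed edge $e_0$ forces $a \ge h_n\,\eps + b$ with $h_n\to+\infty$), while you exhibit the cutoff $R$ directly; the two are interchangeable. The genuine difference is the closedness step. The paper's proof stops at the tightness estimate, i.e.\ it establishes only \emph{relative} compactness of $\mathbb K_a$, and lower semicontinuity of the action then appears as Corollary \ref{corwasse}, deduced from the proposition; since closedness of every sublevel set is precisely what lower semicontinuity means, that step is left implicit there, and your monotone approximation of the convex $\LL(e,\cdot)$ by finite maxima of affine minorants --- which have linear growth and hence are admissible test functions in the duality characterizing Wasserstein convergence --- supplies exactly the missing ingredient, making your write-up the more self-contained of the two. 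Two small caveats: your approximants must be genuine continuous functions on $T\G$, i.e.\ they must respect the identification $(e,0)\sim(-e,0)$; this is harmless because $\LL(e,0)=\LL(-e,0)$ by \eqref{lag2}, so the approximants for the pair $e,-e$ can be forced to share their value at $q=0$ (alternatively, bypass the construction by noting that Wasserstein convergence implies narrow convergence and invoking the Portmanteau inequality for lower semicontinuous integrands bounded below). Also, the compactness criterion asks for the strict inequality $<\eps$ while your estimate gives $\le\eps$; simply run the argument with $\eps/2$.
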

\begin{proof}  Assume that $K_a \neq \emptyset$, otherwise there is nothing to prove. According to  the compactness criterion in the Wasserstein space $\bM$ and the definition of $T\G$, it is enough to prove that,  given $\eps >0$, there exists $M_\eps >0$ such that
\[ \int_{\R^+_e \cap (M_\eps,+\infty)} q \, d\mu < \eps \txt{for any $e \in \EE$, $\mu \in \mathbb K_a$.}\]
If this is not the case, we find  $\eps >0$, $e_0 \in \EE$, a sequence of positively diverging numbers $M_n$ and a sequence of measures
\[ \mu^{(n)} = \sum_{e\in \EE} \la^{(n)}_e\, \mu^{(n)}_e  \in \mathbb K_a\]
such that
\[\int_{M_n}^{+ \infty} q \, d\mu^{(n)}_{e_0}  \geq \eps \txt{for any $n$.}\]
Taking into account that $\LL(e_0,\cdot)$ is superlinear, we find another positively diverging sequence $h_n$ satisfying
\[\LL(e_0,q) \geq h_n \, q \txt{for $q \geq M_n$.}\]
Since edges are finitely many, we can find a constant $b$ such that
\begin{eqnarray*}
  a &\geq& \int \LL(e,q) \, d\mu^{(n)} \geq \int_{M_n}^{+\infty} \LL(e_0,q) \, d\mu^{(n)}_{e_0} + b \\
  &\geq& h_n \, \int_{M_n}^{+\infty} q \, d\mu^{(n)}_{e_0} + b \geq h_n \, \eps +b, \\
\end{eqnarray*}
which, as $n$ goes to $+\infty$, leads to a contradiction.
\end{proof}

As a consequence:

\begin{Corollary}\label{corwasse} The action functional
$\mu \longmapsto \int \LL \, d\mu$
is lower semicontinuous on $\bM$.
\end{Corollary}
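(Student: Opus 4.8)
The plan is to read off the lower semicontinuity directly from the compactness of sublevel sets already granted by Proposition \ref{wasse}. The underlying general principle is that a functional $J$ with values in $(-\infty,+\infty]$ on a metric space is lower semicontinuous if and only if each of its sublevel sets $\{J\le a\}$ is closed; the forward implication is immediate, and the converse follows by extracting, for any $a$ strictly above $\liminf J(\mu_n)$, a subsequence lying in the closed set $\{J\le a\}$. Since $\bM$ equipped with the Wasserstein distance is a metric space, I would invoke this characterization.

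Before that, I would check that the action functional $\mu\longmapsto\int \LL\,d\mu$ is well defined with values in $(-\infty,+\infty]$, i.e.\ that it never equals $-\infty$. This is because $\LL$ is bounded below: testing the definition of the convex conjugate with $p=0$ gives $\LL(e,q)\ge -\HH(e,0)$ for every $q\ge 0$, and since there are finitely many edges, $\LL$ is bounded below by a single constant on all of $T\Gamma$. The integral is therefore meaningful, possibly $+\infty$ when the superlinear growth of $\LL$ outpaces the finite first momentum of $\mu$. With this in hand, for each $a\in\R$ the sublevel set of the functional is precisely $\mathbb K_a=\{\mu\in\bM : \int \LL\,d\mu\le a\}$, which Proposition \ref{wasse} asserts is compact in $\bM$; in a metric space compact subsets are closed, so every sublevel set is closed, and the characterization yields lower semicontinuity.

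I do not expect a genuine obstacle here, since Proposition \ref{wasse} carries the analytic weight; the only points I would be careful about are the lower boundedness of the integrand (so that the functional is $(-\infty,+\infty]$-valued and the sublevel-set criterion applies) and the metric structure of $\bM$ underlying that criterion. Should one prefer a self-contained argument avoiding the abstract characterization, there is a hands-on alternative: writing $\LL(e,q)=\sup_k\big(p_k\,q-\HH(e,p_k)\big)$ over a countable dense set $\{p_k\}$ and setting $\LL^{(K)}(e,q)=\max_{1\le k\le K}\big(p_k\,q-\HH(e,p_k)\big)$, monotone convergence gives $\int \LL\,d\mu=\sup_K\int \LL^{(K)}\,d\mu$; each $\LL^{(K)}$ has linear growth, so $\mu\longmapsto\int \LL^{(K)}\,d\mu$ is Wasserstein-continuous, and a supremum of continuous functionals is lower semicontinuous. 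I regard the first route as the cleaner one, precisely because it reuses Proposition \ref{wasse}.
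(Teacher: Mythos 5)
Your proposal is correct and follows essentially the same route as the paper: there the corollary is stated as an immediate consequence of Proposition \ref{wasse}, i.e.\ lower semicontinuity is read off from the fact that the sublevel sets $\mathbb K_a$ are compact (hence closed) in the metric space $\bM$, exactly your primary argument. Your additional checks (the lower bound $\LL(e,q)\ge -\HH(e,0)$ making the functional $(-\infty,+\infty]$-valued) and the alternative argument via suprema of linear-growth functionals are sound but not needed beyond what the paper intends.
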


This in turn implies:

\begin{Theorem}\label{act}  \hfill
\begin{itemize}
  \item[{\bf (i)}] The action functional admits minimum in $\bM$;
  \item[{\bf (ii)}] Given $h \in H_1(\G,\R)$, the action functional admits minimum in $\rho^{-1}(h)$.
\end{itemize}

\end{Theorem}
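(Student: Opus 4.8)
The plan is to deduce both parts from the compactness result in Proposition~\ref{wasse} combined with the lower semicontinuity in Corollary~\ref{corwasse}, applying the standard direct method of the calculus of variations. The essential point is that a lower semicontinuous function on a compact set attains its minimum, so I need to exhibit, for each minimization problem, a nonempty compact set on which the infimum is already achieved.

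For part \textbf{(i)}, I would first observe that $\bM$ is nonempty: for instance, any equilibrium circuit $\de(e,0)$ is a closed occupation measure (Remark after the occupation measure definition), hence lies in $\bM$. Let $m := \inf_{\mu \in \bM} \int \LL\,d\mu$, which is finite (it is bounded below since $\LL(e,\cdot)$ is superlinear, hence bounded below on each fiber, and edges are finitely many, so $\int \LL\,d\mu \geq \min_e \LL(e,\cdot) > -\infty$). Pick any $a > m$; then $\mathbb{K}_a = \{\mu \in \bM : \int \LL\,d\mu \leq a\}$ is nonempty and, by Proposition~\ref{wasse}, compact in $\bM$. The infimum of the action over $\bM$ coincides with its infimum over $\mathbb{K}_a$, since any minimizing sequence eventually enters $\mathbb{K}_a$. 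By Corollary~\ref{corwasse} the action is lower semicontinuous, so it attains its minimum on the compact set $\mathbb{K}_a$, and this minimizer realizes $m$.

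For part \textbf{(ii)}, I would run the same argument inside the fiber $\rho^{-1}(h)$. First, $\rho^{-1}(h)$ is nonempty because $\rho$ is surjective (Proposition~\ref{proprho}), and it is closed in $\bM$ because $\rho$ is continuous (same proposition), hence a closed subset of the Wasserstein space. The key observation is that $\mathbb{K}_a \cap \rho^{-1}(h)$ is the intersection of a compact set with a closed set, therefore compact. Setting $m_h := \inf_{\mu \in \rho^{-1}(h)} \int \LL\,d\mu$ (again finite by the same lower bound) and choosing $a > m_h$, the set $\mathbb{K}_a \cap \rho^{-1}(h)$ is nonempty and compact, the infimum over $\rho^{-1}(h)$ equals the infimum over this compact set, and lower semicontinuity delivers a minimizer.

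I do not expect any serious obstacle here, since all the heavy lifting—the tightness/compactness estimate exploiting superlinearity of $\LL$, and the lower semicontinuity—has already been done in Proposition~\ref{wasse} and Corollary~\ref{corwasse}. The only points requiring a line of care are the nonemptiness of the relevant sets (handled by the equilibrium-circuit measures and by surjectivity of $\rho$) and the fact that the sublevel sets $\mathbb{K}_a$ remain compact after intersecting with the closed fiber $\rho^{-1}(h)$; both are immediate once continuity and surjectivity of $\rho$ are invoked.
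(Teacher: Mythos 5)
Your proof is correct and follows essentially the same route as the paper's: both apply the direct method, combining the compactness of the sublevel sets $\mathbb K_a$ from Proposition~\ref{wasse} with the lower semicontinuity of Corollary~\ref{corwasse}, and, for part {\bf (ii)}, the closedness of the fiber $\rho^{-1}(h)$ coming from the continuity of $\rho$ in Proposition~\ref{proprho}. The only difference is that you spell out details the paper leaves implicit (nonemptiness of $\bM$ and of $\rho^{-1}(h)$, finiteness of the infimum, and the reduction to a nonempty compact sublevel set), all of which you handle correctly.
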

\begin{proof} Recall that a lower-semicontinuous function admits minimum on compact sets. Therefore, {(\bf i)} follows from Proposition \ref{wasse} and Corollary \ref{corwasse}.
Similarly,  {\bf (ii)} follows from Proposition \ref{wasse}, Corollary \ref{corwasse}, and the fact that $\rho^{-1}(h)$ is closed in $\bM$ (the map $\rho: \bM \to H_1(\G,\R)$ is continuous in force of  Proposition \ref{proprho}). \end{proof}

\medskip

\subsection{Mather's minimal average actions and Mather measures} \label{sec5.2}
We define {\it Mather's $\beta$--function} as:
\begin{eqnarray*}
\beta : H_1(\G,\R) &\longrightarrow& \R\\
h &\longmapsto& \min_{\mu \in \rho^{-1}(h)} \int \LL \, d\mu.
\end{eqnarray*}

The above minimum does exist in force of Theorem \ref{act} {\bf (ii)}.\\

 \begin{Definition} \label{cip}
 We say that a  measure $\mu \in \bM$ is a {\it Mather measure} {\it with homology} $h$ if $\int \LL\; \d\mu = \beta(h)$.
We denote the subset of these measures by $\bM^h$.\\
We define  the {\it Mather set of homology $h$} as
\begin{equation}
\widetilde{\MM}^h := \bigcup_{\mu \in \bM^h}  \supp \mu \subset T\G,
\end{equation}
where $\supp \mu$ denotes the support of $\mu$ in $T\Gamma$.
\end{Definition}
\bigskip

\noindent \underline{Properties of $\beta$}:
\begin{itemize}
\item $\beta$ is convex. In fact, let $h_1,h_2 \in H_1(\G,\R)$,  $\la \in [0,1]$ and let us consider $\mu_i \in \bM^{h_i}$ for $i=1,2$.
If follows from Proposition \ref{proprho} that
\[\rho(\la \mu_1 + (1-\la) \mu_2) = \la h_1 + (1-\la) h_2.\]
 Moreover, using
 the linearity of the integral and the definition of $\beta$, we obtain:
\begin{eqnarray*}
\beta(\la h_1 + (1-\la)h_2) &\leq&  \int  \LL \; d(\la \mu_1 + (1-\la) \mu_2) \\
& =& \la  \int \LL \;d\mu_1 + (1-\la) \int  \LL\; d\mu_2 \\
&=& \la \beta(h_1) + (1-\la) \beta(h_2).
\end{eqnarray*}
\item $\beta$ is superlinear. This could be proved directly by using the superlinearity of $\LL$; however, we  deduce it from the finiteness of its convex conjugate $\alpha$ (see \eqref{defalpha} and Remark \ref{stravinsky}).
\end{itemize}

\medskip

We consider the  convex conjugate of $\be$, that we shall call
{\it Mather's $\alpha$-function}:
\begin{eqnarray*}
\al : H^1(\G,\R) &\longrightarrow& \R\\
c &\longmapsto& \max_{h\in H_1(\G,\R)} \left( \langle c, h
\rangle - \beta(h) \right),
\end{eqnarray*}
where $\langle c, h \rangle$ denotes the pairing between
$H^1(\G,\R)$ and $H_1(\G,\R)$ defined in section \ref{subsecpairing}.
\\

One can also characterize $\alpha$ in a variational way, which shows that it is finite everywhere:
\begin{eqnarray}\label{defalpha}
\alpha(c) &=& \max_{h\in H_1(\G,\R)} \left( \langle c, h \rangle - \beta(h) \right) \\
&=& \max_{h\in H_1(\G,\R)} \left( \langle c, h \rangle -  \min_{\mu \in \rho^{-1}(h)} \int \LL \, d\mu \right)\nonumber\\
&=& - \min_{h\in H_1(\G,\R)} \left( \min_{\mu\in \rho^{-1}(h)} \left ( \int  \LL\; d\mu  -  \langle c, \rho(\mu) \rangle \right) \right) \nonumber\\
&=& - \min_{\mu\in \bM} \int \LL^\omega \; d\mu, \nonumber
\end{eqnarray}
where $\omega \in \Cf^1(\G,\R)$  has  cohomology class  $c$.   Due to the superlinearity of $\LL^\omega$, we see, arguing as in Proposition \ref{wasse}, that the sublevels of $\LL^\om$ are compact in the Wasserstein  topology, and consequently by Proposition \ref{closedclosed} the minimum in the above formula does exist. Therefore $\al$ is finite, convex with convex conjugate equal to $\be$.

\begin{Remark}\label{stravinsky}
The fact that $\al$ is finite, convex with convex conjugate equal to $\be$,  implies that  $\beta$  has superlinear growth. In fact,  a convex function  on finite dimensional vector spaces possess a  finite convex conjugate if and only if it has superlinear growth, see \cite{Rockafellar}. \\
\end{Remark}

\medskip

\begin{Definition}\label{ciop}
Given $c$ in $H^1(\G,\R)$ and $\om$ in the class $c$, we say that a  measure $\mu \in \bM$ is a {\it Mather measure} {\it with cohomology} $c$ if $\int \LL^\om \, d\mu = -\alpha(c)$  (observe that being $\mu$ closed, this notion does not depend on the choice of the representative $\omega$, but only on its cohomology class).
We denote the subset of these measures by $\bM_c$.\\
We define  the {\it Mather set of cohomology $c$} as
\begin{equation}
\widetilde{\MM}_c := \bigcup_{\mu \in \bM_c}  \supp \mu \subset T\Gamma,
\end{equation}
where $\supp \mu$ denotes the support of $\mu$ in $T\Gamma$.
\end{Definition}

\medskip

As  a consequence of  Proposition \ref{wasse}, we have
\begin{Proposition}\label{mathersetareconvex}
 For any  $h \in H_1(\G,\R)$, $c \in H^1(\G,\R)$, the sets  of Mather measures $\bM^h$, $\bM_c$ are compact, convex subsets of $\bM$.
\end{Proposition}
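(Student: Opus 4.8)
The plan is to treat both families at once by recognizing each as the set of minimizers of a lower semicontinuous action functional whose sublevel sets are compact: convexity will then come for free from the linearity of the integral, and compactness will be a direct application of Proposition \ref{wasse} (in the $\bM_c$ case, of its variant for $\LL^\om$ already invoked after \eqref{defalpha}). The only structural point to get right is that each defining \emph{equality} can be converted into a \emph{sublevel inequality}, because the relevant minimum is attained.

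For $\bM^h$, I would first record that by Proposition \ref{proprho} the map $\rho$ is affine under convex combinations, so if $\mu_1,\mu_2\in\bM^h$ and $\la\in[0,1]$ then $\rho(\la\mu_1+(1-\la)\mu_2)=\la h+(1-\la)h=h$; combined with linearity of $\mu\mapsto\int\LL\,d\mu$ this gives $\int\LL\,d(\la\mu_1+(1-\la)\mu_2)=\beta(h)$, proving convexity. For compactness, the key observation is that on the closed set $\rho^{-1}(h)$ one has $\int\LL\,d\mu\ge\beta(h)$ by the very definition of $\beta$, so that
\[
\bM^h=\rho^{-1}(h)\cap\left\{\mu\in\bM:\int\LL\,d\mu\le\beta(h)\right\}=\rho^{-1}(h)\cap\mathbb K_{\beta(h)}.
\]
The second factor is compact by Proposition \ref{wasse}, and the first is closed because $\rho$ is continuous (Proposition \ref{proprho}); hence $\bM^h$, being a closed subset of a compact set, is compact.

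For $\bM_c$, I would use the last line of \eqref{defalpha}, which identifies $-\al(c)=\min_{\mu\in\bM}\int\LL^\om\,d\mu$ with $\om$ in the class $c$; thus $\bM_c$ is exactly the set of global minimizers over $\bM$ of $\mu\mapsto\int\LL^\om\,d\mu$. Convexity again follows from linearity of the integral. For compactness, since $-\al(c)$ is the global minimum, the equality constraint rewrites as a sublevel constraint,
\[
\bM_c=\left\{\mu\in\bM:\int\LL^\om\,d\mu\le-\al(c)\right\},
\]
and because $\LL^\om(e,q)=\LL(e,q)-\langle\om,q\,e\rangle$ retains the fiberwise superlinearity of $\LL$, the argument of Proposition \ref{wasse} applies verbatim to show this sublevel set is compact (precisely the compactness already used in the discussion of \eqref{defalpha}).

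I do not expect any genuine obstacle here: the entire content is that each Mather family is a level set which, thanks to the attainment of the relevant minimum, coincides with a compact sublevel set of a lower semicontinuous superlinear action. The single point requiring attention is the reduction of the defining equality to a sublevel inequality — for $\bM^h$ this needs the intersection with $\rho^{-1}(h)$ so that $\int\LL\,d\mu\ge\beta(h)$ holds, whereas for $\bM_c$ the inequality $\int\LL^\om\,d\mu\ge-\al(c)$ holds automatically on all of $\bM$. Everything else is routine.
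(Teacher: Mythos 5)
Your proof is correct and follows exactly the route the paper intends: the paper states Proposition \ref{mathersetareconvex} without proof, as ``a consequence of Proposition \ref{wasse}'', and your argument supplies precisely the missing details --- convexity from the affine property of $\rho$ (Proposition \ref{proprho}) and linearity of the action, and compactness by rewriting each defining equality as a sublevel constraint (legitimate since the relevant minimum is attained) and invoking the compactness of sublevels of $\LL$, respectively $\LL^\om$, together with closedness of $\rho^{-1}(h)$. No gaps.
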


\smallskip

Next proposition  will help clarify the relation between the two notions of Mather measures in Definitions \ref{cip} and \ref{ciop}. To state it, recall that, like any convex
function on a finite-dimensional space, $\be$ admits a subdifferential at each point $h\in H_1(\G,\R)$, \ie we can find $c\in  H^1(\G,\R)$ such that
$
\be(h')-\be(h)\geq \langle c,h'-h\rangle
$
for any $h\in H_1(\G,\R)$.
We will denote by $\partial \be(h)$ the set of $c\in H^1(\G,\R)$ that
are subdifferentials of $\be$ at $h$.
Similarly, we will denote by $\partial \al(c)$ the set of subdifferentials of $\al$ at $c$.\\
Fenchel's duality implies an easy characterization of subdifferentials (see for example \cite[Proposition 3.3.3]{Sorrentinobook}):
\begin{equation}\label{subdifferential}
c\in \partial \be(h)  \quad \Longleftrightarrow \quad  h\in \partial \al(c)  \quad   \Longleftrightarrow \quad
\langle c,h\rangle=\alpha(c )+\be(h).
\end{equation}

\medskip
The next proposition can be proven as the corresponding ones in the classical Mather theory,  with obvious adaptations (we omit the proof, see for example \cite[Proposition 3.3.4]{Sorrentinobook}).
\begin{Proposition}\label{proprelationalphabeta} \hfill
\begin{itemize}
\item[{\bf(i)}] $\mu \in \bM$ is a Mather measure with homology $h$ if and only if \[  \mu \in \bM_c  \txt{for any $c \in \partial \be(h)$.}\]
\item[{\bf (ii)}] For every $c \in H^1(\G,\R)$
    \[ \partial \alpha(c) =\{\rho(\mu) \mid \mu \in \bM_c\}.\]
\end{itemize}
\end{Proposition}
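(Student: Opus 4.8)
The plan is to reduce everything to a single bookkeeping identity together with Fenchel duality. First I would record that, for any closed measure $\mu\in\bM$ and any representative $\om$ of a class $c\in H^1(\G,\R)$, the definition of $\LL^\om$ together with Remark \ref{golpetrump}{\bf(ii)} give
\[\int\LL^\om\,d\mu=\int\LL\,d\mu-\int\om\,d\mu=\int\LL\,d\mu-\langle c,\rho(\mu)\rangle .\]
This shows that the $\om$--action and the action differ only by the linear term $\langle c,\rho(\mu)\rangle$, so the two notions of Mather measure are linked precisely through the pairing; throughout I would use the subdifferential dictionary \eqref{subdifferential}, namely $c\in\partial\be(h)\Leftrightarrow h\in\partial\al(c)\Leftrightarrow\langle c,h\rangle=\al(c)+\be(h)$.

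I would prove {\bf(ii)} first, as it is self--contained and then powers {\bf(i)}. For the inclusion $\{\rho(\mu)\mid\mu\in\bM_c\}\subseteq\partial\al(c)$, take $\mu\in\bM_c$ and set $h:=\rho(\mu)$: the identity and $\int\LL^\om\,d\mu=-\al(c)$ give $\int\LL\,d\mu=\langle c,h\rangle-\al(c)$, while $\int\LL\,d\mu\ge\be(h)$ (definition of $\be$) and $\be(h)\ge\langle c,h\rangle-\al(c)$ (definition of $\al$ as a maximum) sandwich this to $\int\LL\,d\mu=\be(h)$ and $\langle c,h\rangle=\al(c)+\be(h)$, i.e. $h\in\partial\al(c)$. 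For the reverse inclusion I would fix $h\in\partial\al(c)$, pick a minimizer $\mu\in\bM^h$ (which exists by Theorem \ref{act}{\bf(ii)}), and substitute $\rho(\mu)=h$, $\int\LL\,d\mu=\be(h)$ into the identity to obtain $\int\LL^\om\,d\mu=\be(h)-\langle c,h\rangle=-\al(c)$, so $\mu\in\bM_c$ with $\rho(\mu)=h$.

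For {\bf(i)}, the forward implication is the same computation in one line: if $\mu\in\bM^h$ (so $\rho(\mu)=h$) and $c\in\partial\be(h)$, the identity and \eqref{subdifferential} give $\int\LL^\om\,d\mu=\be(h)-\langle c,h\rangle=-\al(c)$, hence $\mu\in\bM_c$. For the converse I would invoke {\bf(ii)}: any $\mu\in\bM_c$ already satisfies $\int\LL\,d\mu=\be(\rho(\mu))$ with $\rho(\mu)\in\partial\al(c)$, so once the rotation vectors are matched, $\rho(\mu)=h$, one recovers $\int\LL\,d\mu=\be(h)$, i.e. $\mu\in\bM^h$. In other words, the precise content I would establish is $\bM^h=\bM_c\cap\rho^{-1}(h)$ for every $c\in\partial\be(h)$.

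The hard part is exactly this matching of rotation vectors. Membership in $\bM_c$ alone constrains $\rho(\mu)$ only to the face $\partial\al(c)$, which can be strictly larger than $\{h\}$ whenever $\be$ is affine along a segment through $h$ (equivalently, $\al$ is not strictly convex at $c$); this is the familiar fact that a single cohomology class may subdifferentiate $\be$ at many homology classes. Hence the converse in {\bf(i)} genuinely needs the constraint $\rho(\mu)=h$, and that is the only place requiring care. Everything else is the linear identity above plus the elementary convex duality \eqref{subdifferential}, and the sole analytic input is the existence of minimizers from Theorem \ref{act}.
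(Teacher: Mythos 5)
Your proof is correct and is essentially the argument the paper has in mind: the paper omits the proof entirely, deferring to \cite[Proposition 3.3.4]{Sorrentinobook}, and your combination of the linear identity $\int\LL^\om\,d\mu=\int\LL\,d\mu-\langle c,\rho(\mu)\rangle$ for closed measures, the duality dictionary \eqref{subdifferential}, and the existence of minimizers (Theorem \ref{act}) is exactly the classical Fenchel--duality proof, adapted to this setting. Your caveat is also well taken: as literally stated, the ``if'' direction of {\bf (i)} needs the side condition $\rho(\mu)=h$, since $\mu\in\bM_c$ only forces $\rho(\mu)\in\partial\al(c)$, which may be a nontrivial face of which $h$ is just one point; the precise formulation you actually prove, namely $\bM^h=\bM_c\cap\rho^{-1}(h)$ for every $c\in\partial\be(h)$, is the correct reading of the statement and matches the classical one, where the rotation-vector hypothesis is part of the assumption.
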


\smallskip

\begin{Corollary}\label{inclusionMsets}
If $c\in \partial \beta(h)$, then $\widetilde{\MM}^h \subseteq \widetilde{\MM}_c$. In particular:
$$
\widetilde{\MM}_c = \bigcup_{h\in \partial \alpha(c)} \widetilde{\MM}^h.
$$
\end{Corollary}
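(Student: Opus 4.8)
The plan is to derive both assertions directly from Proposition \ref{proprelationalphabeta}, which is the only substantial input needed. First I would prove the inclusion $\widetilde{\MM}^h \subseteq \widetilde{\MM}_c$ under the hypothesis $c \in \partial\beta(h)$. Unwinding the definitions, $\widetilde{\MM}^h = \bigcup_{\mu \in \bM^h} \supp\mu$ and $\widetilde{\MM}_c = \bigcup_{\mu \in \bM_c} \supp\mu$, so it suffices to show the containment of index sets $\bM^h \subseteq \bM_c$ whenever $c \in \partial\beta(h)$; the inclusion of the unions then follows immediately. But this containment is exactly Proposition \ref{proprelationalphabeta}\,{\bf(i)}: if $\mu$ is a Mather measure with homology $h$ (i.e.\ $\mu \in \bM^h$) then $\mu \in \bM_c$ for every $c \in \partial\beta(h)$. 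Hence each $\supp\mu$ with $\mu \in \bM^h$ appears among the supports indexing $\widetilde{\MM}_c$, giving the first claim.

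For the identity $\widetilde{\MM}_c = \bigcup_{h \in \partial\alpha(c)} \widetilde{\MM}^h$ I would argue by double inclusion. The inclusion $\supseteq$ is a consequence of the part just proved: by the Fenchel duality characterization \eqref{subdifferential}, $h \in \partial\alpha(c)$ is equivalent to $c \in \partial\beta(h)$, so for each such $h$ the first statement yields $\widetilde{\MM}^h \subseteq \widetilde{\MM}_c$, and taking the union over all $h \in \partial\alpha(c)$ preserves the inclusion.

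For the reverse inclusion $\subseteq$, I would take an arbitrary $\mu \in \bM_c$ and exhibit an $h \in \partial\alpha(c)$ with $\mu \in \bM^h$, so that $\supp\mu \subseteq \widetilde{\MM}^h \subseteq \bigcup_{h \in \partial\alpha(c)} \widetilde{\MM}^h$. The natural candidate is $h := \rho(\mu)$. By Proposition \ref{proprelationalphabeta}\,{\bf(ii)}, $\partial\alpha(c) = \{\rho(\mu') \mid \mu' \in \bM_c\}$, so indeed $h = \rho(\mu) \in \partial\alpha(c)$. It then remains to check that $\mu$ is a Mather measure with this homology $h$, i.e.\ $\mu \in \bM^{\rho(\mu)}$. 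This is where I expect the only real (though still routine) work: one uses \eqref{subdifferential} together with the closedness of $\mu$, writing $\int \LL\,d\mu = \int \LL^\om\,d\mu + \langle c, \rho(\mu)\rangle = -\alpha(c) + \langle c, h\rangle = \beta(h)$, where the middle equality uses $\mu \in \bM_c$ and Remark \ref{golpetrump}\,{\bf(ii)}, and the last uses the Fenchel equality $\langle c,h\rangle = \alpha(c)+\beta(h)$ valid since $h \in \partial\alpha(c)$. Since $\rho(\mu) = h$, this shows $\int\LL\,d\mu = \beta(\rho(\mu))$, which is precisely the condition $\mu \in \bM^h$. Collecting the two inclusions yields the stated equality.

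The main (and essentially the only) obstacle is this last verification $\mu \in \bM^{\rho(\mu)}$ for $\mu \in \bM_c$; everything else is a formal manipulation of unions and an appeal to the subdifferential dictionary. One could alternatively package this step by invoking Proposition \ref{proprelationalphabeta}\,{\bf(i)} in the direction that characterizes membership in $\bM^h$, avoiding the explicit computation, but the direct calculation above is transparent and self-contained.
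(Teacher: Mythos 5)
Your proof is correct and follows exactly the route the paper intends: the corollary is stated without proof as an immediate consequence of Proposition \ref{proprelationalphabeta} together with the Fenchel duality characterization \eqref{subdifferential}, which is precisely what you use. Your explicit verification that $\mu\in\bM_c$ implies $\mu\in\bM^{\rho(\mu)}$ (via $\int \LL\,d\mu = -\alpha(c)+\langle c,\rho(\mu)\rangle = \beta(\rho(\mu))$, using Remark \ref{golpetrump}\,{\bf(ii)}) correctly fills in the detail the paper leaves implicit.
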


\begin{Remark}
We will say that  $\mu$ is a Mather measure {\em tout court},  if it is a Mather measure for some cohomology $c$, or equivalently it is a Mather measure of homology $\rho(\mu)$.
\end{Remark}

\bigskip

\section{Properties  of Mather measures} \label{sec6}

\subsection{Structural properties and Mather's graph property}

 Exploiting the strict convexity of $\LL(e,\cdot)$, we can derive this first property of Mather measures, namely that they consist of a finite convex combinations of Dirac deltas, in particular each edge  appears at most once.
\begin{Proposition}\label{postmin}
The restriction of any  Mather measure to an edge of its support is concentrated on a point.
\end{Proposition}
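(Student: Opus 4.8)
The plan is to prove the statement by contradiction, exploiting the strict convexity of $\LL(e,\cdot)$ together with the closedness of the measure under a suitable rearrangement. Suppose that some Mather measure $\mu$ (say with cohomology $c$, and let $\om$ represent $c$) restricts on some edge $e_0 \in \supp_\EE \mu$ to a measure $\mu_{e_0}$ that is \emph{not} concentrated at a single point. Writing $\mu = \sum_{e} \la_e \mu_e$ as in \eqref{amuchina}, the idea is to replace $\mu_{e_0}$ by the Dirac delta concentrated at its barycenter $\bar q := \int_0^{+\infty} q \, d\mu_{e_0}$, keeping all the other fiber measures and all the weights $\la_e$ unchanged. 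Call the new measure $\nu$.

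The first key step is to verify that $\nu$ still lies in $\bM$, i.e. that it is closed. This is immediate from Remark \ref{golpetrump}{\bf(i)}: closedness is equivalent to $\partial \rho(\nu) = 0$, and by the defining formula \eqref{defrho} the $1$--chain $\rho(\mu)$ depends on each fiber only through its first moment $\int_0^{+\infty} q \, d\mu_e$. Since the barycenter of $\delta(e_0,\bar q)$ equals $\bar q$, the replacement leaves the first moment of the $e_0$--fiber (and hence $\rho$) unchanged, so $\rho(\nu) = \rho(\mu)$ and $\nu$ is closed with the same rotation vector. In particular $\int \om \, d\nu = \int \om \, d\mu$ by Remark \ref{golpetrump}{\bf(ii)}.

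The second key step is the strict-convexity comparison of the actions. Since $\mu_{e_0}$ is not a Dirac mass at $\bar q$, strict convexity of $q \mapsto \LL(e_0,q)$ gives the strict Jensen inequality
\[
\int_0^{+\infty} \LL(e_0,q)\, d\mu_{e_0} > \LL(e_0,\bar q).
\]
Because all other fibers are untouched and the weights agree, subtracting the common term $\int \om \, d\mu = \int \om \, d\nu$ yields
\[
\int \LL^\om \, d\nu < \int \LL^\om \, d\mu = -\alpha(c),
\]
which contradicts the fact that $-\alpha(c)$ is the minimum of the action $\int \LL^\om \, d\cdot$ over $\bM$ (recall \eqref{defalpha}). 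Hence every fiber restriction must be a single Dirac mass. The statement about each edge appearing at most once then follows because, once $\mu_e$ is a point mass, the convex combination $\sum_e \la_e \mu_e$ has exactly one atom per edge in its support.

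I expect the only genuinely delicate point to be bookkeeping rather than conceptual: one must make sure the barycenter $\bar q$ is finite (guaranteed by the finite-first-moment assumption defining $\mathbb P$) and that the strict inequality survives after integrating against $\om$ and recombining the fibers. The conceptual core — invariance of the rotation vector under passage to the barycenter plus strict convexity forcing a strict decrease of the action — is robust, and this is precisely the mechanism by which strict convexity of the Lagrangian rigidifies minimizers in the classical theory.
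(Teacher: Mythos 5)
Your proof is correct and follows essentially the same route as the paper's: replace fiber measures by Dirac masses at their barycenters, note that this preserves the rotation vector (hence closedness), and invoke Jensen's inequality with strict convexity of $\LL(e,\cdot)$ against minimality. The only cosmetic difference is that you modify a single fiber and argue by contradiction via strict Jensen, whereas the paper replaces all fibers simultaneously and concludes from the equality case.
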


\begin{proof}
Let $\mu = \sum_{e\in \EE} \la_e \, \mu_e$ be a Mather measure. We set
\[ \nu := \sum_{e \in \EE} \la_e \, \de \left(e,\int_0^{+\infty}  q \, d\mu_e \right ).\]
Thanks to the   convexity of $\LL(e,\cdot)$ for each $e\in \EE$,  we can  apply  Jensen inequality to  $\mu_e $  and  get
\begin{eqnarray*}
  \int \LL(e,q) \,d\mu &=& \sum_{e \in \EE} \la_e \, \int_0^{+\infty}   \LL (e,q)
\,d\mu_e \geq \sum_{e \in \EE} \la_e  \, \LL \left (e, \int_0^{+\infty} q \,
d\mu_e \right ) \\
   &=& \int \LL(e,q) \, d\nu.
\end{eqnarray*}
Observe that $\rho(\mu)= \rho(\nu)$; hence,  due to the strict convexity of $\LL(e,\cdot)$ for each $e\in \EE$ and the fact that $\mu$ is a Mather measure, we conclude that equality must prevail in the above formula, and this is possible if and only if $\mu=\nu$.
\end{proof}

\medskip

\smallskip

\begin{Proposition}\label{mathergraph2}
Let $c\in H^1(\G,\R)$ and $h\in  H_1(\G,\R)$.
\begin{itemize}
\item[{\bf (i)}] If $(f,q_1), (f, q_2) \in \widetilde{\mathcal M}_c$ (resp. $\widetilde{\mathcal M}^h$) for some $f\in \EE$, then $q_1=q_2$.
\item[ {\bf (ii)}] If $(f,q_1), (-f, q_2) \in \widetilde{\mathcal M}_c$ (resp. $\widetilde{\mathcal M}^h$) for some $f\in \EE$, then $q_1=q_2=0$ and $\alpha(c)=\min \alpha$.\\
\end{itemize}
\end{Proposition}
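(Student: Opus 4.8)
\emph{Plan.} I would work throughout with the cohomological Mather set $\widetilde{\MM}_c$; the statements for $\widetilde{\MM}^h$ then come for free, since $\partial\beta(h)\neq\emptyset$ (because $\beta$ is finite and convex on a finite--dimensional space) and, for any $c\in\partial\beta(h)$, Corollary \ref{inclusionMsets} gives $\widetilde{\MM}^h\subseteq\widetilde{\MM}_c$. The common first move for both items is an averaging step: if two points of $\widetilde{\MM}_c$ are given, they are supported by possibly different measures $\mu^{(1)},\mu^{(2)}\in\bM_c$, so I would pass to $\mu:=\frac12(\mu^{(1)}+\mu^{(2)})$, which is again in $\bM_c$ by convexity (Proposition \ref{mathersetareconvex}) and whose support contains both points. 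By Proposition \ref{postmin} the fibrewise restrictions $\mu_f,\mu_{-f}$ are single Dirac masses. Item {\bf (i)} is then immediate: if $(f,q_1),(f,q_2)\in\supp\mu$ while $\mu_f$ is a single Dirac, then necessarily $q_1=q_2$.

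For item {\bf (ii)} I would run an exchange argument exploiting strict convexity. Writing $\psi(q):=\LL^\om(f,q)$, which is strictly convex by Proposition \ref{swanlake}, the symmetry \eqref{lag2} gives $\LL^\om(-f,q_2)=\psi(-q_2)$, so the mass sitting on the pair $\{f,-f\}$ contributes $\lambda_f\,\psi(q_1)+\lambda_{-f}\,\psi(-q_2)$ to $\int\LL^\om\,d\mu$ and a net flow $N:=\lambda_f q_1-\lambda_{-f}q_2$ along $f$ to the rotation vector $\rho(\mu)$. Setting $\bar v:=N/(\lambda_f+\lambda_{-f})$ and applying Jensen's inequality to the two--point distribution $\{q_1,-q_2\}$ yields $\lambda_f\psi(q_1)+\lambda_{-f}\psi(-q_2)\geq(\lambda_f+\lambda_{-f})\,\psi(\bar v)$, with equality \emph{only if} $q_1=-q_2$, i.e.\ (as $q_1,q_2\geq0$) only if $q_1=q_2=0$. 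I would then define $\mu'$ by leaving $\mu$ unchanged away from $\{f,-f\}$ and replacing the pair by the single Dirac of mass $\lambda_f+\lambda_{-f}$ carrying the same net flow $N$ --- located at $(f,\bar v)$, at $(-f,-\bar v)$, or at the equilibrium $(f,0)$ according to the sign of $\bar v$. By construction $\mu'$ has total mass $1$ and $\rho(\mu')=\rho(\mu)$, hence $\partial\rho(\mu')=0$ and $\mu'\in\bM$ by Remark \ref{golpetrump}{\bf (i)}; here {\bf (G3)} ensures $\partial f\neq0$, so that matching the net flow $N$ is precisely what preserves closedness. If $(q_1,q_2)\neq(0,0)$ the inequality is strict, so $\int\LL^\om\,d\mu'<\int\LL^\om\,d\mu=-\alpha(c)$, contradicting \eqref{defalpha}. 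Therefore $q_1=q_2=0$.

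It remains to deduce $\alpha(c)=\min\alpha$. Once $q_1=q_2=0$, all the mass on $\{f,-f\}$ sits at the equilibrium point $(f,0)\sim(-f,0)$, so $\mu=m\,\delta(f,0)+(1-m)\nu$ with $m=\lambda_f+\lambda_{-f}>0$; since the rotation vector depends linearly on the measure (see \eqref{defrho}) and $\rho(\delta(f,0))=0$, the remainder $\nu$ satisfies $\partial\rho(\nu)=0$ and is therefore closed. As $\mu$ minimises $\int\LL^\om$ over $\bM$ and both $\delta(f,0),\nu\in\bM$, each summand is itself a minimiser, whence $\delta(f,0)\in\bM_c$. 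Because $\rho(\delta(f,0))=0$, Proposition \ref{proprelationalphabeta}{\bf (ii)} gives $0\in\partial\alpha(c)$, i.e.\ $c$ is a minimiser of the convex function $\alpha$, so $\alpha(c)=\min\alpha$.

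The main obstacle is the exchange step in item {\bf (ii)}: the trick is to use \eqref{lag2} to rewrite the contribution of $-f$ as the value of the \emph{same} strictly convex profile $\psi$ at the signed velocity $-q_2$, so that the whole pair is governed by one convex function and Jensen yields a strict gain unless both velocities vanish; and then to choose the replacement Dirac so that the net flow --- and hence closedness --- is exactly preserved. Everything else is a routine application of convexity of $\bM_c$ (Proposition \ref{mathersetareconvex}), Proposition \ref{postmin}, and the Fenchel duality already recorded in Proposition \ref{proprelationalphabeta} and Corollary \ref{inclusionMsets}.
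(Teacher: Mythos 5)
Your proposal is correct and follows essentially the same route as the paper's proof: averaging two Mather measures and invoking Proposition \ref{postmin} for item \textbf{(i)}, and for item \textbf{(ii)} the same exchange argument --- replace the pair of Diracs on $\{f,-f\}$ by a single Dirac carrying the same net flow (hence the same rotation vector, hence closed), and use \eqref{lag2} plus strict convexity to get a strict drop in action unless $q_1=q_2=0$ --- followed by $\delta(f,0)\in\bM_c$, $0\in\partial\alpha(c)$ and convexity of $\alpha$. The only differences are cosmetic: you handle the sign of the averaged velocity $\bar v$ by a case distinction where the paper normalizes $\lambda_1 q_1\geq\lambda_2 q_2$, and you spell out the splitting argument showing $\delta(f,0)\in\bM_c$, which the paper leaves implicit.
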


\begin{proof}
Since, by Corollary \ref{inclusionMsets}, $\widetilde \MM^h$ is contained in some  $\widetilde \MM_c$, then it suffices to prove the property for the latter.\\
Let $(f,q_1)$, $(f,q_2) \in \widetilde \MM_c$; then, by Proposition \ref{postmin} there are two Mather measures  $\mu= \sum_{e \in \EE} \la_e \, \mu_e$, $\nu= \sum_{e \in \EE} \tau_e \nu_e$ in $\bM_c$ such that
\[ \la_{f} >0,\; \tau_{f} >0 \qquad {\rm and} \qquad \mu_{f}= \de(f,q_1), \; \nu_{f}=\de (f,q_2).\]
Due to the convexity of $\bM_c$ (see Proposition \ref{mathersetareconvex}), we have that $\frac 12 \, \mu + \frac 12 \, \nu$ is in $\bM_c$,
and the restriction of it on $f$ is  a convex combination with positive coefficients of $\de(f,q_1)$ and $\de(f,q_2)$. We then derive, again from  Proposition \ref{postmin}, that $q_1=q_2$, which concludes the  proof of item {(\bf i)}.

We proceed by proving {\bf (ii)}.
Let $(f,q_1)$, $(-f,q_2)\in\widetilde \MM_c$; then, there exists $\mu \in \bM_c$ such that  $f,-f \in {\rm supp}_\EE\, \mu$; in fact, by Definition \ref{ciop}, there exist $\mu_1,\mu_2\in \bM_c$ such that $(f,q_1)\in \supp \mu_1$ and
$(-f,q_2)\in \supp \mu_2$, hence it suffices to consider $\mu= \frac{1}{2} \mu_1 + \frac{1}{2}\mu_2$, which still belongs to $\bM_c$ (due to convexity, see Proposition \ref{mathersetareconvex}).

Let us define
$$
\tilde{\mu} := \frac{1}{1-(\lambda_1 + \lambda_2)} \big( \mu - \lambda_1 \delta(f,q_1) - \lambda_2 \delta(-f,q_2)\big)
$$
with $\lambda_1, \lambda_2 \in (0,1)$, $q_1, q_2\geq 0$, so that  $\mu$ can be written as
$$
\mu =  \lambda_1 \delta(f, q_1) + \lambda_2 \de (-f, q_2) + (1-\lambda_1 -\lambda_2) \tilde{\mu}.
$$
Note that $\pm f \not \in \supp_\EE \tilde{\mu}$ because of Proposition \ref{postmin}. \\
Assume, without any loss of generality, that ${\lambda_1}{q_1} \geq {\lambda_2}{q_2}$ (otherwise, invert the roles of $f$ and $-f$) and
define
\begin{equation}\label{convexcombinationS}
\ov q:= \frac{\lambda_1q_1 - {\lambda_2q_2}}{\lambda_1 + \lambda_2} =
\frac{\lambda_1}{\lambda_1 + \lambda_2}  q_1 +
\frac{\lambda_2}{\lambda_1 + \lambda_2} (- q_2) \geq 0.
\end{equation}
Consider the new measure
$$
\nu := (\lambda_1 + \lambda_2) \;\delta (f, \ov q) + (1-(\lambda_1 + \lambda_2))  \tilde{\mu}.
$$
Clearly, $\nu$ is  a probability measure and it is also closed; in fact:
\begin{eqnarray*}
\rho\big( (\lambda_1+\lambda_2)\; \de(f, \ov q ) \big) &=&  \big({\lambda_1 + \lambda_2}\big){\ov q} \, f \;=\;
\big(
\lambda_1q_1 - \lambda_2q_2
\big)\, f \\
&=& \rho\big( \lambda_1 \delta(f, q_1) + \lambda_2 \de (-f, q_2) \big),
\end{eqnarray*}
hence, $\rho(\nu)=\rho(\mu)$  is a $1$-cycle, which implies that $\nu$ is closed (see Remark \ref{golpetrump} {\bf (i)}).

In order to get a contradiction, we want to prove that the action of $\nu$ is less than the action of $\mu$, thus contradicting minimality of $\mu$.
In fact:
\begin{eqnarray} \label{differenzaazione1}
\int \LL \,d\nu  -  \int \LL \,d\mu &=&
 ({\lambda_1+\lambda_2}) \, \LL(f,\ov q) -
\lambda_1  \, \LL(f, q_1) - \lambda_2\, \LL({-f}, q_2) \nonumber \\
&=&({\lambda_1+\lambda_2})\,  \left( {\LL}(f,\ov q) -
\frac{\lambda_1}{\lambda_1+\lambda_2}  \, {\LL}(f, q_1) - \frac{\lambda_1}{\lambda_1+\lambda_2}\, {\LL}({f}, -q_2)
\right)
\nonumber \\
&\leq&0, \end{eqnarray}
where in the last inequality we have used  the  convexity of ${\LL}(f,\cdot)$; taking into account that ${\LL}(f,\cdot)$ is in addition strictly convex, we see that a strict inequality prevails in \eqref{differenzaazione1}, leading  to a contradiction, unless
$$
\ov q=q_1 = -q_2 \qquad \Longleftrightarrow \qquad q_1=q_2=0.
$$
The property that $\alpha(c)=\min \alpha$ follows from the fact that $\delta(f,0)$ belongs to $\bM_c$, hence $0 \in \partial \alpha(c)$ (see Proposition \ref{proprelationalphabeta} (ii)). Being $\alpha$ convex implies that $\alpha(c)$ is the minimum of $\alpha$.
\end{proof}

We can now derive a central property that can be read as an instance of  the celebrated {\em Mather's graph theorem} (see \cite[Theorem 2]{Mather91})
in the graph setting\footnote {Ironically, the term {\it graph}  appearing twice in this  sentence, is used with two completely distinct meanings.}.\\
 To state it more precisely, let us introduce the {\it projection} $\pi_\EE: T\G \to \EE$ defined as
\[\pi_\EE(e,q) := \left \{\begin{array}{cc}
                   e & \txt{if $q >0$} \\
                   \{e,-e\} & \txt{if $q =0$.}
                 \end{array} \right . \]

                 \smallskip

\begin{Remark}
Observe that the projection $\pi_\EE$ that we have defined is multivalued at some points: this is needed in order to cope with the fact that the elements $(e,0)$, $(-e,0)$ are identified in $T\G$, for any $e \in \EE$.\\
Alternatively, one could consider
$\pi_{\EE^+}: T\G \to \EE^+$,
denoting the projection on a given orientation $\EE^+$ of the graph (namely, $\pi_{\EE^+}(\pm e, q) = e$ for any $e\in\EE^+$). In the light of Proposition \ref{mathergraph2}, the graph property in Corollary \ref{ironic} continues to hold with such a projection and all related results  can be suitably restated.\\
\end{Remark}

\begin{Corollary}\label{ironic} {\bf (Mather graph property)} \, The restriction of $\pi_\EE$ to $\widetilde \MM_c$ and $\widetilde \MM^h$ is injective for every $c \in H^1(\G,\R)$, $h \in H_1(\G,\R)$.
\end{Corollary}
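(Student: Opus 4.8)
The plan is to deduce the injectivity of $\pi_\EE$ restricted to $\widetilde{\MM}_c$ (and $\widetilde{\MM}^h$) directly from Proposition \ref{mathergraph2}, which already does all the analytic work. Since by Corollary \ref{inclusionMsets} every $\widetilde{\MM}^h$ is contained in some $\widetilde{\MM}_c$, it suffices to treat the cohomological case; the homological one follows immediately.

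First I would unpack what injectivity of $\pi_\EE$ means given its multivalued definition. Two points of $\widetilde{\MM}_c$, say $(e_1,q_1)$ and $(e_2,q_2)$, are sent to the \emph{same} value of $\pi_\EE$ precisely in one of two situations: either both speeds are positive and $e_1=e_2$, or both speeds vanish and $\{e_1,-e_1\}=\{e_2,-e_2\}$. I would phrase injectivity as the statement that whenever $\pi_\EE(e_1,q_1)=\pi_\EE(e_2,q_2)$, the two points coincide \emph{in $T\G$} (recalling that $(e,0)$ and $(-e,0)$ are identified).

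The core of the argument is then a short case analysis against Proposition \ref{mathergraph2}. In the first case, $q_1,q_2>0$ and $e_1=e_2=:f$; then $(f,q_1),(f,q_2)\in\widetilde{\MM}_c$, and part {\bf (i)} of Proposition \ref{mathergraph2} forces $q_1=q_2$, so the two points are literally equal. In the second case both speeds vanish and $e_2\in\{e_1,-e_1\}$; if $e_2=e_1$ the points are equal, while if $e_2=-e_1$ we have $(e_1,0),(-e_1,0)\in\widetilde{\MM}_c$, and part {\bf (ii)} of Proposition \ref{mathergraph2} applies (giving $q_1=q_2=0$, which we already knew), but the key point is that $(e_1,0)$ and $(-e_1,0)$ are the \emph{same} point of $T\G$ by the identification $(e,0)\sim(-e,0)$ in Definition \ref{tgbundle}. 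Thus in every case the two points coincide in $T\G$, which is exactly injectivity. One must also rule out the mixed situation where $\pi_\EE$ produces equal values from points with one zero and one positive speed: this cannot occur, since $\pi_\EE(e,q)$ is a single edge when $q>0$ and a two-element set when $q=0$, so the outputs have different cardinalities and can never agree.

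I expect no genuine obstacle here, since the substantive content has been isolated in Proposition \ref{mathergraph2}; the only point requiring care is a clean bookkeeping of the multivalued projection so that the case $e_2=-e_1$, $q_1=q_2=0$ is correctly read as a single point of $T\G$ rather than as a failure of injectivity. The remark preceding the corollary already flags precisely this subtlety (that the multivaluedness of $\pi_\EE$ is introduced to accommodate the identification $(e,0)\sim(-e,0)$), so the proof is essentially a matter of spelling out that the definitions were set up to make the statement hold.
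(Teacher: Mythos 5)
Your proposal is correct and follows essentially the same route as the paper: reduce $\widetilde\MM^h$ to some $\widetilde\MM_c$ via Corollary \ref{inclusionMsets}, then invoke Proposition \ref{mathergraph2}. The paper's proof cites only part {\bf (i)} and leaves the zero-speed and mixed cases implicit; your more careful bookkeeping of the multivalued projection (the identification $(e,0)\sim(-e,0)$ and the cardinality argument) just spells out what the paper takes for granted.
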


\begin{proof}
 Since, by Corollary \ref{inclusionMsets}, $\widetilde \MM^h$ is contained in some  $\widetilde \MM_c$, then it suffices to prove the property for the latter.
 The result then follows from Proposition \ref{mathergraph2} {\bf (i)}.
\end{proof}

\medskip

\begin{Remark}\label{ironicbis}
It follows from Corollary \ref{ironic} that for any $c \in H^1(\G,\R)$
$$\big( \pi_{\EE}| \widetilde \MM_c \big)^{-1}:   \pi_{\EE}\big(\widetilde \MM_c\big) \longrightarrow \widetilde \MM_c
$$
is a well-defined map. In Section \ref{WKAM}  we will describe this function more explicitly (see Theorem \ref{proHJomega}).  \\
\end{Remark}

Next result is an important step in our analysis. It puts in relation, via Proposition \ref{postmin}, Mather and occupation measures.

\begin{Theorem}\label{premamather} A closed probability  measure, whose restriction on any edge is concentrated on a point, is a convex combination of occupation  measures based on circuits.
\end{Theorem}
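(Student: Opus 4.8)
The plan is to show that a closed measure $\mu$ concentrated on at most one point per edge can be realized as a finite convex combination of occupation measures of circuits. By hypothesis $\mu = \sum_{e\in\EE} \la_e\,\de(e,q_e)$ for some speeds $q_e \geq 0$, where the sum runs over $\supp_\EE\mu$. The closedness translates, via Remark \ref{golpetrump} {\bf (i)}, into the condition $\partial\rho(\mu)=0$, where $\rho(\mu) = \sum_e \la_e q_e\, e$ is a $1$-cycle. The idea is to decompose this $1$-cycle into circuits and then reassemble the corresponding occupation measures, being careful about the edges carrying zero speed, since those contribute to $\mu$ but not to $\rho(\mu)$.

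First I would separate the edges of $\supp_\EE\mu$ into those with $q_e>0$ and those with $q_e=0$. For the positive-speed part, the $1$-chain $\sum_{e:\,q_e>0}\la_e q_e\, e$ satisfies Kirchhoff's law \eqref{cincin} (it is the $1$-cycle $\rho(\mu)$, once we note that zero-speed edges do not contribute). A standard fact in graph homology is that a $1$-cycle with nonnegative coefficients on an orientation, satisfying the flow-conservation law at each vertex, can be written as a nonnegative combination of circuits (one peels off a circuit by following outgoing edges until a vertex repeats, subtracts the minimal coefficient around that circuit, and repeats, with the total weight strictly decreasing at each step since the graph is finite). This gives $\rho(\mu) = \sum_k c_k [\si_k]$ with $c_k>0$ and $\si_k$ circuits, each circuit lifting to a parametrized circuit whose speeds match the $q_e$ on its support. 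By Remark \ref{rotvectoroccupmeasure}, each such parametrized circuit $\si_k$ yields an occupation measure $\mu_{\si_k}$ with $\rho(\mu_{\si_k}) = [\si_k]/T_{\si_k}$, and one checks that suitable convex weights reproduce the positive-speed part of $\mu$ exactly (the total times $T_{\si_k}$ are determined by the speeds along the circuit, and matching $\la_e$ to $c_k$ fixes the weights).

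Next I would handle the zero-speed edges. By Proposition \ref{mathergraph2} {\bf (ii)} applied in spirit, or more directly by Remark following Proposition \ref{capizzi}, a zero-speed contribution $\de(e,0)$ is itself a closed occupation measure: it is the occupation measure of the equilibrium circuit $(e,-e)$ based on $e$ with vanishing speed (see the Remark after the occupation measure definition, item {\bf (ii)}). Since $(e,0)\sim(-e,0)$ in $T\G$, each zero-speed atom $\la_e\,\de(e,0)$ is already a scalar multiple of the occupation measure of an equilibrium circuit. Thus the zero-speed part of $\mu$ is automatically a nonnegative combination of occupation measures supported on circuits (the equilibrium circuits being the degenerate circuits $(e,-e)$).

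Combining the two parts, $\mu$ is a nonnegative combination of occupation measures of circuits; since $\mu$ is a probability measure and each $\mu_{\si_k}$ is a probability measure, the coefficients sum to $1$, making it a genuine convex combination. The main obstacle I expect is the bookkeeping in the decomposition of the $1$-cycle: one must verify that peeling off circuits can be done so that the speeds (hence the total times $T_{\si_k}$, via $T_i = 1/q_i$) are globally consistent with the single value $q_e$ that $\mu$ assigns to each edge, and that the convex weights work out to recover exactly the $\la_e$. The delicate point is that the same edge may appear in several circuits of the decomposition, so the weight $\la_e$ must be apportioned among them in a way compatible with all the total-time normalizations simultaneously; resolving this requires choosing the circuit decomposition and the parametrization weights in tandem rather than independently.
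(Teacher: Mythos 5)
Your strategy (conservation law, circuit extraction, reassembly, equilibrium circuits for the zero--speed atoms) is close in spirit to the paper's proof, but as written it has a genuine gap, and it is not the one you flagged. You decompose the $1$--cycle $\rho(\mu)\in \Cf_1(\G,\R)$; however, in $\Cf_1(\G,\R)$ the reversed edge $-e$ \emph{is} the additive inverse of $e$, so the coefficient carried by $e$ in $\rho(\mu)$ is only the \emph{net} flux $\la_e q_e-\la_{-e}q_{-e}$: any back-and-forth circulation is invisible to $\rho(\mu)$. Concretely, take $\mu=\frac12\de(e,1)+\frac12\de(-e,1)$. Its restriction to each edge is a single Dirac mass, it is closed (indeed $\rho(\mu)=\frac12 e-\frac12 e=0$), and the theorem applies to it: $\mu$ is precisely the occupation measure of the parametrized circuit $((e,1,1),(-e,1,1))$. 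Your procedure, however, decomposes $\rho(\mu)=0$ into the empty family of circuits and reassembles nothing, so it cannot recover $\mu$; the same loss occurs whenever $\supp_\EE\mu$ contains a pair $\{e,-e\}$ with positive speeds. The cure is to run the peeling argument on the \emph{gross} fluxes $\la_e q_e\geq 0$, indexed by directed edges, using closedness in the form $\sum_{e\in\EE_x}\la_e q_e=\sum_{e\in-\EE_x}\la_e q_e$ for every vertex $x$ --- this is exactly relation \eqref{mamath1} in the paper's proof, which is deliberately stated at the level of directed edges rather than homology classes. The directed peeling may then return two--edge circuits of the form $(e,-e)$, and these recover the circulation; this is how the paper proceeds.

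By contrast, the point you single out as the main obstacle is not delicate, and no ``tandem'' choice is needed: the circuit decomposition and the weights can be chosen independently and explicitly. Suppose (after the correction above) you have $\la_e q_e=\sum_{k:\,e\in\si_k}c_k$ with $c_k>0$ and $\si_k$ circuits through the positive--speed edges of $\supp_\EE\mu$; parametrize each $\si_k$ with the speeds $q_e$ prescribed by $\mu$, so its total time is $T_k=\sum_{e\in\si_k}1/q_e$, and set $\theta_k:=c_kT_k$. Then
\begin{equation*}
\sum_k\theta_k\,\mu_{\si_k}\;=\;\sum_k c_k\sum_{e\in\si_k}\tfrac1{q_e}\,\de(e,q_e)\;=\;\sum_{e}\tfrac1{q_e}\Bigl(\sum_{k:\,e\in\si_k}c_k\Bigr)\de(e,q_e)\;=\;\sum_e\la_e\,\de(e,q_e),
\end{equation*}
the sums over $e$ running over the positive--speed edges; the same computation gives $\sum_k\theta_k=\sum_e\la_e$, so after adding the zero--speed atoms $\la_f\,\de(f,0)$ (each an occupation measure of an equilibrium circuit, as you correctly observe) the weights sum to $1$. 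Note that the paper avoids even this computation by peeling at the level of measures rather than flows: it extracts one circuit $\xi$ from $\supp_\EE\mu$ by following edges via \eqref{mamath1}, subtracts the multiple $\la\,\mu_\xi$ of its occupation measure with $\la$ chosen so that at least one coefficient of the remainder vanishes, checks that the remainder is again a closed probability measure with Dirac restrictions, and concludes by induction on $|\supp_\EE\mu|$.
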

\begin{proof}  Let
\begin{equation}\label{mamath0}
 \mu = \sum_{e\in \EE} \la_e \, \de(e,q_e)
\end{equation}
 with $\la_e\geq 0$ and $\sum \la_e =1$, be a measure as
indicated in the statement.  We first assume that $q_e \neq 0$ for any $e$.  We argue  by finite induction on  the cardinality  of $\supp_\EE \mu$ indicated
 by $|\supp_\EE\mu|$.   By taking the function which is equal to $1$ at a given vertex $x$
and $0$ elsewhere,  and exploiting that $\mu$ is closed, we deduce that
the relation
\begin{equation}\label{mamath1}
 \sum_{ e \in \EE_x} \la_e \, q_e = \sum_{ e \in - \EE_x } \la_e \,
q_e \qquad \forall\; x\in \VV.
\end{equation}
 If $|\supp_\EE \mu| =2$, set   $\supp_\EE \mu= \{e,f\}$. By applying \eqref{mamath1} to $x=\oo(e)$, $x= \tt(e)$, we realize  that   $(e,f)$   makes up a circuit and
 \[\la \, q_e= (1-\la) \, q_f \txt{for some $\la \in (0,1)$.}\]
   This implies that
  \[\la = \frac{q_f}{q_e+q_f}=  \frac 1{q_e} \, \frac{q_e \,q_f}{q_e+q_f}=
  \frac {1/q_e}{\frac 1{q_e} + \frac 1{q_f}} \quad \hbox{and} \quad 1-\la= \frac {1/q_f}{\frac 1{q_e} + \frac 1{q_f}}.\]
  This implies that $\mu$ is  the occupation measure corresponding to the parameterized circuit $((e,q_e,1/q_e), (f, q_f,,1/q_f))$.

 Let us  now assume the assertion true for measures with support of
cardinality less than a given $M$, and assume  $ |\supp_\EE \mu|=M \geq 3 $.  Starting by any edge $e \in \supp_\EE \mu$, we choose one of the edges $f \in \supp_\EE \mu$
  with \[ \tt(e)= \oo(f)\]
  and we call it $\pi_1(e)$.
 This choice is possible,  for any  initial $e$,  because of \eqref{mamath1}. We iterate the procedure starting from $\pi_1(e)$ to define $\pi_2(e)$. Taking again into account \eqref{mamath1}, we see that we can go on until we reach $\pi_k(e)$ with
 \[\tt(\pi_k(e))= \oo(\pi_h(e)) \qquad\hbox{for some $h \leq k$.}\]
 The edges
 \[ \{ \pi_h(e),\;\pi_{h+1}(e),\; \cdots,\; \pi_k (e)\}\]
 make up a circuit contained in  $\supp_\EE \mu$. We set  $M' = k+1-h$,
\[e_i=  \pi_{h+i-1}(e),  \quad \la_i=\la_{e_i} \quad q_i=q_{e_i} \qquad\hbox{for $i=1, \cdots, M'$}\]
and  consider the parametrized circuit $\xi= \left (e_i,q_i, 1/q_i \right )_{i=1}^{M'}$.  The  associated occupation   measure is
 \begin{equation}\label{mamath001}
  \mu_\xi=  \frac 1{T_\xi} \sum_{i=1}^{M'} \frac 1{q_i} \,
 \de(e_i,q_i),
 \end{equation}
where  $T_\xi= \left (\sum_{i=1}^{M'} \frac 1{q_i} \right )$.   We distinguish two cases:

\begin{itemize}
\item[--]  If $M=M'$ we show that $\mu=\mu_\xi$, which proves the claim.
In fact, in this case   for any vertex $x$ of the graph there is an
alternative: either  no edge in $\supp_\EE \mu$ is incident on it or
there are exactly two incident edges, one with $x$ as initial point
and the other with $x$ as terminal point. By applying
\eqref{mamath1} we deduce
\begin{equation}\label{mamath002}
\la_i \, q_i = \la_j \,q_j=:A  \txt{for any
$i, \, j \in\{1,\cdots,M'\}$.}
\end{equation}
 This implies  that $\la_i = \frac A{q_i}$ for
any $i$, and, since $\sum_i \la_i=1$ we obtain
\[A = \left ( \sum_i \frac 1{q_i} \right )^{-1} = \frac 1T_\xi.\]
By exploiting the above relation plus \eqref{mamath0}, \eqref{mamath001}, \eqref{mamath002}   we obtain
\begin{eqnarray*}
\mu_\xi &=& \frac{1}{T_\xi}\sum_{i=1}^{M} \frac 1{ q_i} \, \de(e_i, q_i) = \sum_{i=1}^{M} \frac 1{T_\xi \, q_i} \, \de(e_i, q_i) \\
&=& \sum_{i=1}^{M} \frac A{ q_i} \, \de(e_i, q_i)
= \sum_{i=1}^{M} \la_i \, \de(e_i, q_i) = \sum_{e\in \supp_\EE \mu} \lambda_e \delta(e, q_e) = \mu.
\end{eqnarray*}
\item[--] Let us assume now  that $M' < M$ and define
\[ \la =   T \, \min_i q_i \, \la_i.\]
Observe that
\[  \frac \la{T q_i} \leq \la_i\txt{for any $i\in
\{1, \cdots, M'\}$}\] and consequently
\[ \la = \la \, \sum_{i}\frac 1{T q_i} \leq
\sum_{i} \la_i < 1 ,\] where the rightmost strict inequality
comes from the fact that $M' <M$.  Let us define the following probability measure
\[ \nu = \frac 1{1- \la} \,   \left [\sum_{i=1}^{M'}  \left ( \la_i -  \la   \, \frac 1{T q_i} \right )  \, \de(e_i,
q_i) + \sum_{e \not\in \supp_\EE \mu_\xi} \la_e \,\de(e,q_e) \right ].\] This is  actually a probability measure since
\[
 \sum_{i}  \left ( \la_i -  \la  \, \frac 1{T q_i} \right
) + \sum_{e \not\in \supp_\EE \mu_\xi} \la_e =
 \sum_{e \in \supp_\EE \mu} \la_e - \la \, \frac 1T \, \sum_{i} \frac
 1{q_i} = 1 - \la. \]
Moreover
\begin{eqnarray}
&&\la \, \mu_\xi + (1- \la) \, \nu  \label{mamath3}\\
&=& \la \, \left  [ \frac 1T \, \sum_i \frac 1{q_i} \, \de(e_i, q_i)  \right ] +  \sum_i  \left ( \la_i -  \la   \, \frac 1{T q_i} \right )  \,
 \de(e_i,q_i) + \sum_{e \not\in \supp_\EE \mu_\xi} \la_e \,\de(e,q_e)
 \nonumber\\
 &=& \sum_{e \in \supp_\EE \mu} \la_e \, \de(e,q_e)= \mu. \nonumber
\end{eqnarray}
We see from \eqref{mamath3} that $\nu$ is closed since both $\mu$
and $\mu_\xi$ are closed. In addition, some of the coefficients
$\la_i -  \la \,  \frac 1{T q_i}$ must vanish by the very
definition of $\la$.  The support of $\nu$ has then cardinality less
than $M$, and by inductive assumption $\nu$ is the convex
combination of  occupation measures based on circuits. The same holds true for
$\mu$ in force of \eqref{mamath3}.
\end{itemize}
 Let us now discuss the case in which some of the $q_e$'s vanish. Let  $\mu$ be as in  \eqref{mamath0}  and define
 \[ E= \{ e \in \supp_\EE \mu \mid q_e >0\}, \quad F= \{ f \in \supp_\EE \mu \mid q_f =0\}, \quad \la_F= \sum_{f \in F} \la_f.\]
  If $E = \emptyset$, then $\mu = \de(e,0)$ for a suitable  $e \in \EE$ and  this measure is  supported by the equilibrium  circuit based on $e$, so that the assertion is proved. We then assume  that both $E$ and $F$ are nonempty.
 We consider  the probability measure
 \[\nu =   \, \sum_{e \in E} \frac{\la_e}{1- \la_F} \, \de(e,q_e)\]
   and derive
  \[\mu = (1 - \la_F) \, \nu + \sum_{f \in F}  \la_f \, \de(f,0).\]
  By the first part of the proof there exist occupation  measures  $\mu_{\xi_i}$ corresponding to circuits $\xi_i$ with
 \[ \nu = \sum_i \si_i \, \mu_{\xi_i} \qquad \si_i >0, \, \sum_i \si_i =1.\]
 Summing, up we have
 \[ \mu = (1 - \la_F) \sum_i \si_i \, \nu_i + \sum_{f \in F}  \la_f \, \de(f,0) .\]
  This concludes the proof.
\end{proof}

\medskip

\subsection{Irreducible  Mather measures}    A point in a convex set is called {\em extremal} if  it cannot be obtained as convex combination of two distinct elements of the set. \\

 A closed probability measure  is said to be  {\it irreducible}   if it is extremal in $\bM$.
\smallskip

\begin{Proposition}\label{panza} A Mather  measure  is irreducible  if and only if it  is an occupation measure corresponding to a parametrized circuit.
\end{Proposition}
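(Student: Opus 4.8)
The plan is to derive both implications from the two structural results already in hand, namely Proposition \ref{postmin} and Theorem \ref{premamather}. For the ``only if'' direction, suppose $\mu$ is a Mather measure which is irreducible, i.e. extremal in $\bM$. By Proposition \ref{postmin} its restriction to every edge of its support is a Dirac mass, so $\mu=\sum_e \la_e\,\de(e,q_e)$; being closed, it satisfies the hypothesis of Theorem \ref{premamather}, which presents it as a convex combination $\mu=\sum_{j=1}^N\sigma_j\,\mu_{\xi_j}$ of occupation measures based on circuits, with $\sigma_j>0$ and $\sum_j\sigma_j=1$. Each $\mu_{\xi_j}$ lies in $\bM$, and since $\bM$ is convex (Proposition \ref{closedclosed}) so does $\nu:=\frac{1}{1-\sigma_1}\sum_{j\ge 2}\sigma_j\mu_{\xi_j}$. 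Writing $\mu=\sigma_1\mu_{\xi_1}+(1-\sigma_1)\nu$ and invoking extremality forces either $N=1$ or $\mu_{\xi_1}=\nu=\mu$; in all cases $\mu=\mu_{\xi_1}$ is an occupation measure of a circuit.

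For the ``if'' direction I would show that \emph{any} occupation measure $\mu_\xi$ of a parametrized circuit is extremal in $\bM$ (being a Mather measure is not even needed here), distinguishing two cases via Proposition \ref{capizzi}. If $\xi$ is an equilibrium circuit, then $\mu_\xi=\de(e,0)$ is a single Dirac mass on $T\G$ because of the identification $(e,0)\sim(-e,0)$, hence trivially extremal. Otherwise all speeds are positive, the edges $e_1,\dots,e_M$ of the circuit are pairwise distinct and non-opposite, and $\mu_\xi=\sum_{i=1}^M \frac{T_i}{T_\xi}\de(e_i,q_i)$ is purely atomic. Suppose $\mu_\xi=t\nu_1+(1-t)\nu_2$ with $t\in(0,1)$ and $\nu_1,\nu_2\in\bM$. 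Since $t\nu_k\le\mu_\xi$, each $\nu_k$ is supported on the atoms of $\mu_\xi$, say $\nu_k=\sum_i \beta_i^{(k)}\de(e_i,q_i)$. Imposing closedness through \eqref{defrho}, one has $\rho(\nu_k)=\sum_i\beta_i^{(k)}q_i\,e_i$, and $\partial\rho(\nu_k)=0$ yields, vertex by vertex along the circuit, the relations $\beta_i^{(k)}q_i=\beta_{i+1}^{(k)}q_{i+1}$ (indices read cyclically). Hence $\beta_i^{(k)}q_i$ is constant in $i$, so $\beta_i^{(k)}=A_k/q_i=A_k T_i$; normalizing $\sum_i\beta_i^{(k)}=1$ gives $A_k=1/T_\xi$ and therefore $\nu_k=\mu_\xi$, proving extremality.

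The main obstacle is the ``if'' direction: one must verify that the closedness constraint on a measure carried by a simple cycle leaves no freedom in the weights. The decisive point is the computation of $\partial\rho(\nu_k)$, which uses that the circuit is simple, so at each of its vertices exactly one edge of the circuit enters and one leaves; this collapses the Kirchhoff condition \eqref{cincin} to the chain of equalities $\beta_i^{(k)}q_i=\beta_{i+1}^{(k)}q_{i+1}$ and pins down the $\beta_i^{(k)}$ uniquely. Some care is also required to treat the equilibrium-circuit case separately, since there the occupation measure degenerates to a single Dirac precisely because of the identification $(e,0)\sim(-e,0)$ in $T\G$, which is what makes it extremal.
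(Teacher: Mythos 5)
Your proposal is correct, and the ``only if'' direction coincides in substance with the paper's (Proposition \ref{postmin} $+$ Theorem \ref{premamather} $+$ extremality, which the paper phrases contrapositively). The ``if'' direction, however, follows a genuinely different route. The paper argues by contradiction using the \emph{Mather} property: if $\mu=(1-\la)\mu_1+\la\mu_2$ with $\mu_1\neq\mu_2$ in $\bM$, the convexity of $\be$ forces $\mu_1,\mu_2$ to be Mather measures themselves, and then Theorem \ref{premamather} and Proposition \ref{postmin} produce an occupation measure on a circuit whose edge--support is a \emph{proper} subset of $\supp_\EE\mu$, contradicting the fact that no proper subset of a circuit's edges can carry a circuit. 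You instead prove the stronger, purely structural statement that \emph{every} occupation measure of a parametrized circuit is extremal in $\bM$, with no minimality input: domination $t\nu_k\leq\mu_\xi$ forces $\nu_k$ to be atomic on the atoms of $\mu_\xi$, and the Kirchhoff condition $\partial\rho(\nu_k)=0$ along the simple cycle pins the weights down to $\beta_i^{(k)}=T_i/T_\xi$. What each approach buys: the paper's is shorter given the machinery already in place and parallels the classical ergodic-decomposition style of argument; yours is more elementary and yields more, namely that irreducibility of circuit occupation measures in $\bM$ has nothing to do with action minimization, which in particular makes the equivalence in Proposition \ref{panza} transparent (for a Mather measure, the only content is the ``only if'' half).

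One small inaccuracy, which does not affect validity: in the positive-speed case you assert the edges of the circuit are ``pairwise distinct and non-opposite''. Pairwise distinct they are, but the path $(e,-e)$ \emph{is} a circuit (there are no loops, so it is a simple closed path), and it may be parametrized with positive speeds; so opposite edges can occur. Your computation still covers this case: the two atoms $(e,q_1)$, $(-e,q_2)$ are distinct points of $T\G$ since the speeds are positive, and $\partial\rho(\nu_k)=0$ reduces there to $\beta_1^{(k)}q_1=\beta_2^{(k)}q_2$, which is exactly your cyclic relation for $M=2$. It would be cleaner to state the dichotomy as: either all speeds vanish (equilibrium circuit, single Dirac, trivially extremal) or all speeds are positive, in which case the atoms are distinct and the Kirchhoff relations apply verbatim, without any claim about opposite edges.
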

\begin{proof}  Let $\mu$ be a Mather measure. If it is not an occupation measure  supported by a parametrized circuit,  then by Theorem \ref{premamather}  and Proposition \ref{postmin} it must the convex combination of distinct  occupation measures supported on parametrized circuits. This proves that it is not irreducible.

Conversely,  assume for the purpose of contradiction that $\mu$ is an occupation measure  supported on a parametrized circuit and that it is not irreducible. Hence, there exist $\mu_1 \neq \mu_2$ in $\bM$, $\la \in (0,1)$ such that
\[\mu= (1-\la) \, \mu_1 + \la \, \mu_2.\]
This implies  by Proposition \ref{proprho} that
\[\rho(\mu)= (1- \la) \, \rho(\mu_1) + \la \, \rho(\mu_2).\]
We  thus have
\begin{eqnarray*}
\beta(\rho(\mu))  &=& \int \LL \, d\mu   = (1 -\lambda) \,  \int \LL \, d\mu_1 + \lambda \, \int \LL \, d\mu_2 \\
&\geq& (1- \lambda) \, \beta(\rho(\mu_1)) + \lambda \, \beta(\rho(\mu_2),
\end{eqnarray*}
due to the convex character of $\be$, equality must prevail in the above formula, so that both $\mu_1$ and $\mu_2$ are Mather measures.
 Taking again into account  Theorem \ref{premamather} and Proposition \ref{postmin}, we find an occupation measure $\nu$ supported on a parametrized circuit with $\supp_\EE \nu$ proper subset of $\supp_\EE \mu$. This is in contrast with $\mu$ being supported on a circuit.
 \end{proof}
\smallskip

\begin{Remark}\label{rotvectorirreduciblemeasures} It follows from  Remark \ref{rotvectoroccupmeasure} and
Proposition \ref{panza}, that  the rotation vector of  an occupation  measure $\mu$ must have a  special form:
\begin{equation}\label{oggipiove}
 \lambda \sum_{e\in \EE^+} \tau_e  \, e,
\end{equation}
where $\lambda >0$, $\tau_e \in \{0,\pm 1\}$, and $\EE^+$ denotes an orientation of the graph.
\end{Remark}

\begin{Theorem} For any $c \in H^1(\G,\R)$,  the set of Mather measures $\bM_c$ is the convex hull of the irreducible Mather measures with cohomology $c$, which are finitely many.
\end{Theorem}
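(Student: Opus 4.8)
The plan is to prove the two inclusions that express $\bM_c$ as a convex hull, and then to establish finiteness separately using the graph property. The conceptual point is that $\bM_c$ is the set of minimizers over $\bM$ of the affine functional $\mu\mapsto \int\LL^\om\,d\mu$ (by Definition \ref{ciop} together with the identity $-\alpha(c)=\min_{\mu\in\bM}\int\LL^\om\,d\mu$ from \eqref{defalpha}), hence is a \emph{face} of $\bM$; this is exactly what makes its extremal points coincide with the irreducible Mather measures lying in $\bM_c$.

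For the inclusion $\bM_c\subseteq \co(\text{irreducible Mather measures with cohomology }c)$, I would start from an arbitrary $\mu\in\bM_c$. By Proposition \ref{postmin} its restriction to each edge is concentrated on a point, so Theorem \ref{premamather} applies and decomposes it as a finite convex combination $\mu=\sum_i\si_i\,\mu_{\xi_i}$ of occupation measures supported on circuits, with $\si_i>0$ and $\sum_i\si_i=1$. The key step is to check that each $\mu_{\xi_i}$ again lies in $\bM_c$: since every $\mu_{\xi_i}\in\bM$ one has $\int\LL^\om\,d\mu_{\xi_i}\geq -\alpha(c)$, while the linearity of the action gives $-\alpha(c)=\int\LL^\om\,d\mu=\sum_i\si_i\int\LL^\om\,d\mu_{\xi_i}$, and these two facts force $\int\LL^\om\,d\mu_{\xi_i}=-\alpha(c)$ for every $i$. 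By Proposition \ref{panza} an occupation measure on a circuit that belongs to $\bM_c$ is precisely an irreducible Mather measure with cohomology $c$, so $\mu$ is a convex combination of such measures. The reverse inclusion is immediate, since irreducible Mather measures with cohomology $c$ belong to the convex set $\bM_c$ (Proposition \ref{mathersetareconvex}), whence their convex hull is contained in it.

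Finiteness will follow from Mather's graph property. By Corollary \ref{ironic} (equivalently Proposition \ref{mathergraph2}{\bf (i)}), the restriction of $\pi_\EE$ to $\widetilde{\MM}_c$ is injective, so there is a single well-defined speed assignment $e\mapsto q_e$ with the property that every element of $\bM_c$ has the form $\sum_e\la_e\,\de(e,q_e)$ for these fixed speeds. Consequently an irreducible Mather measure with cohomology $c$, being an occupation measure on a circuit $\xi$, is completely determined by the underlying circuit alone: its speeds, and hence the weights $T_i$ entering $\mu_\xi$ through \eqref{simon}, are prescribed by $c$. Since $\G$ is finite it contains only finitely many circuits, so there are only finitely many such measures.

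I expect the only delicate point to be the middle step of the second paragraph---that the circuit components produced by Theorem \ref{premamather} inherit the minimality of $\mu$---but this is precisely the face property of $\bM_c$, handled cleanly by the linearity of $\mu\mapsto\int\LL^\om\,d\mu$ and the variational formula \eqref{defalpha}: a minimizer decomposed into elements of $\bM$ must split into minimizers. Everything else reduces to bookkeeping with the already established propositions.
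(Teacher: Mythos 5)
Your proposal is correct and follows essentially the same route as the paper's proof: decompose an arbitrary $\mu \in \bM_c$ via Proposition \ref{postmin} and Theorem \ref{premamather} into occupation measures on circuits, use linearity of the action together with the minimality defining $\bM_c$ to see each component lies in $\bM_c$, invoke Proposition \ref{panza} to identify these components as irreducible Mather measures, and deduce finiteness from the graph property of Corollary \ref{ironic} combined with the finiteness of circuits in $\G$. Your ``face of $\bM$'' framing is just a cleaner packaging of the claim the paper proves first (that extremality in $\bM_c$ coincides with irreducibility), and your finiteness paragraph spells out the paper's terse remark that a circuit determines the Mather measure supported on it.
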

\begin{proof} We know from Proposition \ref{mathersetareconvex} that $\bM_c$ is a convex set. We claim  that $\mu \in \bM_c$ is irreducible  if and only if it is an extremal point of $\bM_c$. It is trivial that if it is irreducible then it is extremal in $\bM_c$. Conversely, let $\mu$ be extremal in $\bM_c$, and assume that there exist $\mu_1$, $\mu_2$ in $\bM$, $\la \in (0,1)$  with
\[\mu = (1-\la) \, \mu_1 + \la \, \mu_2.\]
If If $\om \in \Cf^1(\Gamma,\R)$ is of cohomology $c$, we have
\[- \al(c)= \int \LL^\om \, d\mu= (1-\la) \, \int \LL^\om \, d\mu_1 + \la \, \int \LL^\om \, d\mu_2\]
which implies, by the minimality property of $\al(\cdot)$ that both $\mu_1$ and $\mu_2$ are Mather measures of cohomology $c$, which is impossible. This proves the claim.

Let $\mu \in \bM_c$ then by  Theorem \ref{premamather} it is convex combination of occupation measures supported on parametrized circuits. Arguing as in the first part of the proof, we see that all the measures forming  the convex combination are in $\bM_c$, and consequently by Proposition \ref{panza} they are irreducible Mather measures in $\bM_c$. This shows that $\bM_c$ is the convex hull of its extremal points. These extremal measures are finitely many since -- by the graph property in Corollary \ref{ironic} --  a circuit identifies the Mather measures supported on it, if any,  and the set of circuits in $\G$ is finite.

\end{proof}

As shown in the previous result, any $\bM_c$, for $c\in H^1(\G,\R)$, contains some irreducible measure. The situation is rather different for the sets $\bM^h$. In fact, we know Remark \ref{rotvectorirreduciblemeasures}
that if $\bM^h$ contains irreducible Mather measures, then $h$ must be as in \eqref{oggipiove}; hence, not all $\bM^h$ do contain them.
We can get some information on which $\bM^h$'s contain irreducible Mather measures by looking at the extremal points of the epigraph of $\be$.  We  recall that   the epigraph of $\beta$ is given by
$$
{\rm epi}(\beta):= \{(h, t)\in H_1(\G,\R)\times \R:\; t\geq \beta(h)\}
$$

As in the classical ergodic theory, we have:

\begin{Proposition}\label{epigraph}  Let $h\in H_1(\G,\R)$. If $(h,\beta(h))$ is an extremal point of  ${\rm epi(\beta)}$, then there exist irreducible Mather measures of rotation vector $h$.
\end{Proposition}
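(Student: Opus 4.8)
The plan is to prove the contrapositive: if $h$ does not admit any irreducible Mather measure, then $(h,\be(h))$ fails to be extremal in $\mathrm{epi}(\be)$. By Theorem \ref{premamather} together with Proposition \ref{postmin}, any Mather measure $\mu \in \bM^h$ is a convex combination $\mu = \sum_j \si_j \mu_{\xi_j}$ of occupation measures based on circuits, with $\si_j > 0$ and $\sum_j \si_j = 1$. Arguing exactly as in the proof of Proposition \ref{panza}, each $\mu_{\xi_j}$ must itself be a Mather measure (of its own homology $\rho(\mu_{\xi_j})$), since $\be$ is convex and equality must prevail in Jensen's inequality along the decomposition. Each $\mu_{\xi_j}$, being an occupation measure supported on a circuit, is irreducible by Proposition \ref{panza}. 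If $h$ admits no irreducible Mather measure, then this decomposition must be genuinely nontrivial: there are at least two distinct circuits appearing, so at least two of the $\rho(\mu_{\xi_j})$ differ.

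The key step is then to lift this decomposition of $\mu$ into a decomposition of the point $(h, \be(h))$ in the epigraph. First I would record that for each irreducible constituent $\mu_{\xi_j}$, setting $h_j := \rho(\mu_{\xi_j})$, we have $\rho(\mu) = \sum_j \si_j h_j = h$ by the affineness of $\rho$ (Proposition \ref{proprho}), and $\int \LL \, d\mu_{\xi_j} = \be(h_j)$ because $\mu_{\xi_j}$ is a Mather measure of homology $h_j$. Integrating $\LL$ against the convex combination gives
\[
\be(h) = \int \LL \, d\mu = \sum_j \si_j \int \LL \, d\mu_{\xi_j} = \sum_j \si_j \be(h_j),
\]
so that
\[
(h, \be(h)) = \sum_j \si_j \, (h_j, \be(h_j)).
\]
Each $(h_j, \be(h_j))$ lies in $\mathrm{epi}(\be)$ (indeed on the graph of $\be$). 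Provided not all the pairs $(h_j, \be(h_j))$ coincide, this exhibits $(h, \be(h))$ as a nontrivial convex combination of distinct points of the epigraph, contradicting extremality.

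The main obstacle is ruling out the degenerate possibility that, although several distinct circuits appear, all the points $(h_j, \be(h_j))$ are equal; in that case the displayed convex combination is trivial and says nothing about extremality. I would handle this as follows. If all $h_j$ are equal to a common value $h' = h$, then each distinct irreducible constituent $\mu_{\xi_j}$ is a Mather measure of the same homology $h$, and these are genuinely distinct closed measures. One can then perturb within $\bM^h$: take two distinct constituents $\mu_{\xi_1} \neq \mu_{\xi_2}$ (both Mather measures of homology $h$, hence both minimizing the action over $\rho^{-1}(h)$) and consider $\mu^{\pm} = \mu \pm t(\mu_{\xi_1} - \mu_{\xi_2})$ for small $t>0$; these remain closed, have rotation vector $h$, and by linearity of the action attain the value $\be(h)$, so $(h,\be(h)) = \tfrac12(h,\be(h)) + \tfrac12(h,\be(h))$ with the two epigraph points $(\rho(\mu^{\pm}), \int \LL\,d\mu^{\pm}) = (h,\be(h))$ obtained from the distinct measures $\mu^{\pm}$ — here I should instead directly produce two distinct epigraph points by lifting $t$ above $\be(h)$, using that the vertical segment $\{(h,\be(h)+s): s\ge 0\}$ forces $(h,\be(h))$ to be extremal only if $h$ is not in the relative interior of a face; the cleanest route is to observe that if all $h_j=h$ the constituents are distinct Mather measures of homology $h$, and then $h$ does admit an irreducible Mather measure (any single $\mu_{\xi_j}$), contradicting our standing hypothesis. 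This last observation closes the argument: under the assumption that no irreducible Mather measure of rotation vector $h$ exists, the $h_j$ cannot all equal $h$, the decomposition of $(h,\be(h))$ is nontrivial, and extremality fails.
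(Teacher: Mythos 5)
Your proposal is correct and is essentially the contrapositive of the paper's own proof: both arguments decompose a Mather measure in $\bM^h$ via Theorem \ref{premamather}, use convexity of $\beta$ (equality in the chain of inequalities) to show each circuit constituent is a Mather measure of its own homology, hence irreducible by Proposition \ref{panza}, and then play the convex combination $(h,\beta(h))=\sum_j \sigma_j (h_j,\beta(h_j))$ against extremality of $(h,\beta(h))$ to force $h_j=h$. The detours you self-correct along the way --- the non sequitur that distinct circuits must have distinct rotation vectors, and the abandoned $\mu^{\pm}$ perturbation --- do not enter your final, valid argument.
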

\begin{proof}
 Let $\mu$ be a Mather measure with rotation vector $h$; then, according to Theorem \ref{premamather}
 \[ \mu= \sum_{i=1}^M \lambda_i \mu_i\] \ with $\lambda >0$, $\sum_i \la_i =1$ and $\mu_i$  occupation measures supported on parametrized circuits. Let us define
  \[ h_i=\rho(\mu_i) \txt{for any $i = 1,\ldots, M$.}\]
  We have
\begin{eqnarray*}
\beta \big (\sum_{i=1}^M h_i \big ) &=& \beta(h) = \int \LL \, d\mu  \\
&=& \sum_{i=1}^M \lambda_i \int \LL \, d\mu_i
\geq   \sum_{i=1}^M \lambda_i \beta(h_i) .
\end{eqnarray*}
Due to the convex character of $\be$, we see that equality must prevail in the above sequence of inequalities, so that all the  $\mu_i$'s must be Mather measures.  In addition, thanks to Proposition \ref{panza}, they are irreducible Mather measures.  We in addition have  that
$$
(h,\beta(h)) = \sum_{i=1}^M \lambda_i (h_i,\beta(h_i))
$$
Since $(h,\beta(h))$ is an extremal point of  ${\rm epi(\beta)}$,  we must necessarily have $h_i=h$ for any $i$. Hence, all the $\mu_i$'s are irreducible Mather measures with rotation vector $h$.
\end{proof}

\bigskip

\section{Weak KAM facts} \label{sec7}

We pause the exposition  of Aubry Mather theory on graphs, to recall some basic results of weak KAM theory that we will use in the following section.   Note that coercivity and convexity of the Hamiltonian are sufficient for these results to hold  true. All the material is taken from  \cite{SiconolfiSorrentino},   which contains a comprehensive treatment of the topic.\\

We  consider  a $1$--cochain $\om$ with cohomology class $c$, and the  family of discrete  Hamilton--Jacobi equations on $\G$
\begin{equation}\label{HJa} \tag{HJ$^\omega_a$}
 \max_{-e \in  \EE_x} \mathcal H^\om(e,\langle du,  e\rangle) =a \txt{for $x \in \VV$, $ a \in \R$}
\end{equation}
 which  can be equivalently written as
\[u(x)= \min_{-e \in \EE_x}  \big( u(\oo(e)) + \si^\om(e,a)\big).\]
A function $u: \VV \to \R$ is called {\it solution} if   equality in \eqref{HJa} holds for every vertex $x$. If instead the left hand--side is less than or equal to $a$, we say that  $u$ is a {\it subsolution} of \eqref{HJa}.\\

We set
\[a_0:= \max_{e \in \EE} a_e.\]

\begin{Remark}\label{due}
It is clear that equation \eqref{HJa} does not even make sense if $a < a_0$, because in this case the $a$--sublevels of $\HH(e,\cdot)$ are empty for some edge $e$.
\end{Remark}
\smallskip

Given a path $\xi=(e_i)_{i=1}^M$ in $\G$, we define  for $a \geq a_0$ (see \eqref{sigmaom})
\[\si^\om(\xi,a):= \sum_{i=1}^M \si^\om(e_i,a).\]
Note that this definition only depends on the concatenated edges making up $\xi$, no parametrization is involved. We sometimes refer to $\si^\om(\xi,a)$ as the {\em intrinsic length} of the path $\xi$ related to the Hamiltonian $\HH^\om$ and the level $a$.

\smallskip

 \begin{Proposition}\label{sottosola} \hfill
 \begin{itemize}
   \item[{\bf (i)}] Equation \eqref{HJa} admits subsolutions if and only if
   \[\si^\om(\xi,a) \geq 0 \txt{for any closed path $\xi$.}\]
   \item[{\bf (ii)}] A function $u: \VV \to \R$ is a subsolution of \eqref{HJa} if and only if
\[u(x) -u(y) \leq  \si^\om(\xi,a)   \txt{for any path $\xi$ with  $\oo(\xi)=y$,  $\tt(\xi)=x$.}\]
 \item[{\bf (iii)}] There is one and only one value of $a$, called {\em critical value} of $\HH^\om$,  for which the corresponding equation has solutions on the whole  $\Gamma$. It is given  by
\begin{equation}\label{defhameff}
  \min \{a \in \R:  \;\hbox{\eqref{HJa} admits subsolutions}\}.
  \end{equation}
 \end{itemize}
 \end{Proposition}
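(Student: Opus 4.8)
The plan is to reduce being a subsolution to a single edge-wise inequality and then analyze how that inequality depends on $a$. First I would note that, since $\HH^\om(e,\cdot)$ is strictly convex with minimum $a_e$ and $\HH^\om(e,p)=\HH^\om(-e,-p)$, for $a\ge a_e$ the sublevel $\{p:\HH^\om(e,p)\le a\}$ is exactly $[-\si^\om(-e,a),\,\si^\om(e,a)]$. Reading \eqref{HJa} vertex by vertex and letting $x$ range over $\VV$ (so that $e$ ranges over all of $\EE$), being a subsolution becomes the edge-wise condition
\[\langle du,e\rangle=u(\tt(e))-u(\oo(e))\le \si^\om(e,a)\txt{for every $e\in\EE$,}\]
the lower bound for $e$ coinciding with the upper bound for $-e$ via $\langle du,-e\rangle=-\langle du,e\rangle$. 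Given this, (ii) is immediate: along a path $\xi=(e_i)_{i=1}^M$ from $y$ to $x$ the differences telescope, $\sum_i\langle du,e_i\rangle=u(x)-u(y)$, so summing the edge-wise inequalities gives $u(x)-u(y)\le\si^\om(\xi,a)$; conversely, one-edge paths recover the edge-wise condition. For (i), if a subsolution exists then (ii) applied to a closed path (where $\oo(\xi)=\tt(\xi)$) yields $0\le\si^\om(\xi,a)$; conversely, assuming all closed paths have nonnegative length, I would fix $x_0$ and set $u(x):=\min_{\xi:x_0\to x}\si^\om(\xi,a)$. This is finite and attained, because any cycle inside a path has length $\ge 0$ and may be excised without increasing $\si^\om$, so it suffices to minimize over the finitely many simple paths; appending an edge $e$ to an optimal path reaching $\oo(e)$ then gives $u(\tt(e))\le u(\oo(e))+\si^\om(e,a)$, i.e. $u$ is a subsolution.

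For (iii), set $S:=\{a\ge a_0:\eqref{HJa}\text{ admits subsolutions}\}$. By (i), together with the decomposition of any closed walk into simple closed paths (using additivity of $\si^\om$ under concatenation), $a\in S$ iff $\si^\om(\xi,a)\ge 0$ for every circuit $\xi$, of which there are finitely many. Since each $a\mapsto\si^\om(e,a)=\si(e,a)-\langle\om,e\rangle$ is continuous and strictly increasing on $[a_e,\infty)$ by Lemma \ref{newbornbis}, the map $a\mapsto\min_{\xi\ \mathrm{circuit}}\si^\om(\xi,a)$ is continuous and strictly increasing, so $S=[c,\infty)$ with $c:=\min S$, matching \eqref{defhameff}. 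At $a=c$ this minimum equals $0$: it is $\ge 0$ since $c\in S$, and strict positivity would either let me lower $a$ slightly while staying in $S$ (if $c>a_0$), or contradict that the equilibrium circuit on an edge $e^*$ with $a_{e^*}=a_0$ has length $\si(e^*,a_0)+\si(-e^*,a_0)=0$ by \eqref{cumpa} (if $c=a_0$). Hence there is a critical cycle $\xi_0$ with $\si^\om(\xi_0,c)=0$.

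To produce a genuine solution at $a=c$, I would base the value function at a vertex $z_0$ on such a critical cycle: $u(x):=\min_{\xi:z_0\to x}\si^\om(\xi,c)$, a subsolution with $u(z_0)=0$. Equality in \eqref{HJa} then needs to be checked vertexwise. For $x\ne z_0$ an optimal path is nonempty, and splitting off its last edge $e$ gives $u(x)=\si^\om(\xi\setminus e,c)+\si^\om(e,c)\ge u(\oo(e))+\si^\om(e,c)$, which with the subsolution inequality forces equality on $e$. At $z_0$, writing $\xi_0=(f_1,\dots,f_k)$ with $\si^\om(\xi_0,c)=0$, the initial segment $(f_1,\dots,f_{k-1})$ reaches $w:=\oo(f_k)$ with $u(w)\le-\si^\om(f_k,c)$, so $u(w)+\si^\om(f_k,c)\le 0=u(z_0)$; the reverse subsolution inequality gives equality, so the incoming edge $f_k$ realizes the minimum at $z_0$. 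Thus $u$ solves \eqref{HJa} at level $c$.

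Finally, uniqueness. For $a<c$ there are no subsolutions, hence no solutions. For $a>c$, if $u$ were a solution then at each vertex $x$ the minimum in \eqref{HJa} is attained by some incoming edge $\phi(x)$ with $\langle du,\phi(x)\rangle=\si^\om(\phi(x),a)$; iterating $x\mapsto\oo(\phi(x))$ in the finite graph produces a repeated vertex, hence a closed path $\xi$ all of whose edges satisfy this equality, and summing telescopes to $0=\si^\om(\xi,a)$. But $\si^\om(\xi,c)\ge 0$ (as $c\in S$) and strict monotonicity give $\si^\om(\xi,a)>\si^\om(\xi,c)\ge 0$, a contradiction. So $c$ is the unique admissible level. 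I expect the main obstacle to be the construction at the critical value, namely pinning down that a zero-length cycle occurs exactly at $a=c$ and verifying the solution identity precisely at the base vertex $z_0$ on that cycle; the monotonicity-and-closedness bookkeeping for $S$ is the other place that needs care.
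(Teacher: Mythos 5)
Your proposal is correct, but note that the paper itself contains no proof of Proposition \ref{sottosola}: it is imported wholesale from \cite[Propositions 6.5, 6.8 and Theorem 6.16]{SiconolfiSorrentino}. So what you have produced is a self-contained argument where the paper outsources, and it holds together: the reduction of the subsolution property to the one-sided edge-wise inequality $\langle du,e\rangle\le\si^\om(e,a)$ (via the identification of the sublevel set of $\HH^\om(e,\cdot)$ with $[-\si^\om(-e,a),\si^\om(e,a)]$, the lower bound for $e$ being the upper bound for $-e$), telescoping for {\bf (ii)}, the shortest-path value function $u(x)=\min_{\xi:x_0\to x}\si^\om(\xi,a)$ for the converse of {\bf (i)}, the reduction to the finitely many circuits combined with continuity and strict monotonicity of $a\mapsto\si^\om(e,a)$ (Lemma \ref{newbornbis}) to locate the critical value, the solution anchored at a vertex of a zero-length cycle, and the cycle-extraction argument excluding solutions for $a>c$ are precisely the metric (intrinsic-length) circle of ideas that the cited reference develops; your write-up gains the reader independence from that reference at the cost of length. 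Two points deserve a word of care, though neither is a genuine gap. First, for the minimum in \eqref{defhameff} to exist you also need $S\neq\emptyset$, which you use tacitly when writing $S=[c,\infty)$; it follows because $\si^\om(e,a)\to+\infty$ as $a\to+\infty$, so every circuit eventually has positive intrinsic length. Second, your uniqueness step extracts at each vertex an incoming edge $\phi(x)$ with $\langle du,\phi(x)\rangle=\si^\om(\phi(x),a)$, i.e.\ attainment in the \emph{min-form} of \eqref{HJa}; this is legitimate since the paper states the min-form and the max-form are equivalent, but if one started from the max-form alone, equality $\HH^\om(e,\langle du,e\rangle)=a$ could a priori be realized on the left branch $\langle du,e\rangle=-\si^\om(-e,a)$, and a short additional remark would be needed to produce the right-branch edge your iteration requires.
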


 For a proof of these claims see \cite[Propositions 6.5,  6.8 and Theorem 6.16]{SiconolfiSorrentino}

 \medskip

Clearly the Hamiltonian $\HH^\om$ is not invariant by change of representative in the class $c$,  however its critical value does not depend on the chosen representative, but only on the cohomology class $c$. If, in fact, we replace  $\om$ by $\om'= \om + dw$, for some $w \in \Cf^0(\G,\R)$,  then, given any (sub)solution $u$ to the equation associated to $\HH^\om$, the function $u-w$ will be a (sub)solution to the equation associated to $\HH^{\om'}$.\\

We can therefore define a function
 \[\widetilde \al :H^1(\Gamma,\R) \to \R\]
 associating  to any cohomology class the critical value of $\HH^\om$, as defined in \eqref{defhameff} (it only depends on the cohomology class of $\om$). We call {\it critical} the equation
\[\max_{e \in - \EE_x} \mathcal H^\om(e,\langle du,  e\rangle) =\widetilde\al(c)\]
and qualify  as critical its (sub)solutions. According to Remarks \ref{un} and \ref{due}
\[ \widetilde \al(c) \geq a_0 \txt{ for any $c \in H^1(\G,\R)$.}\]

\smallskip

\begin{Proposition}\label{sottosolacritica} Given $c \in H^1(\G,\R)$ and $\om$ of cohomology class $c$, the critical value $\widetilde \al(c)$ is characterized by the following properties:
\begin{itemize}
  \item[{\bf (i)}] $\si^\om(\xi,\widetilde\al(c))\geq 0$ for all cycles $\xi $ in $ \G$;
  \item[{\bf (ii)}] there exists a cycle $\zeta$ with $\si^\om(\zeta, \widetilde\al(c))=0$.
\end{itemize}
\end{Proposition}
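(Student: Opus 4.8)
The plan is to derive both properties from the characterization of the critical value in Proposition \ref{sottosolacritica}'s predecessors, namely Proposition \ref{sottosola}, and from the definition of $\widetilde\al(c)$ as the minimal level $a$ admitting subsolutions. The two items correspond exactly to the boundary behavior of subsolution-existence as $a$ crosses the critical value, so the strategy is to exploit monotonicity of the intrinsic length in $a$ together with a continuity/compactness argument over the finite set of circuits.

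First I would establish item {\bf (i)}. Since $\widetilde\al(c)$ is the critical value, by Proposition \ref{sottosola} {\bf (iii)} the equation \eqref{HJa} with $a = \widetilde\al(c)$ admits solutions, hence in particular subsolutions. Then Proposition \ref{sottosola} {\bf (i)} immediately gives $\si^\om(\xi, \widetilde\al(c)) \geq 0$ for every closed path $\xi$; since cycles are a particular case of closed paths, this yields {\bf (i)}. This step is essentially a direct citation and should be routine.

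The substantive part is item {\bf (ii)}, the existence of a cycle of vanishing intrinsic length. Here I would argue by contradiction combined with the minimality in the definition \eqref{defhameff}. Suppose every cycle $\xi$ satisfied $\si^\om(\xi, \widetilde\al(c)) > 0$. Because the graph is finite (assumption {\bf (G1)}), there are only finitely many simple cycles (circuits), and a general closed path decomposes into circuits; one checks it suffices to control the finitely many circuits, so $\inf_\xi \si^\om(\xi,\widetilde\al(c))$ over circuits is a strict positive minimum $\eps > 0$. The key analytic input is that, for each fixed edge $e$, the map $a \mapsto \si^\om(e,a) = \si(e,a) - \langle\om,e\rangle$ is continuous (Lemma \ref{newbornbis}), hence $a \mapsto \si^\om(\xi,a)$ is continuous for each of the finitely many circuits $\xi$. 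Therefore I can choose $a' < \widetilde\al(c)$, with $a' \geq a_0$, close enough to $\widetilde\al(c)$ that $\si^\om(\xi, a') \geq 0$ still holds for all circuits simultaneously, using finiteness to pass from pointwise to uniform control. By Proposition \ref{sottosola} {\bf (i)} this would make \eqref{HJa} admit subsolutions at level $a'$, contradicting the fact that $\widetilde\al(c)$ is the \emph{minimal} such level per \eqref{defhameff}. Hence some cycle $\zeta$ must have $\si^\om(\zeta,\widetilde\al(c)) = 0$.

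The main obstacle I anticipate is the reduction from arbitrary closed paths to the finite family of circuits, and ensuring the continuity/uniformity argument is clean: one must verify that the strict positivity assumption yields a uniform positive lower bound over the relevant finite set, and that decreasing $a$ slightly preserves nonnegativity for \emph{every} closed path, not merely the circuits used to define $\eps$. This requires knowing that the intrinsic length of a general closed path is nonnegative whenever it is nonnegative on circuits; I expect this to follow from additivity of $\si^\om$ along concatenation together with the homological decomposition of closed paths into circuits established around \eqref{cincin}--\eqref{cincinbin}, but it is the point where care is needed. I would also need to confirm that $a'$ can be taken $\geq a_0$, so that \eqref{HJa} is well-posed at level $a'$ (Remark \ref{due}); since $\widetilde\al(c) \geq a_0$ this is available as long as the strict inequality at the critical level is not forced by $a = a_0$ itself, a case one handles separately or absorbs into the continuity argument.
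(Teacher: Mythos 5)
The paper itself does not prove this proposition---it is quoted verbatim from \cite[Lemma 6.7, Corollary 6.9, Proposition 6.15 and Theorem 6.16]{SiconolfiSorrentino}---so your self-contained argument is a genuinely different route, and most of it is sound: item {\bf (i)} is exactly the short consequence of Proposition \ref{sottosola} {\bf (i)} and {\bf (iii)} that you describe, and your contradiction scheme for {\bf (ii)} (finiteness of circuits, additivity of $\si^\om$ under decomposition of a closed path into circuits, continuity from Lemma \ref{newbornbis}, then minimality in \eqref{defhameff}) works whenever $\widetilde\al(c) > a_0$.

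The genuine gap is the case $\widetilde\al(c)=a_0$, which you flag but do not close, and which cannot be ``absorbed into the continuity argument'': when $\widetilde\al(c)=a_0$ there is no admissible level $a'<\widetilde\al(c)$ at all, since \eqref{HJa} is meaningless below $a_0$ (Remark \ref{due}), so the contradiction mechanism produces nothing. Here {\bf (ii)} must be proved constructively, by an idea absent from your proposal: pick $e_0 \in \EE$ with $a_{e_0}=a_0$ and take the equilibrium circuit $\zeta=(e_0,-e_0)$. Since cochains are antisymmetric, $\langle \om, e_0\rangle + \langle \om, -e_0\rangle =0$, hence by \eqref{sigmaom} and \eqref{cumpa}
\[
\si^\om(\zeta,a_0)\;=\;\si(e_0,a_0)+\si(-e_0,a_0)\;=\;p_{e_0}+p_{-e_0}\;=\;0,
\]
which is precisely the invariance observed in Remark \ref{un}; so $\zeta$ is the required null cycle. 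Two lesser points. First, the reduction from closed paths to circuits is elementary combinatorics (extract a shortest closed subpath between two repeated vertices, which is simple by minimality, and induct on the length, using that $\si^\om$ is a sum over edges); it has nothing to do with the homological identities \eqref{cincin}--\eqref{cincinbin}, so that reference is a red herring. Second, since the statement is a \emph{characterization}, you should add the one-line uniqueness argument: if $a<a'$ both satisfied {\bf (i)}--{\bf (ii)}, the null cycle for $a'$ would have strictly negative length at $a$ by strict monotonicity of $\si^\om(e,\cdot)$ (Lemma \ref{newbornbis}), contradicting {\bf (i)}.
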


For a proof of these claims see \cite[Lemma 6.7, Corollary 6.9, Proposition 6.15 and Theorem 6.16]{SiconolfiSorrentino}.\\

\smallskip
We define the Aubry sets as follows:
\[{\mathcal A}_c:= \{e \in \EE \mid \;\hbox{belonging to some
cycle with $\si^\om(\xi,\widetilde\al(c)) = 0$}\}.\]

\begin{Remark}
Given an arbitrary path $\xi$, the intrinsic length $\si^\om(\xi,\widetilde\al(c))$ is not invariant for the change of representative, however invariance is valid if $\xi$ is a cycle.  This is the reason why the Aubry set only depends on $c$ and not on the representative $\om$.  \\
\end{Remark}

We state in the next proposition a  relevant property of the Aubry sets (see \cite[Lemma 7.3]{SiconolfiSorrentino}).

\begin{Proposition}\label{solissima}  Let $c \in H^1(\G,\R)$ and $\om \in \Cf^1(\Gamma,\R)$ be of cohomology class $c$. Then,  any  subsolution $u$ of $\HH^\om=\widetilde \al(c)$ satisfies
  \[ \< du,e \ra = \si^\om(e,\widetilde\al(c)) \quad\hbox{and} \quad \HH^\om (e, \<du,e\ra)= \widetilde \al(c) \txt{for $e \in \A_c$}.\]
  Consequently, the differentials of all such subsolutions coincide on $e \in \A_c$.
\end{Proposition}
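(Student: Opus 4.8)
The plan is to combine the metric characterization of subsolutions in Proposition \ref{sottosola} {\bf (ii)} with the fact that, by definition of the Aubry set, every edge of $\A_c$ sits on a cycle of vanishing intrinsic length. First I would record the basic per-edge inequality. Applying Proposition \ref{sottosola} {\bf (ii)} to the single-edge path $(e)$, and recalling that by definition of the coboundary operator $\langle du, e\rangle = u(\tt(e)) - u(\oo(e))$, every subsolution $u$ of $\HH^\om = \widetilde\al(c)$ satisfies
\[
\langle du, e\rangle \leq \si^\om(e, \widetilde\al(c)) \txt{for every $e \in \EE$.}
\]

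Next I would fix $e \in \A_c$. By the definition of the Aubry set, $e$ belongs to a cycle $\xi = (e_i)_{i=1}^M$ with $\si^\om(\xi, \widetilde\al(c)) = 0$, say $e = e_k$. Summing the per-edge inequality along $\xi$, the left-hand side telescopes along the concatenated edges, and since $\xi$ is closed one gets $\sum_{i=1}^M \langle du, e_i\rangle = u(\tt(\xi)) - u(\oo(\xi)) = 0$; the right-hand side is $\sum_{i=1}^M \si^\om(e_i, \widetilde\al(c)) = \si^\om(\xi, \widetilde\al(c)) = 0$ by the additive definition of intrinsic length. Hence the sum of the non-positive quantities $\langle du, e_i\rangle - \si^\om(e_i, \widetilde\al(c))$ vanishes, which forces each term to vanish; in particular $\langle du, e\rangle = \si^\om(e, \widetilde\al(c))$. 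This calibration step — upgrading a chain of scalar inequalities to equalities by summing along a critical (null-length) cycle — is the heart of the argument and the one I expect to require the most care, precisely because it hinges on matching the telescoping of the differences $u(\tt(e_i)) - u(\oo(e_i))$ to the definition of $\si^\om(\xi,\cdot)$.

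Finally, for the Hamiltonian identity I would use that $\si^\om(e, \cdot)$ is, by construction, the inverse of $\HH^\om(e, \cdot)$ on the increasing branch to the right of its minimizer; concretely, from \eqref{sigmaom} and the definitions preceding Lemma \ref{newbornbis} one has $\HH^\om(e, \si^\om(e, a)) = \HH(e, \si(e, a)) = a$ for $a \geq a_e$. Since $\widetilde\al(c) \geq a_0 \geq a_e$, substituting $\langle du, e\rangle = \si^\om(e, \widetilde\al(c))$ yields $\HH^\om(e, \langle du, e\rangle) = \widetilde\al(c)$ on $\A_c$. The concluding assertion is then immediate: the quantity $\si^\om(e, \widetilde\al(c))$ to which $\langle du, e\rangle$ is shown equal does not depend on $u$, so all critical subsolutions share the same differential on every edge of $\A_c$.
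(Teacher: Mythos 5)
Your proof is correct. Note that the paper itself does not prove this proposition: it is quoted from \cite{SiconolfiSorrentino} (Lemma 7.3 there), so there is no in-text argument to compare against. Your proof supplies a complete, self-contained derivation using only tools available in this paper: the per-edge inequality $\< du, e\ra \leq \si^\om(e,\widetilde\al(c))$ from Proposition \ref{sottosola} {\bf (ii)}, the calibration step along a cycle of vanishing intrinsic length (sum of non-positive terms equal to zero forces each term to vanish, using that $\< du, \cdot\ra$ telescopes to zero on a closed path), and the identity $\HH^\om(e,\si^\om(e,a)) = a$ for $a \geq a_e$, which is legitimate here because $\widetilde\al(c) \geq a_0 \geq a_e$ as recorded after Proposition \ref{sottosola}. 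The final observation — that $\si^\om(e,\widetilde\al(c))$ is independent of the particular subsolution $u$, whence all critical subsolutions share the same differential on $\A_c$ — is exactly the right way to conclude. This is the standard calibration argument one would expect the cited lemma to use, so your route is faithful to the theory rather than a genuinely different one.
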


\smallskip

The value of  $du$ on the Aubry set $\A_c$ is clearly not  invariant for change of representative in $c$, however the element $\frac \partial{\partial p}\HH^\om(e,\langle du,e\rangle)$,  namely the element  characterized by the equality
{\small  \begin{equation}\label{donald1}
 \frac\partial{\partial p} \HH^\om (e,\langle du,e\rangle) \, \langle du, e\rangle =  \LL^\om(e,\frac\partial{\partial p}\HH^\om(e,\langle du,e\rangle) + \HH^\om (e,\langle du,e\rangle) \qquad \forall\; e\in\A_c
 \end{equation}
 }
possesses such an invariance, as made precise by the following result. \\

 \begin{Lemma}\label{donald}  Let $\om, \om' \in \Cf^1(\Gamma,\R)$  be in the same cohomology class $c$,  and let $u$, $v$ be subsolutions to
 {\rm (HJ$^\omega_{\widetilde \al(c)}$)}
 and
  {\rm (HJ$^{\omega'}_{\widetilde \al(c)}$)},
respectively; then
 \begin{equation}\label{donald2}
  \frac\partial{\partial p}\HH^\om(e,\langle du,e\rangle)= \frac\partial{\partial p} \HH^{\om'}(e,\langle dv,e\rangle)\txt {for any $e \in \A_c$.}
 \end{equation}
 \end{Lemma}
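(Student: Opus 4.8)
The plan is to collapse both sides of \eqref{donald2} to one and the same expression built only from the \emph{unmodified} Hamiltonian $\HH$ and the critical value $\widetilde\al(c)$, so that the dependence on the chosen representative drops out. The single real input required is Proposition \ref{solissima}, which pins down the value of a critical subsolution along each edge of the Aubry set.

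First I would fix $e \in \A_c$ and apply Proposition \ref{solissima} to the subsolution $u$ of {\rm (HJ$^\om_{\widetilde\al(c)}$)}, obtaining $\langle du, e\rangle = \si^\om(e, \widetilde\al(c))$. Combining this with the defining relation \eqref{sigmaom}, namely $\si^\om(e,a) = \si(e,a) - \langle\om, e\rangle$, I get
$$\langle du, e\rangle + \langle\om, e\rangle = \si(e, \widetilde\al(c)).$$
The decisive feature is that the right-hand side no longer contains $\om$: it is written through the unmodified inverse $\si(e,\cdot)$ and the critical value $\widetilde\al(c)$, which depends only on the cohomology class $c$ and is therefore common to $\om$ and $\om'$.

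Next I would differentiate the modified Hamiltonian. Since $\HH^\om(e,p) = \HH(e, p + \langle\om, e\rangle)$ by definition and $\langle\om,e\rangle$ is constant in $p$, the chain rule gives $\frac{\partial}{\partial p}\HH^\om(e, p) = \frac{\partial}{\partial p}\HH(e, p + \langle\om, e\rangle)$, where differentiability is guaranteed by {\bf (H1)}. Evaluating at $p = \langle du, e\rangle$ and inserting the identity above,
$$\frac{\partial}{\partial p}\HH^\om(e, \langle du, e\rangle) = \frac{\partial}{\partial p}\HH\big(e, \si(e, \widetilde\al(c))\big).$$
Running the same argument verbatim for $v$ and $\om'$ produces the identical right-hand side, because $\si(e, \widetilde\al(c))$ is independent of the representative; equating the two yields \eqref{donald2}.

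There is no genuine obstacle here: once Proposition \ref{solissima} is invoked, the argument is a one-line chain-rule substitution. The only point requiring care is the bookkeeping of the shift $p \mapsto p + \langle\om, e\rangle$, checking that the $\om$-dependence introduced by differentiating $\HH^\om$ is \emph{exactly} cancelled by the $\om$-dependence of $\langle du, e\rangle$ on the Aubry set, leaving the manifestly representative-independent quantity $\HH_p\big(e, \si(e, \widetilde\al(c))\big)$.
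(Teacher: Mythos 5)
Your proof is correct, but it takes a genuinely different route from the paper's. The paper sets $q_e := \frac{\partial}{\partial p}\HH^\om(e,\langle du,e\rangle)$ and shows by direct computation that $q_e\,\langle dv,e\rangle = \LL^{\om'}(e,q_e) + \HH^{\om'}(e,\langle dv,e\rangle)$; since equality in the Fenchel--Young inequality \eqref{donald1} characterizes the derivative, this forces $q_e = \frac{\partial}{\partial p}\HH^{\om'}(e,\langle dv,e\rangle)$. The key input there is the relation $dv = du - dw$ on $\A_c$ (where $\om' = \om + dw$ for some $w\in\Cf^0(\G,\R)$), which follows from the uniqueness-of-differentials part of Proposition \ref{solissima} applied to the two subsolutions $v$ and $u-w$ of the same equation. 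You instead use the quantitative part of Proposition \ref{solissima}, namely $\langle du,e\rangle = \si^\om(e,\widetilde\al(c))$, together with \eqref{sigmaom} and the chain rule, to identify \emph{both} sides of \eqref{donald2} with the manifestly representative-free number $\frac{\partial}{\partial p}\HH\big(e,\si(e,\widetilde\al(c))\big)$. Your route is shorter and arguably more illuminating: it exhibits the common value explicitly, which in passing gives an intrinsic formula for the function $\mathcal Q_c$ of \eqref{defQc}, and it avoids the Lagrangian altogether; the paper's route stays inside the convex-duality formalism of \eqref{donald1}, which is the form in which the invariant quantity is then exploited in Theorem \ref{kamalaa} and Theorem \ref{proHJomega}. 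One small point worth making explicit in your write-up: the evaluation $\si(e,\widetilde\al(c))$ is legitimate because $\widetilde\al(c)\geq a_0\geq a_e$, so the critical value lies in the domain $[a_e,+\infty)$ of $\si(e,\cdot)$; this is recorded in the paper just before Proposition \ref{sottosolacritica}.
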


 \begin{proof}  We set
 \[q_e:= \frac\partial{\partial p}\HH^\om(e,\langle du,e\rangle) \txt{for $e \in \A_c$.}\]
 We have that $\om'= \om + dw$ for some $w \in \Cf^0(\G.\R)$, and consequently
 \[d v = du - dw.  \]
 Let $e \in \A_c$, then keeping in mind \eqref{donald1} we have
 \begin{eqnarray*}
  q_e \, \langle dv, e\rangle &=& q_e \, \langle du, e\rangle - q_e \, \langle dw, e\rangle \\
   &=& \LL^\om(e,q_e) + \HH^\om (e,\langle du,e\rangle) - q_e \, \langle dw, e\rangle\\
   &=& \LL(e,q_e) - q_e \langle \om, e \rangle + \HH (e,\langle du -dw + dw + \om,e\rangle ) - q_e \, \langle dw , e \rangle \\ &=& \LL^{\om'}(e,q_e) + \HH^{\om'}(e,dv).
 \end{eqnarray*}
 This proves \eqref{donald2}.
 \end{proof}

\smallskip

We denote by $\mathcal Q_c: \A_c \to \R$ the function
 \begin{equation}\label{defQc}
 e \longmapsto \frac \partial{\partial p}\HH^\om(e,\langle du,e\rangle).
 \end{equation}
 by the monotonicity properties  of $\HH^\om(e,\cdot)$, $\mathcal Q_c(e)$ is non--negative  for any $ e \in \A_c$.

\medskip

\section{Weak KAM and Aubry--Mather} \label{WKAM}
In this section we put in relation weak KAM theory and Aubry Mather theory on graphs.

\smallskip

\subsection{Mather's $\alpha$ function and critical value}
\begin{Theorem}\label{kamalaa} Given  $c \in H^1(\G,\R)$ and $\om \in \Cf^1(\Gamma,\R)$  of cohomology class $c$,  we have:
\begin{itemize}
  \item[{\bf (i)}] $\widetilde\al(c)$  and $ \al(c)$ coincide, {\it i.e.}, the critical value of $\HH^\om$ and the minimal action of Mather measures of cohomology class $c$ are the same;
  \item[{\bf (ii)}] {if an irreducible measure belongs to  $\bM_c$, then it  is  supported on a  circuit $\zeta$ such that $\si^\om(\zeta,\al(c))=0$;}
    \item[{\bf (iii)}] { if $\zeta=(e_i)_{i=1}^N$ is a circuit such that $\si^\om(\zeta,\al(c))=0$ and ${\mathcal Q}_c(e_i) \neq 0$ for all $i=1,\ldots, N$, then there exists an irreducible Mather measure supported on a parametrization of $\zeta$.
    }
\end{itemize}
\end{Theorem}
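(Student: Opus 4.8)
The plan is to route everything through a single Fenchel-type inequality that bridges the variational (action) and the metric (intrinsic length) descriptions. From the $\om$--version of \eqref{traviata}, namely $\LL^\om(e,q)=\max_{a\geq a_e}\big(q\,\si^\om(e,a)-a\big)$, I read off that for every level $a\geq a_0$ and every $q\geq 0$
\[ \LL^\om(e,q)\ \geq\ q\,\si^\om(e,a)-a, \]
with equality precisely when $q=\tfrac{\partial}{\partial p}\HH^\om(e,\si^\om(e,a))$ (the equality case of Young's inequality, since $\HH^\om(e,\si^\om(e,a))=a$). Evaluating at $a=\widetilde\al(c)$ is what drives both parts: the quantity $\mathcal Q_c$ of \eqref{defQc} is exactly the speed realizing equality.

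For part {\bf (i)} I first bound the action below on parametrized circuits. If $\xi=(e_i,q_i,1/q_i)_{i=1}^M$ has all $q_i>0$ and is supported on a circuit $\zeta$, summing the inequality at $a=\widetilde\al(c)$ gives
\[ \int\LL^\om\,d\mu_\xi=\frac 1{T_\xi}\sum_i\frac1{q_i}\LL^\om(e_i,q_i)\ \geq\ \frac{\si^\om(\zeta,\widetilde\al(c))}{T_\xi}-\widetilde\al(c)\ \geq\ -\widetilde\al(c), \]
the last step by Proposition \ref{sottosolacritica}{\bf (i)}; the equilibrium measures $\de(e,0)$ are handled directly since their action is $-a_e\geq-a_0\geq-\widetilde\al(c)$. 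To pass to all closed measures I pick $\mu\in\bM_c$ (the minimum defining $\al(c)$ is attained), which by Proposition \ref{postmin} and Theorem \ref{premamather} is a convex combination of circuit occupation measures; affineness of the action then yields $-\al(c)=\int\LL^\om\,d\mu\geq-\widetilde\al(c)$, i.e. $\al(c)\leq\widetilde\al(c)$. For the reverse inequality I use Proposition \ref{sottosolacritica}{\bf (ii)}: there is a cycle of vanishing intrinsic length, and decomposing it into circuits (each of nonnegative, hence zero, length) produces a circuit $\zeta$ with $\si^\om(\zeta,\widetilde\al(c))=0$, whose edges lie in $\A_c$. Parametrizing $\zeta$ by the equality speeds $q_i=\mathcal Q_c(e_i)$ of Lemma \ref{donald}/Proposition \ref{solissima} (or taking $\de(e,0)$ if $\zeta=(e,-e)$ is an equilibrium circuit) turns every inequality above into an equality and gives a closed measure of action $-\widetilde\al(c)$; hence $\al(c)\geq\widetilde\al(c)$, and $\al(c)=\widetilde\al(c)$.

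For part {\bf (ii)}, Proposition \ref{panza} identifies the irreducible elements of $\bM$ with occupation measures of parametrized circuits, so it suffices to characterize when such a $\mu_\xi$ attains the minimal action $-\al(c)=-\widetilde\al(c)$. Reading the two inequalities in the last display as equalities, this happens if and only if simultaneously (a) $\si^\om(\zeta,\widetilde\al(c))=0$, i.e. $\zeta$ has vanishing intrinsic length, and (b) $q_i=\mathcal Q_c(e_i)$ for every $i$, which is precisely the meaning of the ``suitable parametrization.'' The degenerate instance is the equilibrium circuit $\de(e,0)$, which lies in $\bM_c$ exactly when $a_e=\widetilde\al(c)$, i.e. when $(e,-e)$ has vanishing intrinsic length, consistently with $\mathcal Q_c(e)=0$.

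The main obstacle, and the place where the graph structure bites, is the bookkeeping around vanishing speeds. The equality speed $\mathcal Q_c(e)$ vanishes exactly when $\widetilde\al(c)=a_e$, and by Proposition \ref{capizzi}{\bf (iii)} a parametrized circuit cannot mix zero and positive speeds except as an equilibrium circuit; so a zero--length circuit carrying both kinds of edges supports no irreducible Mather measure, and only the edgewise--positive circuits and the pure equilibrium circuits do. Keeping these two regimes apart — while invoking Lemma \ref{donald} to know that $\mathcal Q_c$ is well defined on $\A_c$ and independent of the representative $\om$ — is the delicate part; the analytic content is otherwise entirely contained in the single Fenchel inequality and its equality case.
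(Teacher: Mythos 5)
Your proposal is correct, and on the core constructions it follows the same path as the paper: the minimizing measure is obtained by parametrizing a circuit of vanishing intrinsic length with the speeds $\mathcal Q_c$ of \eqref{defQc}, and the converse half of {\bf (ii)} is the same Fenchel equality analysis (the paper runs it through \eqref{donald1} rather than naming the equality case of Fenchel--Young, but it is the same computation). Where you genuinely differ is the inequality $\al(c)\le\widetilde\al(c)$. The paper proves it in one stroke for \emph{every} closed measure: testing against a critical subsolution $u$, Fenchel--Young gives $\LL^\om(e,q)\ge q\,\langle du,e\rangle-\HH^\om(e,\langle du,e\rangle)$; the first term integrates to zero by closedness, and the second is bounded below by $-\widetilde\al(c)$ by the subsolution property. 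You instead take $p=\si^\om(e,\widetilde\al(c))$, obtain the bound only for circuit occupation measures and equilibrium measures via Proposition \ref{sottosolacritica} {\bf (i)}, and then transfer it to a minimizer of the action through Proposition \ref{postmin} and Theorem \ref{premamather}. Both work: the paper's route is shorter, needs no structure theorems, and yields the bound for all closed measures at once; yours stays entirely within the metric language of intrinsic lengths and makes visible why circuits are the only objects that matter. You also make explicit two steps the paper leaves tacit, namely the decomposition of the zero-length cycle of Proposition \ref{sottosolacritica} {\bf (ii)} into zero-length circuits, and the identification of $\mathcal Q_c$ with the Fenchel--Young equality speed on $\A_c$ (via Proposition \ref{solissima}).

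One point of phrasing to tighten (a looseness the paper itself shares): in the reverse inequality, if the zero-length circuit $\zeta$ contains an edge $e$ with $\mathcal Q_c(e)=0$ but $\zeta$ is not an equilibrium circuit, then neither the $\mathcal Q_c$-parametrization nor your parenthetical applies to $\zeta$ itself. The fix is already contained in your last paragraph: $\mathcal Q_c(e)=0$ forces $\widetilde\al(c)=a_e$, so $\de(e,0)$ is a closed measure of action $\LL^\om(e,0)=-a_e=-\widetilde\al(c)$; one simply abandons $\zeta$ and uses the equilibrium circuit based on $e$, which then has vanishing intrinsic length. Stating this explicitly closes the only case your displayed construction does not literally cover, and it does not alter the structure of your argument.
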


We remark that Item {\bf (iii)} in Proposition \ref{kamalaa} might not hold if ${\mathcal Q}_c$ vanishes on some of the edges forming the circuit $\zeta$ of vanishing intrinsic length; see also Remark \ref{aubrymather}.

\smallskip

\begin{proof} We  denote by  $u$  a subsolution to  {\rm (HJ$^\omega_{\widetilde \al(c)}$)}. Taking into account the definition of Lagrangian, we get  for any closed probability measure $\mu$
\[\int \LL^\om(e,q) \, d\mu \geq \int \big [ q \, \langle du,e\rangle - \HH^\om(e,\langle du,e\rangle )\big ]  \, d\mu = - \widetilde\al(c),\]
which shows that
\begin{equation}\label{kamalaa1}
 -\al(c) \geq -\widetilde\al(c).
\end{equation}
Let $\xi=(e_i)_{i=1}^M$ be a circuit with
\[\si^\om(\xi,\widetilde\al(c)) = \sum_i \si^\om(e_i,\widetilde\al(c))=0\]
so that $\xi$ is contained in $\A_c$.
We have by Proposition \ref{solissima} and \eqref{donald1} that
\[\widetilde \al(c)=  \HH^\om(e_i,\< du,e_i\ra) = \si^\om(e_i,\widetilde\al(c)) \, \mathcal Q_c(e_i) - \LL^\om (e_i,\mathcal Q_c(e_i)). \]
We first assume that $\mathcal Q_c(e_i) \neq 0$   for every $i$, then we get
\[\si^\om(e_i,\widetilde\al(c))  = \frac 1{\mathcal Q_c(e_i)} \, \big ( \widetilde \al(c) + \LL^\om (e_i,\mathcal Q_c(e_i)) \big ).\]
By summing over $i$, we further obtain
\begin{equation}\label{kamalaa2}
0= \sum_{i=1}^M \frac 1{\mathcal Q_c(e_i)} \, \LL^\om (e_i,\mathcal Q_c(e_i)) + \left (\sum_{i=1}^M  \frac1{\mathcal Q_c(e_i)} \right ) \widetilde\al(c).
\end{equation}
We denote by $\mu_\xi$ the occupation measure associated with the parametrized circuit $(e_i, \mathcal Q_c(e_i),1/\mathcal Q_c(e_i))_{i=1}^M$, and deduce from \eqref{kamalaa2}
\[\int \LL^\om \, d\mu_\xi= - \widetilde\al(c)\]
which together with \eqref{kamalaa1} proves the item {\bf (i)}, in the case $\mathcal Q_c(e_i) \neq 0$ for every $i$; {in particular, this also proves {\bf (iii)}}.\\
 If some $\mathcal Q_c(e_i)$ vanishes, then according to Proposition \ref{capizzi},  $\xi$ is an equilibrium circuit based on some edge $e$, namely  $\xi =((e,0,T),(-e,0,S))$ for some $T,S>0$. In this case we have
\[\widetilde\al(c)=a_0=a_e \]
and
\[\LL^\om(e,0) = \LL^\om(-e,0)= - a_e = - \widetilde\al(c).\]
The  occupation measure related to $\xi$  is $\de(e,0)$, and we get
\[\int \LL^\om \, d \de(e,0)= \LL^\om(e,0)= -\widetilde \al(c).\]
This ends the proof of item {\bf (i)}. 
{Let $\mu \in \bM_c$ be an irreducible Mather measure. Then, we distinguish two cases (see Proposition \ref{panza}):
\begin{itemize}
\item $\mu$ is the occupation measure supported on a parametrized cycle $(e_i,q_i,1/q_i)_{i=1}^M$, with $q_i\neq 0$ for all $i=1,\ldots,M$. Denoting by $T:= \sum_{i=1}^M \frac 1{q_i}$ and  $\zeta:=(e_i)_{i=1}^M$, we get:
\begin{eqnarray}
 - \al(c)&=&  \int \LL^\om \,d \mu  =\frac 1T \, \sum_{i=1}^M \frac 1{q_i} \, \LL^\om(e_i,q_i)  \nonumber \\ &\geq& \frac 1T \, \left [\si^\om(\zeta,\al(c)) - T
\, \al(c)  \right ]  \geq -\al(c), \label{kamalaa3}
\end{eqnarray}
which implies  $\si^\om(\zeta,\al(c))=0$. 
\item Otherwise, $\mu =\delta(e,0)$, for some $e\in \EE$; in this case 
 we must have $\al(c)= a_e$
and
 \[ \si^\om (e,\al(c)) + \si^\om (-e,\al(c))=0,\]
hence the thesis follows with  $\zeta=(e,-e)$.
\end{itemize}
This concludes the proof of {\bf (ii)}.
}

\end{proof}

We deduce:

\begin{Corollary}\label{kamalab}  Let  $c \in H^1(\G,\R)$, for any $(e,q) \in \widetilde M_c$ we have
\[ \LL^\om(e,q)= \si^\om(e,\al(c)) \, q - \al(c).\]
\end{Corollary}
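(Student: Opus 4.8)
The plan is to read the identity as the equality case of the Fenchel--Young inequality attached to the convex duality between $\HH^\om(e,\cdot)$ and $\LL^\om(e,\cdot)$, tested against a critical subsolution. First I would record that a point $(e,q)\in\widetilde\MM_c$ lies in $\supp\mu$ for some Mather measure $\mu\in\bM_c$ (Definition \ref{ciop}). By Theorem \ref{premamather} and Proposition \ref{postmin}, such a $\mu$ is a convex combination of occupation measures supported on circuits, and by Theorem \ref{kamalaa} {\bf (ii)} each circuit $\zeta$ occurring in this decomposition satisfies $\si^\om(\zeta,\al(c))=0$. Since any cycle of vanishing intrinsic length is contained in $\A_c$ by the very definition of the Aubry set, this yields $\supp_\EE\mu\subseteq\A_c$; in particular $e\in\A_c$.

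Next I would fix a subsolution $u$ of the critical equation {\rm (HJ$^\om_{\widetilde\al(c)}$)}. Because $\LL^\om(e',\cdot)$ is the convex conjugate of $\HH^\om(e',\cdot)$, the choice $p=\langle du,e'\rangle$ in the Fenchel--Young inequality gives
\[ \LL^\om(e',q') \geq q'\,\langle du,e'\rangle - \HH^\om(e',\langle du,e'\rangle) \qquad \text{for all } (e',q'). \]
Integrating against $\mu$, the linear term collapses, since $\int \langle du, q'e'\rangle\,d\mu = \int du\,d\mu = 0$ because $\mu$ is closed and $du\in\mathrm{Im}\,d$. On the support of $\mu$, Proposition \ref{solissima} gives $\HH^\om(e',\langle du,e'\rangle)=\widetilde\al(c)$ for every $e'\in\A_c$, so the Hamiltonian term integrates to the constant $\widetilde\al(c)$. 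Hence $\int\LL^\om\,d\mu\geq -\widetilde\al(c)=-\al(c)$, using Theorem \ref{kamalaa} {\bf (i)}. As $\mu$ is a Mather measure, $\int\LL^\om\,d\mu=-\al(c)$, so equality must hold in the Fenchel--Young inequality $\mu$-almost everywhere.

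Finally, invoking the other half of Proposition \ref{solissima}, namely $\langle du,e'\rangle=\si^\om(e',\widetilde\al(c))$ on $\A_c$, the equality case reads $\LL^\om(e',q')=q'\,\si^\om(e',\al(c))-\al(c)$ for $\mu$-a.e. $(e',q')$, in particular at the prescribed $(e,q)$, which is exactly the assertion. I would dispose separately of the degenerate situation in which $(e,q)=(e,0)$ arises from an equilibrium circuit (Proposition \ref{capizzi}): there $\al(c)=a_e$ and $\LL^\om(e,0)=-a_e$, so both sides reduce to $-\al(c)$. The only genuine work lies in the first step, certifying $\supp_\EE\mu\subseteq\A_c$, after which everything is a routine equality-case analysis of a single convex-duality inequality, requiring no estimate beyond those already assembled in Theorem \ref{kamalaa} and Proposition \ref{solissima}.
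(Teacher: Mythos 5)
Your argument is correct, but it is organized differently from the paper's deduction. The paper reads the corollary directly off the proof of Theorem \ref{kamalaa}: any point of $\widetilde{\MM}_c$ lies in the support of an irreducible Mather measure, i.e.\ an occupation measure $\mu_\xi$ on a parametrized circuit $\xi=(e_i,q_i,1/q_i)_i$, and for such a measure the chain \eqref{kamalaa3} collapses to equalities; since the middle inequality there is exactly the termwise Fenchel--Young bound $\LL^\om(e_i,q_i)\geq \si^\om(e_i,\al(c))\,q_i-\al(c)$ (test slope $\si^\om(e_i,\al(c))$, which lies on the level set $\HH^\om=\al(c)$), and the closing inequality is the nonnegativity of intrinsic lengths of cycles at the critical value (Proposition \ref{sottosolacritica}~{\bf (i)}), equality must hold edge by edge, which is the assertion; the equilibrium case $\de(e,0)$ is checked by hand, as you also do. You instead test Fenchel--Young at the slope $\langle du, e\rangle$ of a critical subsolution, integrate against the whole Mather measure, kill the linear term by closedness, and pin both the Hamiltonian term and the slope via Proposition \ref{solissima}. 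This is precisely the equality-case refinement of the computation by which the paper proves part {\bf (i)} of Theorem \ref{kamalaa}; it treats an arbitrary Mather measure in one stroke and makes the weak KAM mechanism transparent, but it is not logically lighter: Proposition \ref{solissima} is only available on $\A_c$, so you must first establish $\supp_\EE \mu\subseteq\A_c$, and for that you invoke the decomposition into circuits and Theorem \ref{kamalaa}~{\bf (ii)} --- the very circuit computation that the paper's route uses directly.

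Two steps deserve tightening. (a) To apply Theorem \ref{kamalaa}~{\bf (ii)} to the circuits of your decomposition you need to know that each occupation measure occurring in it belongs to $\bM_c$ (and is then irreducible by Proposition \ref{panza}); this is not part of Theorem \ref{premamather} or Proposition \ref{postmin}, but follows from minimality of $\mu$ together with the fact that each component has action at least $-\al(c)$ --- it is the content of the theorem in Section \ref{sec6} asserting that $\bM_c$ is the convex hull of its irreducible Mather measures, which you should cite or reprove at this point. (b) Equality ``$\mu$-a.e.'' yields equality at the given point $(e,q)$ because, by Proposition \ref{postmin}, the restriction of $\mu$ to the fiber over $e$ is a Dirac mass, so the point $(e,q)\in\supp\mu$ carries positive $\mu$-measure (alternatively: the defect in the Fenchel--Young inequality is continuous and nonnegative, so the set where it is positive is open and hence disjoint from $\supp\mu$); this deserves a line in the write-up.
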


Recalling the definition of the Aubry set $\A_c \subset \EE$, we further  derive:

\smallskip
\begin{Corollary}\label{gigella} Given $c \in H^1(\Gamma, \R)$, we have
\[ \pi_\EE \left ( \widetilde{\MM}_c \right ) =: \MM_c \subseteq \A_c.\]
{In particular, equality holds if $c$ is such that $\alpha(c) > \min \alpha$.}
\end{Corollary}

\medskip
\begin{Remark}\label{aubrymather} Note that in general $\MM_c$ might be a strict subset of $\A_c$. The reason is that we can find a  circuit with vanishing intrinsic length  not admitting a suitable  admissible parametrization, so that we do not find an occupation measure supported on it.
An example is given by a graph with two vertices, say $x$ and $y$, and two edges $e$, $f$ connecting them. We assume that  {$e\neq -f$ and that $\oo(e)= \tt(f)= x$ and $\tt(e)=\oo(f)=y$}. We consider the Hamiltonian defined as follows:
\[\HH(e,p) = \HH(-e,p)= p^2, \; \HH(f,p) =(p+1)^2 -1, \; \HH(-f,p)= (-p+1)^2  -1.\]
It is easy to check that $0$ is the critical value and the vanishing function is a solution of the corresponding  critical equation. We moreover have
\[\si(e,0)= \si(-e,0)=0, \;  \si(f,0)=0,\, \si(-f,0)=2.\]
We therefore see that $(e,-e)$ is an equilibrium circuit so that  $\de(e,0)$ is a Mather measure and $e$, $-e$ belong to the Mather set. We also have that the circuit $(e,f)$ has vanishing intrinsic length, so that $f$ belongs to the Aubry set, however, according to the definition of parametrized path, $(e,f)$ does not admit any admissible parametrization with vanishing speed on $e$, and $f$ does not belong to the Mather set.

\end{Remark}
\medskip

Next theorem refines the information provided in Corollary \ref{ironic} and Remark \ref{ironicbis}.

\begin{Theorem}\label{proHJomega} Given $c \in H^1(\Gamma,\R)$,
\[\widetilde{\MM}_c= \{(e, \mathcal Q_c(e))  \mid e \in \MM_c\}.\]
\end{Theorem}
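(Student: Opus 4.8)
The plan is to combine the two results established immediately beforehand — the identity $\pi_\EE(\widetilde{\MM}_c) = \A_c$ and Corollary \ref{kamalab} — with the equality case of the Fenchel--Young inequality to pin down the fiber coordinate of each point of $\widetilde{\MM}_c$.

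First I would record a characterization of $\mathcal Q_c$ as a Legendre-dual speed. Fix $\om$ of class $c$ and a critical subsolution $u$. By Proposition \ref{solissima}, together with Theorem \ref{kamalaa} {\bf (i)} (so that $\widetilde\al(c)=\al(c)$), one has $\langle du, e\rangle = \si^\om(e,\al(c))$ and $\HH^\om(e,\si^\om(e,\al(c)))=\al(c)$ for every $e \in \A_c$; hence by \eqref{defQc},
\[ \mathcal Q_c(e) = \frac{\partial}{\partial p}\HH^\om\big(e,\si^\om(e,\al(c))\big). \]
Set $p_0 := \si^\om(e,\al(c))$. The Fenchel--Young inequality for the conjugate pair $\HH^\om(e,\cdot)$, $\LL^\om(e,\cdot)$ reads $\LL^\om(e,q) - p_0\,q \geq -\HH^\om(e,p_0) = -\al(c)$, with equality if and only if $q = \frac{\partial}{\partial p}\HH^\om(e,p_0) = \mathcal Q_c(e)$; moreover, since $\LL^\om(e,\cdot)$ is \emph{strictly} convex, this minimizer is unique. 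Thus, for $e \in \A_c$, the equation
\[ \LL^\om(e,q) = \si^\om(e,\al(c))\,q - \al(c) \]
holds for one and only one value of $q \geq 0$, namely $q = \mathcal Q_c(e)$.

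With this characterization both inclusions become immediate. For $\subseteq$: if $(e,q)\in\widetilde{\MM}_c$, then $e \in \pi_\EE(\widetilde{\MM}_c) = \A_c$, and Corollary \ref{kamalab} asserts precisely that $q$ solves the displayed equation, whence $q = \mathcal Q_c(e)$ and $(e,q)=(e,\mathcal Q_c(e))$. For $\supseteq$: given $e \in \A_c = \pi_\EE(\widetilde{\MM}_c)$, there is at least one $q$ with $(e,q)\in\widetilde{\MM}_c$, and by the previous step $q = \mathcal Q_c(e)$; hence $(e,\mathcal Q_c(e))\in\widetilde{\MM}_c$. One should also note that this description respects the identification $(e,0)\sim(-e,0)$ in $T\G$, since $\mathcal Q_c(e)=0$ iff $\al(c)=a_e=a_{-e}$ iff $\mathcal Q_c(-e)=0$.

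The only point requiring genuine care — and the step I would single out as the crux — is the Fenchel--Young identification: recognizing that the Lagrangian identity of Corollary \ref{kamalab} is exactly the equality case at $p_0=\si^\om(e,\al(c))$, and that strict convexity of $\LL^\om(e,\cdot)$ upgrades this to uniqueness of $q$. Everything else is bookkeeping with the already-established surjectivity $\pi_\EE(\widetilde{\MM}_c)=\A_c$. An alternative route avoiding Corollary \ref{kamalab} would invoke the Mather graph property (Corollary \ref{ironic}) to conclude that $\widetilde{\MM}_c$ is the graph of a function on $\A_c$, and then identify that function with $\mathcal Q_c$ via the explicit parametrized circuits $(e_i,\mathcal Q_c(e_i),1/\mathcal Q_c(e_i))$ built in the proof of Theorem \ref{kamalaa}; but the argument through Corollary \ref{kamalab} is the most direct.
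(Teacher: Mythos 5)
Your proof is correct and follows essentially the same route as the paper's: both rest on Proposition \ref{solissima}, Corollary \ref{kamalab}, and the recognition that the resulting Lagrangian identity is the Fenchel--Young equality case at $p_0=\si^\om(e,\al(c))=\langle du,e\rangle$, whose unique solution in $q$ is $\mathcal Q_c(e)$ by \eqref{donald1} together with strict convexity/differentiability. The only difference is organizational: the paper applies Corollary \ref{kamalab} to the support points of irreducible occupation measures (parametrized circuits), while you apply it pointwise on $\widetilde{\MM}_c$ and invoke $\pi_\EE(\widetilde{\MM}_c)=\A_c$ for both inclusions --- which is, if anything, a slightly more explicit write-up, since the paper leaves the reverse inclusion and the uniqueness step implicit.
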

\begin{proof} Let $\om$ be of cohomology $c$. We know from Proposition \ref{solissima}  that the differentials of all subsolutions $u$ to {\rm (HJ$^\omega_{\al(c)}$)}  coincide on $\A_c$ and satisfy
\begin{equation}\label{HJomega1}
  \<du   ,e\ra = \si^\om(e,\al(c)), \quad \HH^\om(e,\<du,e\ra)= \al(c).
\end{equation}
Let $\mu$ be an irreducible  occupation measure in $\bM_c$,  and assume that it corresponds to a parametrized circuit $\xi=(e_i,q_i,T_i)_{i=1}^M$. We derive  from Corollary \ref{kamalab} that
\[\LL^\om(e_i,q_i)= \si^\om(e_i,\al(c)) \, q_i -  \al(c) \txt{for  $i=1,\ldots, M$.}\]
This implies  by \eqref{HJomega1}
\[\LL^\om(e_i, q_i)  + \HH^\om(e,\<du,e\ra)= \<du  ,e\ra \, q_i,\]
which yields $q_i= \mathcal Q_c(e_i)$, for $i=1, \cdots, M$, in view of \eqref{donald1}.

\end{proof}

\medskip

\subsection{Minimizers of   Mather's $\al$ function} \label{singular}

\begin{Proposition}\label{prepoli}  The minimum of the function $\al$ is equal to $a_0$.
\end{Proposition}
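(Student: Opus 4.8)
The plan is to show that $\min \alpha = a_0$ by establishing the two inequalities $\min\alpha \le a_0$ and $\min\alpha \ge a_0$, using the identification $\alpha = \widetilde\alpha$ from Theorem \ref{kamalaa} together with the weak KAM characterizations recalled in Section \ref{sec7}.

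For the lower bound $\min\alpha \ge a_0$, I would simply invoke the inequality $\widetilde\alpha(c) \ge a_0$ for every $c \in H^1(\G,\R)$, which was already noted at the end of Section \ref{sec7} as a consequence of Remarks \ref{un} and \ref{due}. Since $\alpha(c) = \widetilde\alpha(c)$ by Theorem \ref{kamalaa} {\bf (i)}, this gives $\alpha(c) \ge a_0$ for all $c$, hence $\min\alpha \ge a_0$.

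For the upper bound $\min\alpha \le a_0$, the natural strategy is to exhibit a single closed measure whose $\omega$-modified action equals $-a_0$ for a suitable choice of cohomology class, since $-\alpha(c) = \min_{\mu\in\bM} \int \LL^\om\,d\mu$ by \eqref{defalpha}. Let $f \in \EE$ be an edge realizing the maximum $a_0 = a_f$, and consider the occupation measure $\de(f,0)$, which is closed by Remark \ref{golpetrump} (it is the equilibrium circuit based on $f$). Recalling that $\LL(e,0) = -a_e$ from \eqref{traviata} and that $\LL^\om(f,0) = \LL(f,0)$ since the correction term $\langle\om, qf\rangle$ vanishes at $q=0$, I would compute
\[
\int \LL^\om \, d\de(f,0) = \LL^\om(f,0) = \LL(f,0) = -a_f = -a_0,
\]
so that $-\alpha(c) \le -a_0$, i.e. $\alpha(c) \le a_0$, for any cohomology class $c$. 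Combined with the lower bound, this forces $\alpha(c) = a_0$ for this $c$, hence $\min\alpha \le a_0$.

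I expect no serious obstacle here; the argument is essentially a matching of the two value-function descriptions. The one point requiring mild care is the bookkeeping around the $\omega$-modified Lagrangian: one must check that $\LL^\om(f,0)$ genuinely equals $-a_0$ independently of the representative $\om$, which follows because the modification $-\langle\om, qf\rangle$ is evaluated at $q=0$ and because $a_f$ does not depend on $\om$ (Remark \ref{un}). A second, equally viable route avoiding cohomology altogether is to argue directly that $\delta(f,0) \in \bM_c$ for the class attaining the minimum, using Proposition \ref{mathergraph2} {\bf (ii)}, where the appearance of both $f$ and $-f$ in a Mather set forces $\alpha(c) = \min\alpha$; since $\delta(f,0)$ is supported on $(f,0)\sim(-f,0)$, this pins down the minimizing class and its value as $a_0$.
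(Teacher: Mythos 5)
Your lower bound is fine, but your upper bound collapses on a sign error, and that is where all the content of the proposition lies. From \eqref{defalpha} we have $-\al(c)=\min_{\mu\in\bM}\int \LL^\om\,d\mu$, so exhibiting the single closed measure $\de(f,0)$ with $\int \LL^\om\,d\de(f,0)=-a_0$ only shows $\min_{\mu\in\bM}\int\LL^\om\,d\mu\le -a_0$, i.e. $-\al(c)\le -a_0$; multiplying by $-1$ reverses the inequality, so this gives $\al(c)\ge a_0$ --- the lower bound again --- and not $\al(c)\le a_0$ as you wrote. A sanity check confirms the step cannot be right: if the computation really yielded $\al(c)\le a_0$ for \emph{every} $c$, then combined with the lower bound $\al$ would be identically equal to $a_0$, contradicting the coercive (superlinear) character of $\al$ whenever $H^1(\G,\R)\neq 0$. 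To prove $\min\al\le a_0$ one must show that for \emph{some} class $c$, \emph{every} closed measure has $\LL^\om$-action at least $-a_0$ --- equivalently, that $\de(f,0)$ is genuinely action-minimizing in $\bM$ for that class; testing the action on one measure can never deliver this direction.

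Your second route points toward the paper's actual proof, but it assumes exactly the missing step, namely that $\de(f,0)\in\bM_c$ (or that both $f$ and $-f$ lie in $\pi_\EE\big(\widetilde{\MM}_c\big)$) for a minimizing class $c$. The paper supplies this as follows: let $c$ be a minimizer of $\al$ (which exists by coercivity); then $0\in\partial\al(c)$, so by Proposition \ref{proprelationalphabeta} {\bf (ii)} there is $\mu\in\bM_c$ with $\rho(\mu)=0$, and the vanishing of the rotation vector forces some edge $f$ to appear in $\supp_\EE\mu$ together with $-f$. Proposition \ref{mathergraph2} {\bf (ii)} then gives $\mathcal Q_c(f)=\mathcal Q_c(-f)=0$ and $\al(c)=\min\al$, and the weak KAM identification (Proposition \ref{solissima} together with the definition \eqref{defQc} of $\mathcal Q_c$) yields $\al(c)=\HH^\om(f,\langle du,f\rangle)=a_f\le a_0$, where $u$ is a critical subsolution, since $\langle du,f\rangle$ must be the minimizer of $\HH^\om(f,\cdot)$. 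Combined with the (correct) lower bound $a_0\le\min\al$ from Theorem \ref{kamalaa} {\bf (i)}, this closes the argument. So the skeleton you sketched at the end is salvageable, but the production of a Mather measure whose support contains both $f$ and $-f$ --- via the subdifferential relation at a minimizer of $\al$ --- is the real work, and it is absent from your proposal.
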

\begin{proof} The function $\al$ admits minimum because of its coercive character. Assume $c$ to be a minimizer of $\al$ and denote by $\om\in\Cf^1(\Gamma,\R)$ a representative of the cohomology class $c$. Then  there exists $\mu \in \bM_c$ with $\rho(\mu)=0$
in view of Proposition \ref{proprelationalphabeta} {\bf (ii)}. Taking into account the definition of rotation vector, we derive that for some edge $f$, both   $f$ and  $-f$ belong to $\supp_\EE \mu$.  This implies by Proposition \ref{mathergraph2} {\bf (ii)} that $\mathcal Q_c(f)=\mathcal Q_c(-f) =0$ and $\al(c)=\min \alpha$.
Since  $\mathcal Q_c(f)=0$, then:
\[ \frac{\partial}{\partial p}\HH^\omega (f,\< du,f \ra)= {\mathcal Q}_c(f)= 0 ,\]
where $u$ is a subsolution to  {\rm (HJ$^\omega_{\al(c)}$)}.
 We deduce that $\< du,f \ra$ is a minimizer of $\HH^\om(f,\cdot)$ and consequently
$$
\al(c) = \HH^\om(f,\< du,f \ra)= a_f \leq a_0 \leq \min\alpha,
$$
which implies that $\alpha(c)=\min \alpha = a_0$.
\end{proof}

\smallskip

\begin{Corollary}\label{poli} An element $c \in H^1(\G, \R)$ is a minimizer of $\al$ if and only if the function $\mathcal Q_c$ vanishes at some $e \in {\mathcal M}_c$.
\end{Corollary}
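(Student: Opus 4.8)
The plan is to establish the two implications of the biconditional separately, and in both cases the work has essentially already been done inside the proof of Proposition \ref{prepoli}; the corollary is really a matter of isolating and assembling those pieces. Throughout I would fix a representative $\om \in \Cf^1(\G,\R)$ of the class $c$ and a subsolution $u$ of {\rm (HJ$^\om_{\al(c)}$)} (recall $\al(c) = \widetilde\al(c)$ by Theorem \ref{kamalaa}), so that $\mathcal Q_c$ is given on $\A_c$ by \eqref{defQc}.

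First I would treat the forward implication: if $c$ minimizes $\al$, then $0 \in \partial\al(c)$, so Proposition \ref{proprelationalphabeta}{\bf (ii)} produces a Mather measure $\mu \in \bM_c$ with $\rho(\mu) = 0$. Reading off the rotation vector exactly as in Proposition \ref{prepoli}, one locates an edge $f$ with $f, -f \in \supp_\EE\mu$, and Proposition \ref{mathergraph2}{\bf (ii)} then forces $\mathcal Q_c(f) = \mathcal Q_c(-f) = 0$. Since $(f, \mathcal Q_c(f)) \in \widetilde{\MM}_c$ and $\pi_\EE(\widetilde{\MM}_c) = \A_c$, the edge $f$ lies in $\A_c$; hence $\mathcal Q_c$ vanishes at a point of $\A_c$.

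For the converse, I would suppose $\mathcal Q_c(e) = 0$ for some $e \in \A_c$. By \eqref{defQc} this says $\frac{\partial}{\partial p}\HH^\om(e, \langle du, e\rangle) = 0$, and strict convexity of $\HH^\om(e,\cdot)$ forces $\langle du, e\rangle$ to be its global minimizer, whose value is $a_e$ (independent of $\om$ by Remark \ref{un}). Thus $\al(c) = \HH^\om(e, \langle du, e\rangle) = a_e \le a_0$. On the other hand $\al(c) = \widetilde\al(c) \ge a_0$ always, and $\min\al = a_0$ by Proposition \ref{prepoli}; combining these gives $\al(c) = a_0 = \min\al$, that is, $c$ is a minimizer of $\al$.

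I do not anticipate a genuine obstacle: each direction reuses a fragment of the argument for Proposition \ref{prepoli}. The two points worth stating cleanly are that the edge produced in the forward direction genuinely belongs to $\A_c$ (via $\pi_\EE(\widetilde{\MM}_c) = \A_c$), and that the computation $\al(c) = a_e$ in the converse relies on using the correct level $\widetilde\al(c) = \al(c)$ built into the definition of $\mathcal Q_c$.
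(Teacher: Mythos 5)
Your proof is correct, and both implications go through as written, but the forward direction follows a genuinely different route from the paper's. The converse implication (if $\mathcal Q_c$ vanishes at some $e \in \A_c$ then $c$ minimizes $\al$) coincides with the paper's: it is precisely the tail of the proof of Proposition \ref{prepoli}, which the paper's proof of the corollary simply cites. For the forward implication, however, the paper does not return to measures at all: it invokes the \emph{statement} of Proposition \ref{prepoli} to get $\al(c)=a_0$, hence $\al(c)=a_f$ for an edge $f$ maximizing $e \mapsto a_e$; it then observes that $f \in \A_c$ (the equilibrium circuit $(f,-f)$ has vanishing intrinsic length at the level $a_f$, by \eqref{cumpa} and Remark \ref{un}), and concludes from Proposition \ref{solissima} that any critical subsolution satisfies $\HH^\om(f,\langle du,f\rangle)=a_f$; since $a_f$ is the minimum value of $\HH^\om(f,\cdot)$, the derivative there vanishes, i.e. $\mathcal Q_c(f)=0$. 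You instead re-run the measure-theoretic first half of the proof of Proposition \ref{prepoli}: $0 \in \partial\al(c)$, a Mather measure with zero rotation vector, an edge carried with both orientations in its support, then Proposition \ref{mathergraph2}{\bf (ii)}. Both routes are sound. The paper's is leaner once Proposition \ref{prepoli} is on record (only its conclusion $\min\al=a_0$ is needed) and it pinpoints the vanishing edge concretely as one realizing $a_0$; yours repeats work already done inside Proposition \ref{prepoli}, and it needs the identification of Mather-set points with values of $\mathcal Q_c$ (Theorem \ref{proHJomega}, or Proposition \ref{solissima} together with \eqref{donald1}) and the equality $\pi_\EE(\widetilde{\MM}_c)=\A_c$ to land in the Aubry set --- bridges you do state explicitly, so no gap results.
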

\begin{proof}
The fact that if
$\mathcal Q_c(e)=0$  for some $e \in \MM_c$ then $c$ is a minimizer of $\al$, has been proved in Proposition \ref{prepoli}.\\
 Conversely, if $c$ is a minimizer of $\al$, then  $\al(c)=a_f$ for some $f \in \EE$, by  Proposition \ref{prepoli}. This implies that $f \in \MM_c$, moreover, if $u$ is a subsolution to
{\rm (HJ$^\omega_{a_f}$)}, where $\om$ is a representative of $c$, we get
\[\HH^\omega(f,\<du,f\ra)= a_f.\]
Taking into account that $a_f$ is the minimum of $\HH^\omega(f,\cdot)$, we finally have
\[ \mathcal Q_c(f)= \frac \partial{\partial p} \HH^\omega(f, \<du,f\ra)  =0.\]
\end{proof}

\bigskip

\appendix

\section{  From networks to graphs }\label{netto}

In this appendix, we describe how it is possible to develop Aubry-Mather theory on networks, by means of the discrete theory that we have developed on graphs.

Let us start by recalling the definition of network, as given in \cite{SiconolfiSorrentino}. We consider  a finite collection  $\EN$ of regular simple oriented
curves  in $\R^N$ parametrized over $[0,1]$. If $\ga \in \EN$, we denote by  $- \ga \in \EN$  the curve
\[- \ga(s)= \ga( 1 -s) \qquad\hbox{for $s \in [0,1]$,}\]
with the same support of $\ga$ and opposite orientation.   We further assume
\begin{equation}\label{netw}
    \ga((0,1)) \cap \ga'((0,1)) = \emptyset \qquad\hbox{whenever $\ga \neq
\pm \ga'$.}\\
\end{equation}

\smallskip

A {\it network} $\GG$   is a subset of $\R^N$ of the form
\[ \GG = \bigcup_{\ga \in \EN} \, \gamma([0,1]) \subset \R^N,\]
the curves in  $\EN$ are   called {\it arcs} of the network.

We call  {\it vertices} the initial  and terminal points of the arcs, and denote  by  $\VV$ the
sets of all such vertices.  We assume that the network  is finite and   connected, namely the number of arcs and vertices is finite and there is a finite concatenation of  arcs linking any pair of vertices.\\

\begin{Remark}\label{Riemannian}
This setting can be naturally extended to the case in which $\GG$
is embedded in a Riemannian manifold  $(M,g)$ (for example by means of Nash embedding theorem \cite{Nash}).
\end{Remark}

\smallskip
We can associate to any network $\GG$ a finite and connected
abstract graph $\G= (\VV, \EE)$  with the same vertices of the
network and edges corresponding to  the arcs.
More precisely, we consider an  abstract set $\EE$ with a bijection
\begin{equation}\label{defPsi}
  \Psi: \EE \longrightarrow \EN.
  \end{equation}
This induces  maps  $o : \EE \longrightarrow \VV$,
$-{\phantom{o}}: \EE \longrightarrow \EE $
 via
 \begin{eqnarray*}
   \oo(e) =  \Psi(e)(0)  \quad {\rm and} \quad
   - e = \Psi^{-1}(-{\Psi(e)}),
 \end{eqnarray*}
satisfying the properties in the definition of graph, see Section \ref{networks}. \\

\subsection{ Hamiltonians and Lagrangians on networks}  An Hamiltonian   on $\GG$ is a collection of Hamiltonians
\[H_\ga: [0,1] \times \R \to \R; \qquad (s,p) \mapsto H_\ga(s,p)\]
labeled by the arcs. We  assume  the compatibility conditions
\begin{equation}\label{ovgamma}
   H_{- \ga}(s,p)= H_{\ga}(1-s,-p) \qquad\hbox{for any $\ga \in \EN$.}
\end{equation}
As we will discuss with more detail hereafter, we can associate to the family $H_\ga$   an Hamiltonian $\HH(e, \cdot)$ on $\G$. Exploiting the results of \cite{SiconolfiSorrentino}, we see that  the corresponding Hamilton--Jacobi equations
\[H_\ga(s,(u \circ \ga)')= a  \qquad\hbox{on $(0,1)$ for $\ga \in \EN$,} \]
and
\[\max_{-e \in \EE_x} \mathcal H(e,\langle du,  e\rangle) =a \txt{for $x \in \VV$, $ a \in \R$}\]
are equivalent, in the sense that if $u: \GG \to \R$ is a (sub)solution of the former then its trace on$\VV$ solves the latter, and, conversely, any function $w: \VV \to \R$ solution of the latter can be uniquely extended on $\GG$ in such a way that the  extended function is solution of the former equation. In addition, in \cite{SiconolfiSorrentino}  we  developed in parallel weak KAM results for the two equations, proved that the two critical values coincide, define the corresponding Aubry sets, {\em etc.}...\\

The aim of this appendix to determine a set of rather natural  assumptions on the $H_\ga$'s such that the corresponding Hamiltonian on the graph $\G$ satisfies {\bf (H1)}, {\bf (H2)}. This will allow to take advantage of the output of this paper to provide an  Aubry--Mather theory on networks. \\

 We require the $H_\ga$'s to satisfy the following properties:
\begin{itemize}
    \item[{\bf (H1$^\prime$)}] $H_\ga$ is {\em continuous} in $(s,p)$, {\em differentiable} in $p$ for any fixed $s$, and such that the function
        \[(s,p) \mapsto \frac \partial{\partial p} H_\ga(s,p)\]
        is continuous;
    \item[{\bf (H1$^\prime$)}] $H_\ga$ is {\em superlinear} in $p$, uniformly in $[0,1]$, namely
    \begin{equation}\label{newborn1}
  \lim_{r \to + \infty} \min  \left \{ \frac{H_\ga(s,p)}p  \mid p > r,
  \, s \in [0,1] \right \} = + \infty;
\end{equation}
    \item[{\bf (H3$^\prime$)}]  $H_\ga$ is {\em strictly  convex} in $p$;
 \item[{\bf (H4$^\prime$)}]  the map  $s \longmapsto  \min_{p \in \R} H_\ga(s,p)$  is constant  in $[0,1]$, for  any given $\ga \in \EN$.\\
\end{itemize}

 We define $a_\ga= a_{-\ga}$ as the value of the constant function appearing in the assumption {\bf (H4')},
in other terms    the
sublevel of the Hamiltonian $H_\ga$ corresponding to $a_\ga$ is a singleton  for any $s$; we further denote by $p^\ga_s$ the minimizer of $H_\ga(s,\cdot)$. Therefore {\bf (H4')} reads
\[ H_\ga(s,p^\ga_s)= a_\ga  \txt{for any $s \in [0,1]$.}\]

\smallskip

\begin{Remark}
Actually condition {\bf (H4')} is required only for $\gamma \in \EN$ such that $a_\gamma = \max\{a_\lambda :\; \lambda \in \EN\}$. We refer to \cite[Remark 3.3]{SiconolfiSorrentino} for an explanation of the role of this condition.
\end{Remark}

\smallskip

We fix  $\ga \in \EN$,  $e\in \EE$ with $\ga = \Psi(e)$. The procedure to pass from  $H_\ga$  to $\HH(e, \cdot)$ consists in the following three steps:

\begin{itemize}
  \item[--] consider, for any $s$, the inverse, with respect to the composition, of $H_\ga(s,\cdot)$ in $[p^\ga_s,+\infty)$, denoted  by $\si^+_\ga(s,\cdot)$;
  \item[--] for any fixed $a \geq a_\gamma$, integrate $\si^+_\ga(\cdot,a)$ in $[0,1]$ obtaining $\si(e,a)$, where
\begin{eqnarray*}
  \si^+_\ga(s,a) & := &  \max \{ p \mid H_\ga(s,p)=a\} \\
  \si(e,a) &:=& \int_0^1 \si_\ga^+(s,a) \, ds;
\end{eqnarray*}
  \item[--]  define
  \begin{equation}\label{newborn0}
    \HH(e,p) := \left \{ \begin{array}{cc}
               \si^{-1}(e, p)& \quad\hbox{for $p \geq \si(e,a_\ga)$} \\
               \si^{-1}(-e, - p) & \quad\hbox{for $p \leq \si(e,a_\ga)$}
             \end{array} \right .,
  \end{equation}
  where the inverse is with respect the composition.
\end{itemize}

\smallskip

\smallskip

It is easy to see that if $H_\ga$ is independent of $s$,  then $H_\ga(\cdot)$ and $\HH(e,\cdot)$ coincide.\\

\begin{Proposition}\label{newborn}  If assumptions {\bf (H1$^\prime$)}--{\bf (H4$^\prime$)} hold, then  $\HH(e, \cdot): \R \to \R$ satisfies  {\bf (H1)}--{\bf (H2)}.
Moreover, $a_e=a_\gamma$ and $p_e=\sigma(e, a_\gamma)$, as defined in \eqref{ae}.

\end{Proposition}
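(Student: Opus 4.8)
The plan is to build $\HH(e,\cdot)$ through the three--step recipe \eqref{newborn0} and to track how strict convexity, $C^1$ regularity and superlinearity behave under the only two operations involved, namely fiberwise inversion and integration in $s$. Since \eqref{lag1} and superlinearity at $-\infty$ will follow by symmetry from the $-e$ side once the $+\infty$ side is understood, the bulk of the work concerns $\si(e,\cdot)$ and its inverse on $[\si(e,a_\ga),+\infty)$.

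First I would record the fiberwise picture. For fixed $s$, the strict convexity in {\bf (H3$^\prime$)} and the differentiability in {\bf (H1$^\prime$)} make $H_\ga(s,\cdot)$ strictly convex and $C^1$, with unique minimizer $p^\ga_s$ and minimum value $a_\ga$ by {\bf (H4$^\prime$)}; uniform superlinearity \eqref{newborn1} makes it strictly increasing and surjective from $[p^\ga_s,+\infty)$ onto $[a_\ga,+\infty)$. Hence $\si^+_\ga(s,\cdot)$ is well defined, continuous, strictly increasing, and, being the inverse of a strictly convex increasing function, strictly concave; on $(a_\ga,+\infty)$ it is differentiable with $\partial_a\si^+_\ga(s,a)=1/\partial_pH_\ga(s,\si^+_\ga(s,a))>0$, while $\si^+_\ga(s,a_\ga)=p^\ga_s$. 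Joint continuity of $(s,a)\mapsto\si^+_\ga(s,a)$ follows from continuity of $H_\ga$ together with continuity of $s\mapsto p^\ga_s$ (uniqueness of the minimizer plus \eqref{newborn1}), which in turn legitimizes differentiation under the integral sign on $(a_\ga,+\infty)$.

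Integrating in $s$, I would transfer these properties to $\si(e,\cdot)=\int_0^1\si^+_\ga(s,\cdot)\,ds$: it is continuous and strictly increasing on $[a_\ga,+\infty)$, strictly concave (integral of strictly concave functions), differentiable on $(a_\ga,+\infty)$, and $\si(e,a_\ga)=\int_0^1 p^\ga_s\,ds$. Uniform superlinearity gives $\si^+_\ga(s,a)/a\to 0$ uniformly in $s$, hence $\si(e,a)/a\to0$ as $a\to+\infty$. Consequently, on $[\si(e,a_\ga),+\infty)$ the inverse $\si^{-1}(e,\cdot)=\HH(e,\cdot)$ is strictly convex and increasing, and writing $a=\HH(e,p)$ one gets $\HH(e,p)/p=a/\si(e,a)\to+\infty$, which is superlinearity at $+\infty$; the left branch and superlinearity at $-\infty$ come verbatim from the $-e$ side, and \eqref{lag1} is immediate from \eqref{newborn0} (for $p\ge\si(e,a_\ga)$ one reads off $\HH(-e,-p)=\si^{-1}(e,p)=\HH(e,p)$, and symmetrically below). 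The two branches agree at the junction because the compatibility \eqref{ovgamma} yields $p^{-\ga}_s=-p^\ga_{1-s}$, whence $\si(-e,a_\ga)=-\int_0^1 p^\ga_{1-s}\,ds=-\si(e,a_\ga)$; thus both formulas in \eqref{newborn0} return the common value $a_\ga$ at $p=\si(e,a_\ga)$.

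The hard part will be the $C^1$--matching at the junction $p=\si(e,a_\ga)$, which is also what pins down $a_e$ and $p_e$. Because $\partial_pH_\ga(s,p^\ga_s)=0$, the fiber derivative $\partial_a\si^+_\ga(s,a)=1/\partial_pH_\ga(s,\si^+_\ga(s,a))$ diverges as $a\to a_\ga^+$, so by monotone convergence $\si'(e,a)\to+\infty$; inverting, $\HH'(e,p)=1/\si'(e,\HH(e,p))\to0$ as $p\to\si(e,a_\ga)^+$, and symmetrically $\HH'(e,p)\to0$ from the left via the $-e$ side. Since $\HH(e,\cdot)$ is continuous at the junction and its derivative admits the common one--sided limit $0$, it is differentiable there with vanishing derivative; combining this with the strict convexity and $C^1$ regularity of each branch gives a continuous, strictly increasing derivative on all of $\R$, hence a strictly convex $C^1$ function, which establishes {\bf (H1)}--{\bf (H2)}. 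Finally, $\HH(e,\cdot)$ attains its unique minimum at $p=\si(e,a_\ga)$ with value $\HH(e,\si(e,a_\ga))=a_\ga$, so by the characterization in \eqref{ae} we conclude $a_e=a_\ga$ and $p_e=\si(e,a_\ga)$.
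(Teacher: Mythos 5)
Your proof is correct, and it follows the same overall scaffolding as the paper's: the paper first isolates the properties of $a\mapsto\si(e,a)$ in a preliminary lemma (Lemma \ref{newbornlem}: continuity, strict monotonicity, strict concavity, sublinearity at $+\infty$, and divergence of the derivative as $a\to a_\ga^+$), and then inverts and glues the two branches exactly as you do, concluding differentiability of $\HH(e,\cdot)$ at the junction from the vanishing one-sided limits of its derivative, and reading off $a_e=a_\ga$, $p_e=\si(e,a_\ga)$ at the end. The one substantive difference is how the delicate point --- $\frac{\partial}{\partial a}\si(e,a)\to+\infty$ as $a\to a_\ga^+$, which is what forces $\frac{\partial}{\partial p}\HH(e,p)\to 0$ at the junction --- is established. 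The paper argues quantitatively: it introduces a uniform continuity modulus $\om$ of $(s,a)\mapsto\si^+_\ga(s,a)$ and of $(s,p)\mapsto\frac{\partial}{\partial p}H_\ga(s,p)$ on a suitable compact set, deduces $0\le \frac{\partial}{\partial p}H_\ga(s,p)\le\om(p-p_s)$ there, and obtains the explicit lower bound $\frac{\partial}{\partial a}\si(e,a)\ge 1/(\om\circ\om)(a-a_\ga)$. You argue qualitatively by monotone convergence: for each fixed $s$ the integrand $1/\frac{\partial}{\partial p}H_\ga\bigl(s,\si^+_\ga(s,a)\bigr)$ is monotone in $a$ (by convexity of $H_\ga(s,\cdot)$ and monotonicity of $\si^+_\ga(s,\cdot)$) and increases pointwise to $+\infty$ as $a\downarrow a_\ga$, so the integral diverges. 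Your route is more elementary and avoids moduli of continuity altogether; the paper's yields an explicit blow-up rate, which however is never used elsewhere. You also supply two details the paper leaves implicit: that the two branches in the definition of $\HH(e,\cdot)$ actually meet at the junction, via $p^{-\ga}_s=-p^\ga_{1-s}$ (a consequence of \eqref{ovgamma}) and hence $\si(-e,a_\ga)=-\si(e,a_\ga)$, and that the compatibility \eqref{lag1} holds. The remaining discrepancies are cosmetic: you get strict concavity of $\si^+_\ga(s,\cdot)$ from the abstract fact about inverses of strictly convex increasing functions where the paper computes directly, and you prove sublinearity of $\si(e,\cdot)$ by a direct uniform estimate where the paper argues by contradiction; both are equivalent in substance.
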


We need a preliminary result.

\begin{Lemma} \label{newbornlem}  The function $a \longmapsto \si(e,a)$  from
$[a_\ga,+\infty)$  to $\R$ is:
\begin{itemize}
  \item[{\bf (i)}] continuous and strictly increasing;
  \item[{\bf (ii)}] strictly concave with $\lim_{a \to + \infty} \frac{\si(e,a)}a =0$;
  \item[{\bf (iii)}] differentiable in $(a_\ga, + \infty)$ with $\lim_{a \to a_\ga} \frac \partial{\partial a} \si(e,a)= + \infty$.
\end{itemize}
\end{Lemma}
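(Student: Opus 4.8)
The plan is to establish the three properties first \emph{pointwise} for the partial inverse $\si^+_\ga(s,\cdot)$, with $s\in[0,1]$ fixed, and then to transfer them to $\si(e,\cdot)=\int_0^1\si^+_\ga(s,\cdot)\,ds$ by interchanging the relevant limit or derivative with the integration in $s$.

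First I would fix $s$ and use the assumptions on $H_\ga$. By {\bf (H3$^\prime$)} the restriction of $H_\ga(s,\cdot)$ to $[p^\ga_s,+\infty)$ is strictly convex, and by {\bf (H1$^\prime$)} it is continuous and differentiable; since $p^\ga_s$ is its unique minimizer with value $a_\ga$, it is strictly increasing there, with $\frac{\partial}{\partial p}H_\ga(s,p)>0$ for $p>p^\ga_s$ and $\frac{\partial}{\partial p}H_\ga(s,p^\ga_s)=0$. Consequently its inverse $\si^+_\ga(s,\cdot)\colon[a_\ga,+\infty)\to[p^\ga_s,+\infty)$ is continuous and strictly increasing; moreover it is strictly concave, since its graph is the reflection across the diagonal of the strictly convex graph of $H_\ga(s,\cdot)|_{[p^\ga_s,+\infty)}$ (so no second derivative is needed). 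By the inverse function theorem it is differentiable on $(a_\ga,+\infty)$ with
\[
\frac{\partial}{\partial a}\si^+_\ga(s,a)=\left(\frac{\partial}{\partial p}H_\ga\big(s,\si^+_\ga(s,a)\big)\right)^{-1},
\]
the right-hand side being jointly continuous in $(s,a)$ by {\bf (H1$^\prime$)}. Finally, the uniform superlinearity \eqref{newborn1} forces $\si^+_\ga(s,a)/a\to0$ as $a\to+\infty$ \emph{uniformly} in $s$: if $H_\ga(s,p)\geq r\,p$ for all $p>R(r)$ and all $s$, then evaluating at $p=\si^+_\ga(s,a)$ (where $H_\ga=a$) gives $\si^+_\ga(s,a)\leq a/r$ as soon as $\si^+_\ga(s,a)>R(r)$, which holds for all large $a$.

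Next I would pass these properties through the integral over the positive-measure set $[0,1]$. Strict monotonicity and strict concavity of $\si(e,\cdot)$ are immediate, since strict pointwise inequalities integrate to strict inequalities; continuity of $\si(e,\cdot)$ follows from the joint continuity of $\si^+_\ga$ together with dominated convergence on $[0,1]$, which settles {\bf (i)} and the concavity in {\bf (ii)}. The uniform bound $\si^+_\ga(s,a)\leq a/r$ lets me pass the limit inside the integral to obtain $\si(e,a)/a\to0$, completing {\bf (ii)}. For {\bf (iii)}, on each compact subinterval of $(a_\ga,+\infty)$ the integrand $\big(\frac{\partial}{\partial p}H_\ga(s,\si^+_\ga(s,a))\big)^{-1}$ is continuous and bounded, so differentiation under the integral sign is legitimate and yields
\[
\frac{\partial}{\partial a}\si(e,a)=\int_0^1\left(\frac{\partial}{\partial p}H_\ga\big(s,\si^+_\ga(s,a)\big)\right)^{-1}ds.
\]

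The step I expect to be the main obstacle is the boundary behaviour $\frac{\partial}{\partial a}\si(e,a)\to+\infty$ as $a\to a_\ga^+$, because there the integrand blows up: as $a\downarrow a_\ga$ one has $\si^+_\ga(s,a)\downarrow p^\ga_s$ and $\frac{\partial}{\partial p}H_\ga(s,p^\ga_s)=0$, so the integrand tends to $+\infty$ pointwise in $s$ and dominated convergence is unavailable. Since only a lower bound is needed, I would instead invoke Fatou's lemma:
\[
\liminf_{a\to a_\ga^+}\frac{\partial}{\partial a}\si(e,a)\geq\int_0^1\liminf_{a\to a_\ga^+}\left(\frac{\partial}{\partial p}H_\ga\big(s,\si^+_\ga(s,a)\big)\right)^{-1}ds=+\infty,
\]
which completes {\bf (iii)}.
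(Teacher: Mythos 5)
Your proposal is correct, and its skeleton (establish everything pointwise for $\si^+_\ga(s,\cdot)$, then integrate over $s\in[0,1]$) coincides with the paper's. The differences lie in how the two delicate points are handled. For item \textbf{(i)}, the paper simply cites \cite[Lemma 5.15]{SiconolfiSorrentino}, whereas you prove continuity and strict monotonicity from scratch; your superlinearity argument in \textbf{(ii)} is a direct uniform bound $\si^+_\ga(s,a)\leq \max\{R(r),a/r\}$ rather than the paper's argument by contradiction (extracting sequences $s_n,p_n$ violating \eqref{newborn1}), but these are two phrasings of the same estimate. The genuine divergence is in the blow-up of $\frac{\partial}{\partial a}\si(e,a)$ as $a\to a_\ga^+$, which is exactly the step you flagged as the obstacle. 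The paper's route is quantitative: it introduces a uniform continuity modulus $\om$ of $(s,a)\mapsto\si^+_\ga(s,a)$ on $[0,1]\times[a_\ga,a_\ga+1]$ and of $(s,p)\mapsto\frac{\partial}{\partial p}H_\ga(s,p)$ on the compact set $K=\{(s,p)\mid p\in[p_s,+\infty),\,H_\ga(s,p)\leq a_\ga+1\}$, uses $\frac{\partial}{\partial p}H_\ga(s,p_s)=0$ to get $0\leq\frac{\partial}{\partial p}H_\ga(s,p)\leq\om(p-p_s)$ on $K$, and concludes with the explicit lower bound $\frac{\partial}{\partial a}\si(e,a)\geq\frac{1}{\om\circ\om(a-a_\ga)}$. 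Your Fatou argument is softer: since the integrand is non-negative and tends to $+\infty$ pointwise in $s$ (because $\si^+_\ga(s,a)\downarrow p^\ga_s$ and the derivative of $H_\ga(s,\cdot)$ vanishes at the minimizer), Fatou's lemma along arbitrary sequences $a_n\downarrow a_\ga$ forces the liminf to be $+\infty$. This is shorter, avoids constructing moduli of continuity and verifying compactness of $K$, and is fully valid for the stated limit; what it gives up is the rate of blow-up, which the paper's estimate provides explicitly (though the lemma as stated does not require it). One small point common to both arguments: the joint continuity of $(s,a)\mapsto\si^+_\ga(s,a)$, which you assert and the paper absorbs into the cited lemma, does deserve at least a word (it follows from joint continuity of $H_\ga$, strict monotonicity in $p$, and continuity of $s\mapsto p^\ga_s$), since it underlies both your differentiation under the integral sign and the paper's uniform modulus.
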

\begin{proof}  The claimed continuity and monotonicity properties in item {\bf (i)} have been already proved in  \cite[Lemma 5.15]{SiconolfiSorrentino}.   Exploiting the  strict convexity assumption
on $H_\ga$, we deduce that, for any $s \in [0,1]$, $\la \in (0,1)$, $a$, $b$ in $[a_\ga,+\infty)$
\begin{eqnarray}
  H_\ga \left (s, \si^+_\ga( s, (1-\la)a +\la b)  \right )  \nonumber&=& (1-\la) \, a +\la \,  b \\ &=& (1-\la) \, H_\ga(s,\si^+_\ga( s,a))
+\la \,H_\ga(s, \si^+_\ga( s,b))  \label{newborn0}\\
   &>& H_\ga(s, (1-\la) \, \si^+_\ga(s,a)+\la\,
\si^+_\ga(s,b)).   \nonumber
\end{eqnarray}
Since $H_\ga(s,\cdot)$ is increasing in the interval $(p_s, +
\infty)$, the  inequality in \eqref{newborn0}  yields
\[\si^+_\ga( s,(1-\la)a +\la b) > (1-\la) \, \si^+_\ga(s,a)+\la\,
\si^+_\ga(s,b).\] By integrating the above relation over $[0,1]$, we
finally get
\[\si(e,(1-\la)a +\la b) >  (1-\la) \,\si(e,a)+\la\,
\si(e,b),\] which shows the strictly concave character of $\si(e,\cdot)$.\\
 To prove  the limit relation in {\bf (ii)}, we exploit
the uniform superlinearity assumption {\bf (H2$^\prime$)} on $H_\ga$.
Assume by contradiction that there is a sequence $a_n \to \infty$
and a positive $M$  such that
\[\lim_{n\rightarrow +\infty} \frac{\si (e,a_n)}{a_n} > M.\]
It follows from the definition of $\si(e, a_n)$ that there  exist, for any $n$,  $s_n \in [0,1]$, $p_n \in \R$ such that
\[  H_\ga(s_n,p_n)=a_n \quad\hbox{and} \quad \frac{ p_n }{a_n}> M.\]
Hence, we derive
\[ p_n \to + \infty   \quad\hbox{and}  \quad \frac{H_\ga(s_n,p_n)}{p_n} < \frac 1M,\]
which is in contrast with \eqref{newborn1}. We deduce from  {\bf(H1$^\prime$)} that the inverse function $a \mapsto \si^+_\ga(s,a)$ is differentiable in $(a_\ga,+\infty)$. By  differentiating  under the integral sign, we further get  that $a \mapsto \si(e,a)$ is differentiable in $(a_\ga,+\infty)$ and
\[ \frac \partial{\partial a} \si(e,a)= \int_0^1 \frac \partial{\partial a} \si^+_\ga(s,a) \, ds.\]
We denote by $\om(\cdot)$ a uniform continuity modulus of  $ (s,a) \mapsto \si^+_\ga(s,a)$ in $[0,1] \times [a_\ga, a_\ga +1]$ and of $(s,p) \mapsto \frac \partial{\partial p} H_\ga(s,p)$ in $K$ (see assumption {\bf (H1$^\prime$)}), where
\[ K= \{ (s,p) \mid s \in [0,1], \, p \in [p_s,+\infty),\, H_\ga(s,p) \leq a_\ga+1\}\]
is compact by the superlinearity assumption {\bf (H2$^\prime$)}. Then
\begin{equation}\label{odette}
 0 \leq \frac \partial {\partial p} H_\ga(s,p) \leq \om(p-p_s) \txt {for $(s,p) \in K$.}
\end{equation}
Observe that
\[
a=H_\gamma(s,\sigma^+_a(s,a)) \qquad \Longrightarrow \qquad
1=\frac{\partial}{\partial p}H_\gamma(s,\sigma^+_a(s,a)) \, \frac{\partial}{\partial a}\sigma^+_a(s,a).\]
This  and \eqref{odette} imply that for $a \in (a_e,a_e+1)$ we have
\begin{eqnarray*}
  \frac \partial{\partial a} \si(e,a) &=& \int_0^1 \frac 1{\frac \partial{\partial p} H_\ga(s,\si^+_\ga(s,a))} \, ds  \\
  &\geq& \int_0^1 \frac 1{\om( \si^+_\ga(s,a)-p_s)} \,ds \geq \frac 1{\om \circ \om(a - a_\ga)}.
\end{eqnarray*}
From this we derive item {\bf (iii)}, and conclude the proof.
\end{proof}

\smallskip

 \begin{proof}[{\bf Proof of Proposition \ref{newborn}:}] \; We derive from \eqref{newborn0} and Lemma \ref{newbornlem} that $\HH(e,\cdot)$ is continuous in $\R$ and differentiable in $\R \setminus  \{\si(e,a_\ga)\}$. Taking into account that
 \begin{eqnarray*}
    \frac \partial{\partial p} \HH(e,p) &=& \frac 1{\frac \partial{\partial a} \si(e,\si^{-1}(e,p))} \txt{for $p >  \si(e,a_\ga)$}\\
   \frac \partial{\partial p} \HH(e,p) &=& - \frac 1{\frac \partial{\partial a} \si(-e,\si^{-1}(-e,p))} \txt{for $p <  \si(e,a_\ga)$}
 \end{eqnarray*}
 we derive from Lemma \ref{newbornlem} {\bf (iii)} that
 \[\lim_{p \to \si(e,a_\ga)}  \frac \partial{\partial p} \HH(e,p)=0,\]
 which implies that $\HH(e, \cdot)$ is differentiable in $\si(e,a_\ga)$ with vanishing derivative.  Strict convexity is straight forward from the previous discussion.
Let us  prove {\bf (H2)}, namely that
 $\lim_{p \to \pm \infty} \frac{\HH(e,p)}{|p|}= + \infty. $\\
Recalling \eqref{newborn0} and using {\bf (ii)} in Lemma \ref{newbornlem}:
 \begin{eqnarray}
\lim_{p \to + \infty} \frac{\HH(e,p)}{p}
&=& \lim_{p \to + \infty} \frac{\sigma^{-1}(e,p)}{p} = \lim_{a \to + \infty} \frac{a}{\sigma(e,a)} = +\infty.
 \end{eqnarray}
Similarly for $p \rightarrow -\infty$, considering $\sigma(-e,a)$.\\
Easily follows that $a_e=a_\gamma$ and $p_e=\sigma(e, a_\gamma)$ (see \eqref{ae}).
 \end{proof}

For every $\gamma\in \EN$,    consider the Lagrangian associated to $H_\gamma$, namely its {\it convex conjugate}  $L_\gamma: [0,1]\times \R \longrightarrow \R$  defined as
\begin{equation}\label{Lgamma}
L_\gamma(s, q) := \sup_{p\in \R} \big( p \,q - H_\gamma(s, p) \big),
\end{equation}
where equality is achieved for $p$ such that $\frac{\partial H_\gamma}{\partial p}(s, p)=q$.

Since  $H_\gamma$ satisfies {\bf (H1$^\prime$)--(H3$^\prime$)}, then it follows (see for example \cite{Rockafellar}) that    $L_\gamma$ is
{\it continuous} in $(s,q)$, {\it differentiable}, {\it superlinear} and {\it strictly convex} in $q$.\\

Using \eqref{ovgamma} we see that the $L_\gamma$'s satisfy the following compatibility condition:
 \[L_{- \gamma}(s,q) = L_\gamma(1-s, - q) \qquad \forall\; s\in [0,1],\; q\in \R. \]

\medskip

\subsection{How to develop Aubry-Mather theory on  networks}\label{dragone}  In this section we look, from the point of view of networks, at some of the notions that we have introduced in the previous sections.
This will help clarify and validate the setting that we have proposed, and it will  outline the ideas and the tools that are needed in order to transfer the previous construction  to the network setting.\\

 In this section we assume the Hamiltonian $\{H_\ga\}_{\ga \in \EN}$ to be {\it Tonelli}, namely,  besides {\bf (H1$^\prime$)}--{\bf (H4$^\prime$)}, we further require that for any $\ga \in \EN$
 \begin{itemize}
   \item[{\bf (H5$^\prime$)}] $L_\ga(s,q)$ is of class $C^2$ in $(s,q)$  and $\frac{\partial^2}{\partial q^2}L_\gamma$ is positive definite.
 \end{itemize}

 \smallskip

We consider the network $\GG$ and its corresponding abstract graph $\G$. We fix an arc $\ga$ and an edge $e$ with $\Psi(e)=\ga$. \\

Given a parametrization $(q_e,T_e)$ of the edge $e \in \EE$, we provide an interpretation of it on the corresponding arc $\ga$. We first assume $q >0$, so that, according to the definition of parametrized path, $T_e= \frac {1}{q_e}$. Then, due to the strict convexity of $\LL(e,\cdot)$,  there exists a unique $p_{q_e} \geq p_e$ such that
\begin{equation}\label{draghi1}
 \LL(e,q)= p_{q_e} \, q_e -\HH(e,p_{q_e}) = q\, \si(e,a_{q_e})-a_{q_e},
\end{equation}
where $a_{q_e}>a_e$ is such that $p_{q_e}=\si(e,a_{q_e})$ (it is uniquely defined because of the continuity and  strict monotonicity of $\sigma(e, \cdot)$ stated in Lemma \ref{newbornlem}).
This  also implies the relation
\[q_e= \frac \partial{\partial p} \HH(e,p_{q_e}) =  \frac \partial{\partial p} \HH(e, \si(e,a_{q_e})).\]
We consider  the solution to $H_\ga (s, w'(s))=a_{q_e}$  in $(0,1)$ given by
\[w(s)= \int_0^s \si^+_\ga(t,a_{q_e}) \, dt,\]  and the  orbit of the Hamiltonian flow related to $H_\ga$ in $[0,1] \times \R$ with initial datum $(0,\si^+_\ga(0,a_{q_e})) = (0,w'(0))$,  contained in the energy level $a_{q_e}$. This orbit has as first component the curve $\xi_{q_e}$ with $\xi_{q_e}(0)=0$ and
 \[\dot \xi_{q_e}= \frac \partial{\partial p} H_\ga(\xi_{q_e}(t), w'(\xi_{q_e}(t))),\]
while the second component is given by $w'(\xi_{q_e}(t))$. We have in fact
\begin{eqnarray*}
0 &=& \frac d{dt} H_\ga(\xi_{q_e}(t),w'(\xi_{q_e}(t))) \\
   &=&  \frac {\partial}{\partial s} H_\ga(\xi_{q_e}(t), w'(\xi_{q_e}(t))) \, \dot\xi_{q_e}(t)+ \dot \xi_{q_e}(t) \, \frac d{dt} w'(\xi_{q_e}(t)),
\end{eqnarray*}
and accordingly
\[\frac d{dt} w'(\xi_{q_e}(t))= - \frac {\partial}{\partial s} H_\ga(\xi_{q_e}(t), w'(\xi_{q_e}(t))).\]
The orbit is defined in $[0,T_{q_e}]$, where $T_{q_e}$ is the time in which $\xi_{q_e}$ reaches the boundary point $s=1$.
\begin{Proposition}\label{draghi}
Let $q_e>0$ and let $\xi_{q_e}$ and $T_{q_e}$ be defined as above. Then:
\begin{enumerate}
  \item[{\bf (i)}] The time $T_{q_e}$ is equal to $\frac {1}{q_e}$;
  \item[{\bf(ii)}] $q_e$ is the average speed of $\xi_{q_e}$ in the time interval $[0,T_{q_e}]$;
  \item[{\bf (iii)}] $ \LL(e,q_e) =\frac 1{T_{q_e}} \int_0^{T_{q_e}} L_\ga(\xi_{q_e},\dot\xi_{q_e}) \, dt $;
    \item[{\bf (iv)}]  $\LL(e,q_e) = \frac 1{T_{q_e}} \min\left\{
 \int_0^{T_{q_e}} L_\ga(\zeta(t), \dot{\zeta}(t))\,dt  \right \},$
where the minimum is taken in the family of absolutely continuous curves $\zeta:[0, T_{q_e}]\longrightarrow [0,1]$  with $\zeta(0)=0$, $\zeta(T_{q_e})=1$.
\end{enumerate}
 \end{Proposition}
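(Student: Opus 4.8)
The plan is to handle (i)--(ii) by a direct computation of the travel time, and to treat (iii)--(iv) together through a calibration (Fenchel) argument in which the function $w$ plays the role of a critical subsolution.

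First I would record that along the constructed orbit the momentum is pinned to the energy level: since $w'(s)=\si^+_\ga(s,a_{q_e})$ and $H_\ga(s,\si^+_\ga(s,a_{q_e}))=a_{q_e}$ by the very definition of $\si^+_\ga$, the curve $\xi_{q_e}$ has velocity $\dot\xi_{q_e}(t)=\frac\partial{\partial p}H_\ga\big(\xi_{q_e}(t),\si^+_\ga(\xi_{q_e}(t),a_{q_e})\big)$. Because $q_e>0$ forces $a_{q_e}>a_\ga$, the momentum $\si^+_\ga(s,a_{q_e})$ lies strictly to the right of the minimizer $p^\ga_s$, so this velocity is strictly positive and bounded below on the compact energy level; hence $\xi_{q_e}$ is strictly increasing and reaches $s=1$ in finite time $T_{q_e}$. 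Separating variables and substituting $s=\xi_{q_e}(t)$ gives
\[T_{q_e}=\int_0^1\frac{ds}{\frac\partial{\partial p}H_\ga(s,\si^+_\ga(s,a_{q_e}))}.\]
Differentiating the identity $a=H_\ga(s,\si^+_\ga(s,a))$ in $a$ yields $\frac\partial{\partial p}H_\ga(s,\si^+_\ga(s,a))\,\frac\partial{\partial a}\si^+_\ga(s,a)=1$ (exactly the computation already used in Lemma \ref{newbornlem}), so the integrand equals $\frac\partial{\partial a}\si^+_\ga(s,a_{q_e})$ and, integrating under the sign, $T_{q_e}=\frac\partial{\partial a}\si(e,a_{q_e})$. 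On the other hand the derivative formula for $\HH$ from the proof of Proposition \ref{newborn} gives $q_e=\frac\partial{\partial p}\HH(e,\si(e,a_{q_e}))=\big(\frac\partial{\partial a}\si(e,a_{q_e})\big)^{-1}$. Combining the two identities proves (i), that $T_{q_e}=1/q_e$. Claim (ii) is then immediate: being monotone, $\xi_{q_e}$ has average speed $\frac1{T_{q_e}}\big(\xi_{q_e}(T_{q_e})-\xi_{q_e}(0)\big)=\frac1{T_{q_e}}=q_e$.

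For (iii)--(iv) I would use $w$ as a calibrating subsolution. For \emph{any} admissible curve $\zeta$, Young's (Fenchel) inequality for the conjugate pair $(H_\ga,L_\ga)$ together with $H_\ga(\zeta,w'(\zeta))=a_{q_e}$ gives pointwise $L_\ga(\zeta,\dot\zeta)\ge w'(\zeta)\dot\zeta-a_{q_e}$; integrating over $[0,T_{q_e}]$ and telescoping the exact term,
\[\int_0^{T_{q_e}}L_\ga(\zeta,\dot\zeta)\,dt\ \ge\ w(1)-w(0)-a_{q_e}T_{q_e}=\si(e,a_{q_e})-a_{q_e}T_{q_e}.\]
Since $L_\ga(s,\cdot)$ is strictly convex, equality in the Fenchel step holds pointwise precisely when $\dot\zeta(t)=\frac\partial{\partial p}H_\ga(\zeta(t),w'(\zeta(t)))$, which is exactly the ODE defining $\xi_{q_e}$. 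Thus $\xi_{q_e}$ is an admissible competitor attaining the lower bound, so the minimum in (iv) is realized and equals the value computed along $\xi_{q_e}$, giving (iii) simultaneously. Dividing by $T_{q_e}=1/q_e$ and recalling \eqref{draghi1},
\[\frac1{T_{q_e}}\Big(\si(e,a_{q_e})-a_{q_e}T_{q_e}\Big)=q_e\,\si(e,a_{q_e})-a_{q_e}=\LL(e,q_e),\]
which closes both claims.

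The main obstacle I anticipate is (i): it hinges on recognizing that the travel-time integral is exactly $\frac\partial{\partial a}\si(e,a_{q_e})$, which requires the inverse-function differentiation of $\si^+_\ga$ together with the derivative formula for $\HH$ established in Proposition \ref{newborn}. Once this identification is in place, (ii) comes for free and the calibration argument for (iii)--(iv) is routine, the only delicate point being the justification of the equality case in the Fenchel inequality, which is guaranteed by the strict convexity provided by the Tonelli hypothesis.
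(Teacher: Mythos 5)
Your proposal is correct, but it reaches items {\bf (i)}--{\bf (ii)} by a genuinely different route than the paper. The paper never computes the travel time directly: it establishes the conjugacy identity $L_\ga(\xi_{q_e},\dot\xi_{q_e}) = \dot\xi_{q_e}\,\si^+_\ga(\xi_{q_e},a_{q_e}) - a_{q_e}$ along the orbit together with the Fenchel inequality $L_\ga(\xi_{q_e},\dot\xi_{q_e}) \geq \dot\xi_{q_e}\,\si^+_\ga(\xi_{q_e},b) - b$ for every level $b \geq a_e$, integrates both over $[0,T_{q_e}]$, and reads off from \eqref{traviata} that the maximum of $b \mapsto \frac{1}{T_{q_e}}\,\si(e,b) - b$ is attained at $b = a_{q_e}$, i.e. $\LL(e,1/T_{q_e}) = \frac{1}{T_{q_e}}\,\si(e,a_{q_e}) - a_{q_e}$; comparing with \eqref{draghi1} and invoking the strict convexity of $\LL(e,\cdot)$ then forces $1/T_{q_e} = q_e$. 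You instead compute $T_{q_e}$ by separation of variables, identify the integrand with $\frac{\partial}{\partial a}\si^+_\ga(s,a_{q_e})$ via implicit differentiation, and conclude $T_{q_e} = \frac{\partial}{\partial a}\si(e,a_{q_e}) = 1/q_e$ from the derivative formula for $\HH$ appearing in the proof of Proposition \ref{newborn}. Your argument buys a dynamically transparent statement (the transit time equals the derivative of the intrinsic length with respect to the energy, a classical fact about Hamiltonian flows) and makes {\bf (i)}--{\bf (ii)} logically independent of the variational part; the cost is that it leans on the differentiability machinery of Lemma \ref{newbornlem} {\bf (iii)} (differentiability of $a \mapsto \si^+_\ga(s,a)$ and differentiation under the integral sign), whereas the paper's softer convex-duality argument extracts {\bf (i)} as a by-product of the very same calibration computation that yields {\bf (iii)}--{\bf (iv)}, so the whole proposition flows from a single Fenchel estimate. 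For {\bf (iii)}--{\bf (iv)} the two proofs coincide in substance. One small imprecision: the equality case in your Fenchel step is characterized by convexity and differentiability of $H_\ga(s,\cdot)$ (the first-order condition, as stated right after \eqref{Lgamma}), not by strict convexity of $L_\ga(s,\cdot)$; this does not affect the validity of the argument.
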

 \begin{proof} We have that $\dot\xi_{q_e}(t)$ and $w'(\xi_{q_e}(t))$ are conjugate in $[0,T_{q_e}]$, in the sense that
 \[\dot\xi_{q_e}(t)\, w'(\xi_{q_e}(t))= L_\ga(\xi_{q_e}(t),\dot\xi_{q_e}(t))+ H_\ga(\xi_{q_e}(t),w'(\xi_{q_e}(t))).\]
 which implies
 \begin{equation}\label{draghi2}
   L_\ga(\xi_{q_e}(t),\dot\xi_{q_e}(t))=  \dot\xi_{q_e}(t)\, \si^+_\ga(\xi_{q_e}(t),a_{q_e}) - a_{q_e}.
 \end{equation}
In addition, it follows from the definition of $L_\ga$ that
 \begin{equation}\label{draghi3}
L_\ga(\xi_{q_e}(t),\dot\xi_{q_e}(t)) \geq  \dot\xi_{q_e}(t))\, \si^+_\ga(\xi_{q_e}(t),b) - b   \quad\hbox{for any $b \geq a_e$}.
 \end{equation}
 By integrating  \eqref{draghi2}, \eqref{draghi3} over $[0,T_{q_e}]$ we further get
 \begin{eqnarray}
   \int_0^{T_{q_e}} L_\ga(\xi_{q_e}(t), \dot\xi_{q_e}(t))&=& \si(e,a_{q_e}) - T_{q_e} \, a_{q_e}  \label{draghi4}\\
  \int_0^{T_{q_e}} L_\ga(\xi_{q_e}(t), \dot\xi_{q_e}(t))&\geq& \si(e,b) - T_{q_e} \, b \quad\hbox{for any $b \geq a_e$}. \nonumber
 \end{eqnarray}
Taking into account \eqref{traviata}, we derive
\begin{equation}\label{draghi5}
 \LL(e,1/T_{q_e}) = \frac 1{T_{q_e}} \, \si(e,a_{q_e}) - a_{q_e}.
\end{equation}
 This implies by  \eqref{draghi1} and the strict convexity of $\LL(e,\cdot)$
 \[T_{q_e}= \frac 1{q_e} \qquad\hbox{and} \qquad q_e = \frac 1{T_{q_e}} \, \int_0^{T_{q_e}} \dot\xi_{q_e}(t) \, dt, \]
showing items {\bf (i)} and  {\bf (ii)}. By combining \eqref{draghi4} and \eqref{draghi5}, we get  {\bf (iii)}.\\
Finally, to obtain item {\bf (iv)}, it is enough to observe that for any absolutely continuous curve $\zeta$ in $[0,1]$ with $\zeta(0)=0$ and $\zeta(T_{q_e})=1$, one has
\[\int_0^{T_{q_e}} L_\ga(\zeta(t), \dot\zeta(t))\geq \si(e,b) - T_{q_e} \, b.\]
 \end{proof}

 \smallskip

 \begin{Remark} The equality in item {\bf (iii)} of Proposition \ref{draghi} can be interpreted by saying that the action functional on the graph computed in $\de(e,q)$ equals the action functional on the network computed in the occupation measure corresponding to the speed curve $(\xi_{q_e}(t),\dot\xi_{q_e}(t))$ in $[0,T_{q_e}]$. The latter measure is obtained by pushing forward through $(\xi_{q_e}(t),\dot\xi_{q_e}(t))$ the $1$--dimensional Lebesgue measure restricted to $[0,T_{q_e}]$ and normalize it.

{In particular, item {\bf (iv)} of Proposition \ref{draghi} reads that the curve $\xi_{q_e}$ defined on $[0,T_{q_e}]$ is  action minimizing for $L_\gamma$.  This sheds light on the reason why Mather measures on the graph consist of convex combinations of Dirac deltas (see Proposition \ref{postmin}), in analogy with what happens in the classical theory, in which Mather measures are supported on action-minimizing curves (see \cite{Mather91, Sorrentinobook}).}
 \end{Remark}

\begin{Remark} In the  case where $e \in \MM_0$ and $q_e= \mathcal Q_0(e) >0$ -- we have chosen the cohomology class $0$ just for simplicity --,  the above construction acquires a global significance, in the sense that $\si^+_\ga(s,\al(0))$  is not just the derivative of a local (in $(0,1)$) solution of $H_\ga=\al(0)$, but we also have that
\[\si^+_\ga(s,\al(0)) = \frac d{ds} u\circ \ga(s)\]
for any critical subsolution $u$  of the Hamilton--Jacobi equation on the network, see  \cite{SiconolfiSorrentino}. \\
\end{Remark}

\medskip

\begin{Remark}
To discuss the case when the speed $q_e$ vanishes  for some $e \in \EE$,  the equilibrium circuit $(e,-e)$ with the parametrization $((e,0,T_1),(-e,0,T_2))$, with $T_1$, $T_2$ positive constants. We set $\ga=\Psi(e)$ and consequently $- \ga=\Psi(-e)$.  We have
\[\LL(e,0)= \LL(-e,0) = - \HH(e,p_e)= \HH(-e,p_{-e})= -a_e=a_{-e}.\]
In addition we have by assumption {\bf (H4$^\prime$)}
\[L_\ga(s,0)= L_{-\ga}(s,0)= a_e= a_{-e} \qquad\hbox{for every $s \in [0,1]$}\]
so that
\begin{eqnarray*}
  0&=&\frac \partial{\partial s}L_\ga(s,0)=- \frac \partial{\partial s}H_\ga(s,\si^+_\ga(s,a_e)) \\
  0&=&\frac \partial{\partial s}L_{-\ga}(s,0)=- \frac \partial{\partial s}H_{-\ga}(s,\si^+_{-\ga}(s,a_{-e})).
\end{eqnarray*}
This implies that all the points  $(s,\si^+_{a_e}(\ga,s))$, $(s,\si^+_{a_{-e}}(-\ga,s))$ are equilibria of the Hamiltonian flows related to $H_\ga$, $H_{-\ga}$, respectively.  We can put in relation the measures $\de(e,0)=\de(-e,0)$ with the Dirac measures concentrated at all points of the arcs $\ga$,$-\ga$, which -- in analogy with what we did in the graph -- can be identified.
\end{Remark}

\bigskip

\section{Proof of Theorem \ref{ari}}\label{B}

We need a preliminary result, see \cite[Proposition 42]{Bernard}.

\begin{Lemma}\label{younga} Let $\mathbb K$ be a closed convex subset of $\mathbb P$, we set
\[C^+ =\left\{ F: T\G \to \R \;\hbox{continuous with linear growth} \mid \int F \, d\mu \geq 0 \; \forall \, \mu \in \mathbb K \right \}.\]
Then:
\[\mathbb  K = \left\{ \nu \in \mathbb P \mid \int F \, d\nu \geq 0 \; \forall \,F \in C^+ \right \}.\]
\end{Lemma}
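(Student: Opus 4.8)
The inclusion $\mathbb K\subseteq\{\nu\in\mathbb P:\int F\,d\nu\ge 0\ \text{for all }F\in C^+\}$ is immediate from the definition of $C^+$, so the entire content lies in the reverse inclusion, which I would establish by contradiction through a Hahn--Banach separation argument. Denote by $\widehat{\mathbb K}$ the right--hand side; I must show $\widehat{\mathbb K}\subseteq\mathbb K$. Thus I would assume, for contradiction, that there is some $\nu\in\widehat{\mathbb K}\setminus\mathbb K$, and then manufacture a function $F\in C^+$ with $\int F\,d\nu<0$, contradicting $\nu\in\widehat{\mathbb K}$.

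The natural arena is the real vector space $\mathcal M$ of finite signed Borel measures on $T\G$ with finite first moment, put in duality with the space $\mathcal C_\ell$ of continuous functions $F$ of linear growth (those with $|F(e,q)|\le a_e q+b_e$ for suitable $a_e,b_e$) via the pairing $(F,\mu)\mapsto\int F\,d\mu$. This pairing separates points, so with the weak topology $\sigma(\mathcal M,\mathcal C_\ell)$ the space $\mathcal M$ is locally convex with topological dual exactly $\mathcal C_\ell$. The crucial compatibility, which I would invoke here, is the characterization of Wasserstein convergence recalled in Section~\ref{misura}: convergence in $\mathbb P$ is tested precisely against $\mathcal C_\ell$. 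Consequently the $W_1$--topology on $\mathbb P$ and the trace of $\sigma(\mathcal M,\mathcal C_\ell)$ agree there, and a convex $W_1$--closed set such as $\mathbb K$ is closed for $\sigma(\mathcal M,\mathcal C_\ell)$ as well (Mazur's theorem).

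With $\mathbb K$ convex and $\sigma(\mathcal M,\mathcal C_\ell)$--closed and $\{\nu\}$ a disjoint compact convex set, the Hahn--Banach separation theorem yields a $\sigma(\mathcal M,\mathcal C_\ell)$--continuous linear functional, hence by the dual identification a $G\in\mathcal C_\ell$ and a constant $c\in\R$ with
\[ \int G\,d\nu < c \le \int G\,d\mu \qquad\text{for every }\mu\in\mathbb K. \]
The decisive and final step uses that every element of $\mathbb P$ is a \emph{probability} measure: setting $F:=G-c$, which is still continuous of linear growth, gives $\int F\,d\mu=\int G\,d\mu-c\ge 0$ for all $\mu\in\mathbb K$, so $F\in C^+$, whereas $\int F\,d\nu=\int G\,d\nu-c<0$, the desired contradiction.

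I expect the genuine difficulty to be the functional-analytic bookkeeping rather than any computation: one must ensure the separating functional is represented by a function of \emph{linear growth} --- which is exactly what the choice of dual pair $(\mathcal M,\mathcal C_\ell)$ guarantees --- and reconcile the hypothesis that $\mathbb K$ is closed only for $W_1$ with the topology in which the separation is actually carried out, as in \cite[Proposition~42]{Bernard}. Once these compatibilities are in place, the normalization by the constant $c$ closes the argument at once.
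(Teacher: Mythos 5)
Your proposal has the same skeleton as the paper's proof---both are Hahn--Banach separation arguments---and your endgame is in fact cleaner: separating the compact singleton $\{\nu\}$ from a closed convex set yields a gap $\int G\,d\nu < c \le \int G\,d\mu$, and subtracting the constant $c$ (harmless because all measures involved are probabilities) immediately produces $F\in C^+$ with $\int F\,d\nu<0$. The paper cannot argue this way because it takes the separation ready-made from \cite[Theorem 2.9]{LOST} (Lemma \ref{preyounga}), which only provides the gap-free inequality $\int F\,d\mu>\int F\,d\nu$; that is why it must separate the auxiliary measure $\nu_{\lambda_0}=(1-\lambda_0)\nu+\lambda_0\mu_0\notin\mathbb K$, normalize $\int F\,d\nu_{\lambda_0}=0$, and let convexity force $\int F\,d\nu<0$.

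However, the step on which your entire argument rests---that the $W_1$-closed convex set $\mathbb K$ is closed for $\sigma(\mathcal M,\mathcal C_\ell)$---is not established by what you write, and this is a genuine gap. First, the characterization recalled in Section \ref{misura} is a statement about convergent \emph{sequences}; two comparable topologies with the same convergent sequences need not coincide unless the coarser one is known to be sequential (e.g.\ metrizable), so the word ``Consequently'' asserts precisely what has to be proved. Second, Mazur's theorem does not apply: it concerns a locally convex \emph{vector} topology and the weak topology of the same dual pair on a vector space, whereas $W_1$ is only a metric on the convex set $\mathbb P$, which is not a linear subspace of $\mathcal M$, and $W_1$ is not the restriction of any vector topology on $\mathcal M$ whose dual you have identified as $\mathcal C_\ell$ (moreover, if the two topologies did coincide, Mazur would be superfluous). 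Third, a minor but real point: even relative closedness in $\mathbb P$ must be upgraded to closedness in $\mathcal M$, which requires observing that $\mathbb P$ itself is $\sigma(\mathcal M,\mathcal C_\ell)$-closed (true, since positivity and unit mass are weakly closed conditions). The statement you need is in fact correct---one can prove that the trace of $\sigma(\mathcal M,\mathcal C_\ell)$ on $\mathbb P$ is metrizable, e.g.\ by exhibiting a countable generating family of test functions via truncation of linear-growth functions, after which the sequential characterization does close the loop---but that is a substantive argument, not a consequence of what you cite, and it is exactly the content the paper outsources to the Wasserstein separation theorem of \cite{LOST}. As written, your proof is incomplete at this point; it becomes complete either by supplying that metrizability argument, or by invoking \cite{LOST} as the paper does, in which case the gap-free form of that theorem forces you to reinstate something like the paper's $\nu_{\lambda_0}$ device.
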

The proof of this lemma is based on  a separation result in Wasserstein spaces  that we take from \cite{LOST}.  We state it below with slight changes to adapt it to our  notation and setting.

\begin{Lemma}\label{preyounga}\cite[Theorem 2.9]{LOST} Let $\mathbb K$ be a closed convex subset of $\mathbb P$, and $\nu \not \in \mathbb K$. Then, there exists $F: T\G \to \R$ with linear growth such that
\[\int F \, d\mu > \int F \, d\nu \txt{for any $\mu \in \mathbb K$.}\]
\end{Lemma}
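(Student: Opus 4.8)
The plan is to deduce this separation statement from the Hahn--Banach separation theorem, applied in the ambient vector space of signed measures equipped with a weak topology whose continuous dual consists precisely of the functions with linear growth. First I would introduce the real vector space $\mathcal{M}$ of all signed Borel measures $m$ on $T\G$ with finite first moment, \ie $\int (1+q)\, d|m| < +\infty$, together with the space $C_\ell$ of continuous functions $F:T\G \to \R$ with linear growth, i.e.\ satisfying $|F(e,q)| \le a_e\, q + b_e$ for suitable $a_e, b_e \in \R$. The bilinear form $(F,m) \mapsto \int F \, dm$ is well defined on $C_\ell \times \mathcal{M}$ and separating in both arguments: since $C_\ell$ contains all bounded continuous functions and $T\G$ is Polish, two measures pairing equally with every $F \in C_\ell$ must coincide, and conversely a function pairing to zero against every Dirac mass $\de(e,q)$ vanishes identically. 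Hence $(\mathcal{M}, C_\ell)$ is a dual pair, and endowing $\mathcal{M}$ with the weak topology $\sigma(\mathcal{M}, C_\ell)$ makes it a locally convex topological vector space whose topological dual is exactly $C_\ell$.

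Next I would check that $\mathbb{K}$ is convex and $\sigma(\mathcal{M}, C_\ell)$--closed in $\mathcal{M}$. Convexity is part of the hypothesis. For closedness, the key point, which is precisely the dual description of the first Wasserstein distance recalled before Theorem \ref{ari}, is that the trace of $\sigma(\mathcal{M}, C_\ell)$ on $\mathbb{P}$ coincides with the Wasserstein topology, since convergence of measures in $\mathbb{P}$ is tested exactly against the functions of $C_\ell$. Thus $\mathbb{K}$, being Wasserstein--closed, is closed in the trace topology on $\mathbb{P}$. Moreover $\mathbb{P}$ itself is $\sigma(\mathcal{M}, C_\ell)$--closed in $\mathcal{M}$: the total mass $m \mapsto \int 1 \, dm$ and the evaluations $m \mapsto \int F \, dm$ against non-negative $F \in C_\ell$ are $\sigma(\mathcal{M}, C_\ell)$--continuous, so both the normalization $\int 1 \, dm = 1$ and positivity are preserved under limits. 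Combining these facts, $\mathbb{K}$ is $\sigma(\mathcal{M}, C_\ell)$--closed in $\mathcal{M}$.

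Finally I would invoke the strong form of the Hahn--Banach separation theorem in the locally convex space $\big(\mathcal{M}, \sigma(\mathcal{M}, C_\ell)\big)$: the singleton $\{\nu\}$ is compact and convex, $\mathbb{K}$ is closed and convex, and the two are disjoint because $\nu \notin \mathbb{K}$. Hence there exist a $\sigma(\mathcal{M}, C_\ell)$--continuous linear functional $\Lambda$ and reals $\gamma_1 < \gamma_2$ with $\Lambda(\nu) \le \gamma_1 < \gamma_2 \le \Lambda(\mu)$ for every $\mu \in \mathbb{K}$. By the identification of the dual, $\Lambda(\cdot) = \int F \, d(\cdot)$ for some $F \in C_\ell$, and therefore $\int F \, d\nu < \int F \, d\mu$ for all $\mu \in \mathbb{K}$, which is the assertion.

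The main obstacle is the closedness step: one must be careful that Wasserstein--closedness of $\mathbb{K}$ transfers to closedness for the weak topology $\sigma(\mathcal{M}, C_\ell)$ of the ambient vector space, rather than merely for the metric topology on $\mathbb{P}$. This rests on identifying the first Wasserstein topology on $\mathbb{P}$ with the trace of $\sigma(\mathcal{M}, C_\ell)$, which is exactly the dual characterization of $W_1$--convergence stated in the paper, together with the weak-closedness of $\mathbb{P}$ inside $\mathcal{M}$. Once this is secured, the separation step is routine, the strictness of the inequality being guaranteed by the compactness of $\{\nu\}$ in the strong separation theorem.
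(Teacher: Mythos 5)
The paper does not actually prove this lemma: it is imported, with only notational changes, from \cite[Theorem 2.9]{LOST}, so there is no internal proof to compare against. Your dual-pair Hahn--Banach argument is the natural route and is essentially the strategy of that reference: the pairing of signed measures with finite first moment against continuous functions of linear growth is a separating dual pair, the topological dual of $\big(\mathcal M, \sigma(\mathcal M, C_\ell)\big)$ is exactly $C_\ell$, the set $\mathbb P$ is weakly closed in $\mathcal M$ (normalization and positivity are intersections of weakly closed hyperplanes and half-spaces), and strong separation of the compact convex singleton $\{\nu\}$ from the closed convex $\mathbb K$ delivers the strict inequality. All of these steps are fine as you state them.

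The one step where your justification, as written, does not suffice is precisely the one you flag: closedness of $\mathbb K$ for $\sigma(\mathcal M, C_\ell)$. The duality statement recalled in the paper before Theorem \ref{ari} characterizes $W_1$-\emph{convergent sequences}, and coincidence of convergent sequences does not by itself give coincidence of topologies: a priori the trace of $\sigma(\mathcal M, C_\ell)$ on $\mathbb P$ could be strictly coarser than the Wasserstein topology while having the same convergent sequences, in which case a Wasserstein-closed $\mathbb K$ would only be \emph{sequentially} weakly closed --- not enough for the Hahn--Banach separation theorem, which requires genuine topological closedness in the (non-metrizable) ambient space. To repair this, show that the trace topology is itself metrizable, after which equality of convergent sequences upgrades to equality of topologies. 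This can be done concretely in the present setting: the map $\mu \longmapsto (1+q)\,\mu$ is a bijection from $\mathbb P$ onto a subset of the finite positive Borel measures on $T\G$ which intertwines the pairing against $C_\ell$ with the pairing against $C_b(T\G)$ (via $F \mapsto F/(1+q)$), and the weak topology induced by $C_b$ on the cone of finite positive measures over a Polish space is metrizable; here $T\G$ is even just finitely many half-lines glued at their origins, so the verification is elementary. Alternatively, one may note that this topological identification is part of what \cite{LOST} establishes and cite it. With that supplement your proof is complete.
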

\smallskip

\begin{proof}[\bf{Proof of Lemma \ref{younga}:}] \; Given $\nu \not\in \mathbb K$, we fix $\mu_0 \in \mathbb K$ and define
\[\nu_\la= (1-\la) \, \nu + \la \, \mu_0 \txt{for $\la \in [0,1]$.}\]
Since $\mathbb K$ is closed, there exists $\la_0 \in (0,1)$ with $\nu_{\la_0} \not \in \mathbb K$. We denote by $F$ a  function satisfying  the statement of Lemma \ref{preyounga} with respect to $\nu_{\la_0}$; we can in addition assume, without loosing generality, that
\begin{equation}\label{younga01}
 \int F \, d\nu_{\la_0}=0.
\end{equation}
Therefore  $F \in C^+$ and
\begin{equation}\label{younga02}
 \int F \, d\mu > 0 \txt{for any $\mu \in \mathbb K$}
\end{equation}
It follows from the definition of $\nu_{\la_0}$, \eqref{younga01}, \eqref{younga02}  that
\[\int F \, d\nu <0.\]
Summing up, we have found that for any $\nu \not \in \mathbb K$, there exists $F \in C^+$ whose  integral with respect to $\nu$ is strictly negative. This proves the assertion.
\end{proof}

\smallskip

\begin{Lemma} The closure in $\mathbb P$ of the space of closed occupation measures is convex.
\end{Lemma}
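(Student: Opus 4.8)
The plan is to prove that every convex combination of two closed occupation measures lies in the closure $\overline{\mathcal S}$ of the set $\mathcal S$ of closed occupation measures; convexity of $\overline{\mathcal S}$ itself will then follow by a soft limiting argument. Indeed, suppose I can show that for all $\mu_1,\mu_2\in\mathcal S$ and all $\la\in[0,1]$ one has $\la\mu_1+(1-\la)\mu_2\in\overline{\mathcal S}$. Then, given $\mu,\nu\in\overline{\mathcal S}$ with approximating sequences $\mu_k,\nu_k\in\mathcal S$, each $\la\mu_k+(1-\la)\nu_k$ lies in $\overline{\mathcal S}$ and converges to $\la\mu+(1-\la)\nu$, which is therefore in the closed set $\overline{\mathcal S}$. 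Moreover, since $\la\mapsto\la\mu_1+(1-\la)\mu_2$ is continuous and $\overline{\mathcal S}$ is closed, the set of admissible $\la$ is closed, so it will be enough to verify the claim for $\la$ in a dense subset of $[0,1]$.

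The construction I have in mind is a concatenation argument. I would write $\mu_i=\mu_{\xi_i}$ for parametrized cycles $\xi_i$ based at vertices $x_i$, $i=1,2$, and recall $T_{\xi_i}=\sum_j T^{(i)}_j$. By connectedness {\bf (G2)} I may fix a path $\gamma$ from $x_1$ to $x_2$; I parametrize $\gamma$ and its reverse $-\gamma$ with unit speed on each edge, and let $\tau>0$ be the fixed total time of the parametrization of the concatenation $(\gamma,-\gamma)$. For positive integers $p,r,n$, I then form the closed path obtained by concatenating $np$ copies of $\xi_1$, then $\gamma$, then $nr$ copies of $\xi_2$, then $-\gamma$; call $\Xi_n$ the resulting parametrized cycle, the parametrization being inherited from the above (one checks it complies with Definition \ref{para}, taking the usual care at the junctions and with possible equilibrium pieces described in Proposition \ref{capizzi}). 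Since $\Xi_n$ is a cycle, $\mu_{\Xi_n}$ is a closed occupation measure, i.e. $\mu_{\Xi_n}\in\mathcal S$.

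Next I would pass to the limit. Writing $T_n=np\,T_{\xi_1}+nr\,T_{\xi_2}+\tau$ for the total time of $\Xi_n$, formula \eqref{simon} gives
\[ \mu_{\Xi_n}=\frac{np\,T_{\xi_1}}{T_n}\,\mu_1+\frac{nr\,T_{\xi_2}}{T_n}\,\mu_2+\frac{1}{T_n}\,\theta, \]
where $\theta$ is the fixed finite measure coming from the edges of $\gamma,-\gamma$, of total mass $\tau$ and supported on a fixed finite subset of $T\G$. Testing against continuous functions of linear growth (the duality characterizing Wasserstein convergence), the last term tends to $0$ as $n\to+\infty$, since its mass $\tau/T_n\to0$ and its support is fixed, while the two coefficients converge to $\la_{p,r}:=\frac{p\,T_{\xi_1}}{p\,T_{\xi_1}+r\,T_{\xi_2}}$ and $1-\la_{p,r}$. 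Hence $\mu_{\Xi_n}\to\la_{p,r}\mu_1+(1-\la_{p,r})\mu_2\in\overline{\mathcal S}$. As $p,r$ range over the positive integers, $p/r$ ranges over the positive rationals, so the values $\la_{p,r}$ are dense in $(0,1)$ (here $T_{\xi_1},T_{\xi_2}>0$ is used); together with the reduction of the first paragraph this yields $\la\mu_1+(1-\la)\mu_2\in\overline{\mathcal S}$ for every $\la\in[0,1]$, and the lemma follows.

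The main obstacle is precisely the connecting path, which is unavoidable because $\xi_1$ and $\xi_2$ are based at different vertices in general. The key idea that makes the argument work is to traverse it once, at finite speed, while repeating the two cycles $n$ times: this forces its relative contribution $\tau/T_n$ to vanish, so that the limit is a genuine convex combination of $\mu_1$ and $\mu_2$ untainted by the connector. (Traversing the connector at high speed instead would keep an $O(1)$ momentum term and fail.) A secondary, but only bookkeeping, difficulty is that the construction directly realizes the ratios $\la_{p,r}$ rather than an arbitrary $\la$, which is why the density-plus-closedness step is needed to reach all of $[0,1]$.
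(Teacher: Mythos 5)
Your proof is correct and follows essentially the same route as the paper, which sketches exactly this argument (connect the cycles by a path, repeat the cycles $n$ times so that the connector's contribution vanishes in the limit) and defers the formal details to an adaptation of \cite[Lemma 30]{Bernard}. Your write-up in fact fills in the bookkeeping the paper leaves implicit, namely the tuning of the repetition numbers $p,r$ to realize a dense set of coefficients $\la_{p,r}$ and the closedness argument that upgrades this to all $\la\in[0,1]$.
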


The fact that the closure of the space of closed occupation measures is convex,  stems from the property  that a closed occupation measure stays unchanged under any finite repetition of the corresponding cycle. Therefore, we can connect a finite number of cycles through simple paths in order to make a unique cycle. We can then  repeat $n$ times the cycles leaving unaffected the connecting paths and  obtain a sequence of closed occupation  measures indexed by $n$ converging, as $n \to + \infty$, to a measure which does not ``{see}'' the connecting simple paths and is a convex combination of the occupation measures corresponding to the cycles with repetitions.\\
A formal argument can be found \cite[Lemma 30]{Bernard} for  measures on the tangent bundle of a compact manifold. It can be adapted with minor modifications to our setting.\\

\smallskip

We can now prove the main result of this appendix.

\begin{proof} [{\bf Proof of Theorem \ref{ari}:}]
In view of Lemma \ref{younga}, it is enough to show that if a continuous function $F$  with linear growth in $T\G$ satisfies
\begin{equation}\label{ari1}
 \int F \, d\mu \geq 0  \txt{for any closed occupation measure $\mu$,}
\end{equation}
then it also satisfies
\begin{equation}\label{ari2}
 \int F \, d\nu \geq 0  \txt{for any  measure $\nu \in \bM$.}
\end{equation}
Let $F$ satisfy  \eqref{ari1}. By integration  with respect to the closed occupation measures $\de(e,0)$, for any $e\in\EE$, we get
\[F(e,0) \geq 0 \txt{for any $e \in \EE$.}\]
Thanks to the above inequality, we can modify $F$ in $[0,1/n] \cup [n,+\infty) \subset \R^+_e$, for any $e\in \EE$,  constructing a sequence of  continuous  functions $F_n$  defined on $T\G$  such that
\begin{equation}\label{younga1}
  F_n(e,0) > 0,  \quad F_n(e,\cdot) \; \hbox{has superlinear growth at $+ \infty$} \txt{for any $e \in \EE$}
\end{equation}
and in such a way that  for any $n$, for each $e \in \EE$, $q \geq 0$
\begin{eqnarray}
  F_{n+1}(e,q) & \leq& F_n(e,q) \label{younga2}\\
  F_n(e,q) &\geq & F(e,q) \label{younga21} \\
  F_n(e,q) & \to & F(e,q) \txt{as $n \to + \infty$.} \label{younga3}
\end{eqnarray}
We define
\[G_n(e,p) := \max_{q \geq 0} \big( p \, q -F_n(e,q)\big);\]
the function $G_n(e,\cdot)$ is finite by the superlinear growth of $F_n$, convex and superlinear at $+ \infty$; in addition $G_n(e,\cdot)$ is increasing in $p$ and by \eqref{younga1}
\[\inf_{p\in \R} G_n(e,p)= \lim_{p \to - \infty}G_n(e,p)= - F_n(e,0) <0. \]
Therefore, the value $0$  is attained  by $G_n(e,\cdot)$ and is above the infimum.  We denote by $\varphi^n_e$, for any $e\in \EE$,  the unique element such that
\[G_n(e,\varphi^n_e)=0.\]
The quantity  $\varphi^n_e$   must be understood as an intrinsic length of the edge $e$ related to the Hamiltonian $G_n$ and the value $0$.  We have
\begin{equation}\label{younga32}
 \varphi^n_{e} \, q \leq F_n(e,q) \txt{for any $q \geq 0$}
\end{equation}
and there exists   $q_e >0$  with
\begin{equation}\label{younga31}
  0= G_n(e,\varphi_{e}) = \varphi^n_{e} \, q_e- F_n(e,q_e).
\end{equation}
We consider the discrete  Hamilton--Jacobi equation on $\G$
\begin{equation}\label{younga4}
 \max_{-e \in  \EE_x} G_n(e,\langle du,  e\rangle) =0 \txt{for $x \in \VV$.}
\end{equation}
We know from Proposition \ref{sottosola} {\bf (i)} that in  order \eqref{younga4} to have subsolutions it is necessary and sufficient that for any cycle $\xi=(e_i)_{i=1}^M$ in $\G$ the intrinsic length
\[\varphi^n(\xi) := \sum_{i=1}^M \varphi^n_{e_i} \geq 0.\]
We deduce from  \eqref{younga31} that
\begin{equation}\label{younga5}
  \varphi^n_{e_i} = \frac 1{q_i} \, F_n(e_i,q_i),
\end{equation}
where $q_i:=q_{e_i}$ (see \eqref{younga32}, \eqref{younga31}). We consider the parametrized version of $\xi$ given by $(e_i,q_i,1/q_i)_{i=1}^M$ and denote by $\mu_\xi$ the corresponding closed occupation measure. We have  by \eqref{younga21} and the assumption that
\[0 \leq \int F_n \, d \mu_\xi= \frac 1{\sum_{i=1}^M \frac 1{q_i}} \, \sum_{i=1}^M \frac 1{q_i} \, F_n(e_i,q_i),\]
which finally implies, using  \eqref{younga5}, that $\varphi^n(\xi) \geq 0$.
If $u: \VV \to \R$ is a subsolution  of \eqref{younga4}, we have
\begin{equation}\label{younga10}
 \langle du, e \rangle \leq \varphi^n_e \txt{for any $e \in \EE$.}
\end{equation}
Let $\nu = \sum_{e\in \EE} \la_e \, \nu_e$ be a closed  measure on $T\G$,  then by \eqref{younga32}, \eqref{younga10}
\begin{eqnarray*}
  0 &=& \sum_{e\in \EE} \la_e \, \int q \, \langle du, e \rangle  d \nu_e    \\
   &\leq& \sum_{e\in \EE} \la_e \, \int \varphi^n_e \, q  \, d\nu_e \\
   &\leq&  \sum_{e\in \EE} \la_e \, \int  F_n(e,q) \, d \nu_e \; =\; \int F_n \, d\nu.
\end{eqnarray*}
Taking into account \eqref{younga2}, \eqref{younga3} and passing to the limit as $n \to + \infty$ in the above inequality, we obtain
\[\int F \, d\nu \geq 0,\]
which shows that $F$ satisfies \eqref{ari2}. This concludes the proof.
\end{proof}

\color{black}

\bigskip

\vspace{10 pt}

\end{document}